\documentstyle[11pt,amssymb,amsmath,amscd,amsbsy,amsfonts,amsthm,color]{article}

\input xy
\xyoption{all}

\pagestyle{plain}
\textwidth=17.5cm
\oddsidemargin=-1cm
\evensidemargin=-1cm
\topmargin=-1cm
\textheight=23cm

\newtheorem{thm}{Theorem}[section]
\newtheorem{lem}[thm]{Lemma}
\newtheorem{prop}[thm]{Proposition}
\newtheorem{cor}[thm]{Corollary}

\DeclareMathOperator{\Z}{\mathbb{Z}}
\DeclareMathOperator{\HF}{\mathbb{H}}
\DeclareMathOperator{\N}{\mathit{N_>}}
\DeclareMathOperator{\n}{\mathit{N_<}}
\DeclareMathOperator{\M}{\mathit{\widehat{M_{\alpha}}}}
\DeclareMathOperator{\Nn}{\mathit{N_>^{\otimes n}}}
\DeclareMathOperator{\Nm}{\mathit{N_>^{\otimes m}}}
\DeclareMathOperator{\Nmp}{\mathit{N_>^{\otimes m+1}}}
\DeclareMathOperator{\Nmi}{\mathit{N_>^{\otimes m+i}}}
\DeclareMathOperator{\Nmin}{\mathit{N_>^{\otimes m+i_{2n}}}}
\DeclareMathOperator{\Nnm}{\mathit{N_>^{\otimes n-1}}}
\DeclareMathOperator{\Nodd}{\mathit{N_>^{\otimes m+ \sum_{l \: odd} i_l}}}
\DeclareMathOperator{\nodd}{\mathit{N_>^{\otimes \sum_{l \: odd} i_l}}}
\DeclareMathOperator{\Ni}{\mathit{N_>^{\otimes i_{2n}}}}

\newcommand{\la}{\langle}
\newcommand{\ra}{\rangle}
\newcommand{\lva}{\langle\!\langle}  
\newcommand{\rva}{\rangle\!\rangle}   

\title{\textsc{Subtle characteristic classes and Hermitian forms}}
\author{Fabio Tanania}
\date{}

\begin{document}

\maketitle

\begin{abstract}
Following \cite{SV}, we compute the motivic cohomology ring of the Nisnevich classifying space of the unitary group associated to the standard split hermitian form of a quadratic extension. This provides us with subtle characteristic classes which take value in the motivic cohomology of the \v{C}ech simplicial scheme associated to a hermitian form. Comparing these new classes with subtle Stiefel-Whitney classes arising in the orthogonal case, we obtain relations among the latter ones holding in the motivic cohomology of the \v{C}ech simplicial scheme associated to a quadratic form divisible by a 1-fold Pfister form. Moreover, we present a description of the motive of the torsor corresponding to a hermitian form in terms of its subtle characteristic classes.
\end{abstract}

\section{Introduction}

The study of homotopy theory in the algebro-geometric world, which was initiated by Morel and Voevodsky in \cite{MV}, has led to very deep results such as the affirmation of \textit{Milnor Conjecture} (\cite{V2}). As a result, much attention has been devoted in the last years to transferring topological techniques into algebraic geometry.

For example, the study of classifying spaces $BG$ and their respective characteristic classes in different cohomology theories have been extremely useful in topology to approach the classification of principal $G$-bundles. In the same way, it is possible to study $G$-torsors in algebraic geometry by focusing on classifying spaces and characteristic classes in the motivic homotopic environment. We notice that here there are two different, but highly related, classifying spaces, namely the \textit{Nisnevich} and the \textit{\'etale}. For non special algebraic groups they have in general different cohomology rings and, consequently, they produce different characteristic classes. Although $G$-torsors are classified by the \'etale classifying space, it is undoubted that the Nisnevich version provides its own advantages.

Good evidence of this is provided by \cite{SV}, where torsors, Nisnevich classifying spaces and a general homotopic framework to deal with them have been deeply studied by the authors. In particular, they focus on $BO_n$, the Nisnevich classifying space of the orthogonal group associated to the standard split quadratic form, which allows to study $O_n$-torsors over the point that are in one-to-one correspondence with quadratic forms. They prove that the motivic cohomology ring with $\Z/2$-coefficients of $BO_n$ is a polynomial algebra over the motivic cohomology of the base field generated by some elements that they call \textit{subtle Stiefel-Whitney classes}. These classes are very informative, for example they are able to see if a quadratic form is in a power of the fundamental ideal of the Witt ring or not. Moreover, they are related to the $J$-invariant of quadrics introduced in \cite{V}. 

In this work we will focus instead on the unitary group $U_n(E/k)$ associated to the standard split hermitian form of a quadratic extension $E/k$. In particular, we will compute the motivic cohomology with $\Z/2$-coefficients of its Nisnevich classifying space. As in the orthogonal case, this will provide us with subtle characteristic classes which allow to approach the classification of $U_n(E/k)$-torsors over the point that are nothing else but $n$-dimensional hermitian forms of $E/k$, which in turn are in one-to-one correspondence with $2n$-dimensional quadratic forms over $k$ divisible by the norm form of the quadratic extension considered. In \cite{SV}, the computation of the motivic cohomology of $BO_n$ is conducted inductively by using fibrations with motivically Tate fibers. In our situation, new features will appear. In particular, the fibrations in the unitary case, similar to those considered in the orthogonal one, will have reduced fibers which (depending on parity) are not motivically Tate but, anyway, invertible, which will still allow the computation. These invertible motives are, not surprisingly, closely related to the \textit{Rost motive} of our quadratic extension. As a consequence, we obtain that, unlike the orthogonal case, the classifying space of the unitary group is not cellular, but it becomes one once tensored with the \v{C}ech simplicial scheme of the Pfister form of the quadratic extension. Related to this, we observe an interesting interaction between invertible objects and idempotents in Voevodsky category. It is manifested, in particular, by the fact that the cohomology of the tensor product of $BU_n(E/k)$ with the \v{C}ech simplicial scheme above mentioned happens to be a direct limit of the cohomology of $BU_n(E/k)$ tensored with powers of an invertible motive. We also note that, although studying hermitian forms is the same as studying quadratic forms divisible by a Pfister form, the understanding of the unitary case allows to trace back information from the hermitian world to the quadratic one. In particular, from the computation of the motivic cohomology of $BU_n(E/k)$ we get relations among subtle Stiefel-Whitney classes in the cohomology of the \v{C}ech simplicial scheme of the respective quadratic form divisible by a binary Pfister form. In this sense, for this special class of quadratic forms, the cohomology of $BU_n(E/k)$ is much closer to that of the \v{C}ech simplicial scheme of the torsor than the cohomology of $BO_{2n}$. 

We will now summarise the content of the sections of this text. First of all, in section $2$ we present a few notations which will be followed throughout the paper. Section $3$ is devoted to recalling some preliminary definitions and results about the category of motives over a simplicial base studied in \cite{V1}. Moreover, following \cite{SV}, we will prove some statements regarding fibrations with motivically invertible reduced fibers. In section $4$ we will deal with Nisnevich classifying spaces and some of their main features.  A study of the \v{C}ech simplicial scheme, the Rost motive of a quadratic extension and, especially, of some closely related invertible motives is presented in section $5$. The main result of the paper, namely the computation of the motivic cohomology ring of $BU_n(E/k)$ is object of section $6$. In section $7$, we will compare the classifying space of the unitary group of the split hermitian form with that of the orthogonal group of the corresponding quadratic form. As a consequence, we will present the cohomology of the first as a quotient of the second. In particular, we will relate our subtle classes to subtle Stiefel-Whitney classes arising in the orthogonal case. Finally, in section $8$, in the same fashion of \cite{SV}, we find some applications to hermitian forms. For example, we deduce relations among subtle classes in the motivic cohomology of the respective \v{C}ech simplicial scheme, see that these subtle classes distinguish the triviality of the torsor and find an expression of the motive of the torsor associated to a hermitian form.\\

\textbf{Acknowledgements.} I wish to express my sincere gratitude to my PhD supervisor Alexander Vishik for his precious help and constant encouragement throughout the preparation of this work. Moreover, I would like to thank the referee for very useful comments which helped to correct some mistakes and to improve the exposition. 

\section{Notation}

Throughout this paper we will work over a field $k$ of characteristic different from $2$.

The main categories we will consider are the category of motivic spaces $Spc(k)=\Delta^{op}Shv_{Nis}(Sm/k)$, the simplicial homotopy category ${\mathcal H}_s(k)$  constructed by Morel and Voevodsky in \cite{MV} and the triangulated category of effective motives ${\mathcal DM}^{-}_{eff}(k)$ constructed by Voevodsky in \cite{V3}. 

All motivic cohomology will be with $\Z/2$-coefficients. Moreover, we will denote by $H$ the motivic cohomology of $Spec(k)$. From a result by Voevodsky (\cite{V2}), we know that $H=K^M(k)/2[\tau]$, where $\tau$ is the generator of $H^{0,1}=\Z/2$.

Given a quadratic extension $E=k(\sqrt{\alpha})$ and an $n$-dimensional $E$-vector space $V$, an $n$-dimensional hermitian form is a map $h:V \times V \rightarrow E$ which is $E$-linear in the first factor and such that $h(v,w)=\sigma({h(w,v)})$ (where $\sigma$ is the generator of $Gal(E/k)$). It follows immediately from the definition that the diagonal part of a hermitian form takes values in $k$ and is a quadratic form. We will denote by $\widetilde{h}$ this $2n$-dimensional quadratic form over $k$ defined by $\widetilde{h}(v)=h(v,v)$ for any $v \in V$ considered as a $2n$-dimensional $k$-vector space. Moreover, notice that the quadratic form $\widetilde{h}$ just defined is divisible by $\lva \alpha \rva$, the $1$-fold Pfister form associated to $\alpha$. Indeed, more is true, namely any quadratic form over $k$ divisible by $\lva \alpha \rva$ is associated to some hermitian form, and the correspondence is bijective. In fact, given two $n$-dimensional hermitian forms $h$ and $h'$, we have that $h \cong h'$ if and only if $\widetilde{h} \cong \widetilde{h'}$ (\cite[Corollary 9.2]{Ka}).

We will express by $q_n$ the standard split quadratic form $\perp_{i=1}^n \la (-1)^{i-1} \ra$ and by $\HF$ the hyperbolic form $\la 1,-1 \ra$. Similarly, we will denote by $h_n$ the standard split hermitian form $\perp_{i=1}^n \la (-1)^{i-1} \ra$. Notice, in particular, that $\widetilde{h}_n=\lva \alpha \rva \otimes q_n$. By $U_n(E/k)$ we will mean the unitary group of invertible $n \times n$-matrices over $E$ that preserve the standard split hermitian form $h_n$. Notice that this is a linear algebraic group over $k$.

\section{Motives over a simplicial base}

We start this section by recalling a few definitions and some results about the category of motives over a simplicial base studied by Voevodsky in \cite{V1}.

Let $Y_{\bullet}$ be a smooth simplicial scheme over $k$ and $R$ a commutative ring with unity. As in \cite{V1}, let $Sm/Y_{\bullet}$ be the category whose objects are pairs $(U,j)$, where $j$ is a non-negative integer and $U$ is a smooth scheme over $Y_j$, and whose morphisms from $(U,j)$ to $(V,i)$ are pairs $(f,\theta)$, where $\theta:[i] \rightarrow [j]$ is a simplicial map, such that the following diagram
$$
\xymatrix{
U \ar@{->}[r]^{f} \ar@{->}[d] & V \ar@{->}[d]\\
Y_j \ar@{->}[r]_{Y_{\theta}} & Y_i
}
$$
commutes.

Then, we will denote by $Spc(Y_{\bullet})=\Delta^{op}Shv_{Nis}(Sm/Y_{\bullet})$ the category of motivic spaces over $Y_{\bullet}$, obtained by considering simplicial Nisnevich sheaves on $Sm/Y_{\bullet}$.

In $\cite{V1}$ the category of motives over $Y_{\bullet}$ with $R$ coefficients was constructed. We will denote this category by ${\mathcal DM}_{eff}^-(Y_{\bullet},R)$.

This category comes endowed with a sequence of functors 
$$r_i^*:{\mathcal DM}_{eff}^-(Y_{\bullet},R) \rightarrow {\mathcal DM}_{eff}^-(Y_i,R)$$
For simplicity we will write $N_i$ for $r_i^*(N)$. We recall that, for any morphism $p:Y_{\bullet} \rightarrow Y'_{\bullet}$ of smooth simplicial schemes, there is an adjoint pair
\begin{align*}
{\mathcal DM}_{eff}^-&(Y_{\bullet},R)\\
Lp^* \uparrow & \downarrow Rp_*\\
{\mathcal DM}_{eff}^-&(Y'_{\bullet},R)
\end{align*}
If moreover $p$ is smooth, then we also have the adjoint pair
\begin{align*}
{\mathcal DM}_{eff}^-&(Y_{\bullet},R)\\
Lp_{\#} \downarrow & \uparrow p^*\\
{\mathcal DM}_{eff}^-&(Y'_{\bullet},R)
\end{align*}
Besides, we will denote by $CC(Y_{\bullet})$ the simplicial set obtained by applying to $Y_{\bullet}$ the functor $CC$ which commutes with coproducts and sends any connected scheme to the point.

For any smooth simplicial scheme $Y_{\bullet}$ over $k$ we can consider the projection to the base $Y_{\bullet} \rightarrow Spec(k)$. This morphism induces the triangulated functor 
$$c^*:{\mathcal DM}_{eff}^-(k,R) \rightarrow {\mathcal DM}_{eff}^-(Y_{\bullet},R)$$
We will denote by $M_{Y_\bullet}$ the motive $c^*(M)$ for any $M \in {\mathcal DM}_{eff}^-(k,R)$.

We report below two results about the category of motives over a simplicial base which will be essential throughout this paper in order to deal with fibrations with motivically invertible reduced fibers. First of all, in $\cite{SV}$ it is proven the following proposition.

\begin{prop}\label{SV}
\cite[Proposition 3.1.5]{SV} Suppose that $H^1(CC(Y_{\bullet}),R^{\times})=0$. Let $T$ be the unit in ${\mathcal DM}_{eff}^-(k,R)$ and $N \in {\mathcal DM}_{eff}^-(Y_{\bullet},R)$ be such a motive that its graded components $N_i \in {\mathcal DM}_{eff}^-(Y_i,R)$ are isomorphic to $T_{Y_i}$ and all the structure maps $N_{\theta}:LY_{\theta}^*(N_i) \rightarrow N_j$ are isomorphisms for any simplicial map $\theta:[i] \rightarrow [j]$. Then $N$ is isomorphic to $T_{Y_{\bullet}}$.
\end{prop}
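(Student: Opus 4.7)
The plan is to produce a global isomorphism $N \cong T_{Y_{\bullet}}$ by picking isomorphisms at each simplicial level and correcting the resulting discrepancy via the vanishing assumption. First I would choose, for each $i \geq 0$, some isomorphism $\phi_i: N_i \to T_{Y_i}$ guaranteed by the hypothesis. Since both the given structure maps $N_{\theta}$ and the canonical structure isomorphisms $T_{\theta}$ of the constant motive $T_{Y_{\bullet}}$ are invertible, for every simplicial map $\theta: [i] \to [j]$ the composite
$$c_{\theta} := T_{\theta}^{-1} \circ \phi_j \circ N_{\theta} \circ LY_{\theta}^*(\phi_i)^{-1}$$
is an automorphism of $T_{Y_j}$. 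Because $\mathrm{End}(T_{Y_j}) = R^{\pi_0(Y_j)}$ in ${\mathcal DM}_{eff}^-(Y_j, R)$, the family $\{c_{\theta}\}$ defines a $1$-cochain on the simplicial set $CC(Y_{\bullet})$ with values in $R^{\times}$.

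Next I would verify that $c$ satisfies the cocycle condition, which amounts to a direct check using the functoriality of the two systems of structure maps under composition of simplicial operators (the relation $N_{\eta\theta} = N_{\eta} \circ LY_{\eta}^*(N_{\theta})$ together with the analogous one for $T$ produces the multiplicative identity matching the differential in the complex computing $H^*(CC(Y_{\bullet}), R^{\times})$). The vanishing assumption then lets me write $c$ as a coboundary, so there exist units $u_i \in \mathrm{Aut}(T_{Y_i}) = (R^{\times})^{\pi_0(Y_i)}$ satisfying $c_{\theta} = u_j \cdot LY_{\theta}^*(u_i)^{-1}$. Replacing each $\phi_i$ by $u_i^{-1} \circ \phi_i$ then makes every compatibility square $\phi_j \circ N_{\theta} = T_{\theta} \circ LY_{\theta}^*(\phi_i)$ commute strictly.

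Finally, I would invoke that such a family of level-wise isomorphisms, strictly compatible with the structure maps, assembles into a genuine morphism in ${\mathcal DM}_{eff}^-(Y_{\bullet}, R)$; since its components are isomorphisms, the resulting morphism is an isomorphism, giving $N \cong T_{Y_{\bullet}}$. The hard part I anticipate is this last assembly step: in the derived simplicial setting one must argue that for the unit motive, whose automorphism sheaves are essentially $1$-truncated with values in $R^{\times}$, the only obstruction to gluing level-wise data into a global morphism is precisely the $H^1$-class that has just been killed, so no further higher-coherence data intervenes. Tracing the obstruction theory carefully and identifying it with $H^1(CC(Y_{\bullet}), R^{\times})$ is the real content of the proposition; once this identification is in place the rest is bookkeeping.
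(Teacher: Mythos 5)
First, a point of reference: the paper does not prove this proposition at all --- it is quoted verbatim from \cite[Proposition 3.1.5]{SV} --- so there is no in-paper argument to compare yours against; your proposal has to stand on its own. Its first two stages are sound and are the natural strategy: since $\mathrm{Hom}_{{\mathcal DM}_{eff}^-(Y_p,R)}(T_{Y_p},T_{Y_p})=H^{0,0}(Y_p,R)=R^{\pi_0(Y_p)}$, the discrepancies $c_{\theta}$ of a chosen family of levelwise trivializations form (after normalizing away the codegeneracies) a $1$-cocycle of the cosimplicial abelian group $p\mapsto (R^{\times})^{\pi_0(Y_p)}$, whose $H^1$ is exactly $H^1(CC(Y_{\bullet}),R^{\times})$; the hypothesis then lets you rescale the $\phi_i$ so that all compatibility squares commute strictly. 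That part is correct bookkeeping.

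The genuine gap is the final assembly step, which you yourself label ``the real content'' and then do not carry out. In Voevodsky's ${\mathcal DM}_{eff}^-(Y_{\bullet},R)$ a morphism is not the same thing as a strictly compatible family of morphisms on the graded components, so you must actually produce a global map and control the higher obstructions rather than assert that none intervene. Concretely, one needs: (i) the skeletal filtration of $Y_{\bullet}$, giving for bounded-above objects a tower (or spectral sequence) with $E_1^{p,q}=\mathrm{Hom}_{{\mathcal DM}_{eff}^-(Y_p,R)}(N_p,T_{Y_p}[q])$ computing $\mathrm{Hom}_{{\mathcal DM}_{eff}^-(Y_{\bullet},R)}(N,T_{Y_{\bullet}}[p+q])$; (ii) the vanishing $\mathrm{Hom}(T_{Y_p},T_{Y_p}[q])=H^{q,0}(Y_p,R)=0$ for $q<0$ (smooth schemes have no motivic cohomology in those bidegrees), which is what kills all the differentials/obstructions against lifting your strictly compatible family to an honest morphism $f\colon N\rightarrow T_{Y_{\bullet}}$ --- note this vanishing is an extra input, not a consequence of the $H^1$ hypothesis, which only removes the twist; and (iii) joint conservativity of the restrictions $r_p^*$ (again via the skeletal tower), applied to $Cone(f)$, to conclude that a morphism with invertible components is invertible. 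Your phrase about the automorphisms being ``essentially $1$-truncated with values in $R^{\times}$'' is the right intuition, but identifying the obstruction theory with $H^*(CC(Y_{\bullet}),-)$ and checking that no further coherence data enters is precisely the proof of the proposition; as written, the proposal is an outline of that proof, not the proof itself.
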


From the previous proposition we immediately deduce the following corollary which is a generalisation for all invertible motives.

\begin{cor}\label{SVI}
Suppose that $H^1(CC(Y_{\bullet}),R^{\times})=0$. Let $M$ be an invertible motive in ${\mathcal DM}_{eff}^-(k,R)$ and $N \in {\mathcal DM}_{eff}^-(Y_{\bullet},R)$ be such a motive that its graded components $N_i \in {\mathcal DM}_{eff}^-(Y_i,R)$ are isomorphic to $M_{Y_i}$ and all the structure maps $N_{\theta}:LY_{\theta}^*(N_i) \rightarrow N_j$ are isomorphisms for any simplicial map $\theta:[i] \rightarrow [j]$. Then $N$ is isomorphic to $M_{Y_{\bullet}}$.
\end{cor}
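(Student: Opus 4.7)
The plan is to reduce the general invertible case to Proposition \ref{SV} by tensoring with an inverse of $M$. Since $M$ is invertible in $\mathcal{DM}^-_{eff}(k,R)$, choose $L$ with $M \otimes L \cong T$. Form the motive $N' := N \otimes L_{Y_\bullet}$ in $\mathcal{DM}^-_{eff}(Y_\bullet,R)$, where $L_{Y_\bullet} = c^*(L)$ in the notation already introduced.

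I would then verify that $N'$ meets the hypotheses of Proposition \ref{SV}. Because the pullback functors $r_i^*$ and $LY_\theta^*$ are symmetric monoidal, the graded pieces satisfy $N'_i \cong M_{Y_i} \otimes L_{Y_i} \cong (M \otimes L)_{Y_i} \cong T_{Y_i}$, and the structure maps of $N'$ are isomorphisms since they factor as the tensor of the assumed structure isomorphisms of $N$ with the tautological structure isomorphisms $LY_\theta^*(L_{Y_i}) \cong L_{Y_j}$ coming from the fact that $L_{Y_\bullet}$ is pulled back via $c^*$ from the point. Applying Proposition \ref{SV} to $N'$ produces $N' \cong T_{Y_\bullet}$.

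Finally, tensoring both sides of this isomorphism with $M_{Y_\bullet}$ and using again the monoidality of $c^*$ gives
$$N \cong N \otimes T_{Y_\bullet} \cong N \otimes (L \otimes M)_{Y_\bullet} \cong N' \otimes M_{Y_\bullet} \cong T_{Y_\bullet} \otimes M_{Y_\bullet} \cong M_{Y_\bullet},$$
which is the desired conclusion. The only subtle point is the symmetric monoidality of the various pullback functors (so that tensor products and inverses are preserved by restriction and by $c^*$); this is a standard feature of Voevodsky's construction in \cite{V1}, and once it is invoked the argument is entirely formal. No genuine obstacle arises beyond Proposition \ref{SV} itself.
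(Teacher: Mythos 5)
Your argument is correct and is essentially the paper's own proof: the paper likewise tensors $N$ with $M_{Y_\bullet}^{-1}$ (your $L_{Y_\bullet}$), checks that the graded components become $T_{Y_i}$ and that the structure maps remain isomorphisms, and then applies Proposition \ref{SV} before tensoring back with $M_{Y_\bullet}$. No substantive difference beyond notation and the level of explicitness about monoidality of the pullback functors.
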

\begin{proof}
Consider the motive $N \otimes M_{Y_{\bullet}}^{-1}$ in ${\mathcal DM}_{eff}^-(Y_{\bullet},R)$. We notice that 
$$(N \otimes M_{Y_{\bullet}}^{-1})_i \cong N_i \otimes M_{Y_i}^{-1} \cong M_{Y_i} \otimes M_{Y_i}^{-1} \cong T_{Y_i}$$ 
and, for any simplicial map $\theta:[i] \rightarrow [j]$, the morphisms 
$$LY_{\theta}^*((N \otimes M_{Y_{\bullet}}^{-1})_i) \rightarrow (N \otimes M_{Y_{\bullet}}^{-1})_j$$ 
are nothing else but the isomorphisms $(LY_{\theta}^*(N_i) \rightarrow N_j) \otimes M_{Y_j}^{-1}$. Then, it follows from Proposition \ref{SV} that $N \otimes M_{Y_{\bullet}}^{-1} \cong T_{Y_{\bullet}}$, which completes the proof. 
\end{proof}

Notice that the condition $H^1(CC(Y_{\bullet}),R^{\times})=0$ is automatically satisfied if $R=\Z/2$, which is the case we will be interested in.

Before proceeding with the next results, we recall some definitions about coherence. A smooth morphism $\pi:X_{\bullet} \rightarrow Y_{\bullet}$ of smooth simplicial schemes is called smooth coherent if for any simplicial map $\theta:[i] \rightarrow [j]$ the following diagram
$$
\xymatrix{
X_j \ar@{->}[r]^{\pi_j} \ar@{->}[d]_{X_{\theta}} & Y_j \ar@{->}[d]^{Y_{\theta}}\\
X_i \ar@{->}[r]_{\pi_i} & Y_i
}
$$
is cartesian and all the $\pi_j$ are smooth. An object $N$ in ${\mathcal DM}_{eff}^-(Y_{\bullet},R)$ is called coherent if, for any simplicial map $\theta:[i] \rightarrow [j]$, the structural map $N_{\theta}:LY_{\theta}^*(N_i) \rightarrow N_j$ is an isomorphism. Let ${\mathcal DM}_{coh}^-(Y_{\bullet},R)$ be the full subcategory of ${\mathcal DM}_{eff}^-(Y_{\bullet},R)$ consisting of coherent objects. We notice that ${\mathcal DM}_{coh}^-(Y_{\bullet},R)$ is closed under taking cones and arbitrary direct sums, since $LY_{\theta}^*$ is a triangulated functor. It immediately follows from these definitions that, if $\pi$ is smooth coherent, then $L\pi_{\#}$ maps coherent objects to coherent ones and, in particular, $M(X_{\bullet} \xrightarrow{\pi} Y_{\bullet})$ belongs to ${\mathcal DM}_{coh}^-(Y_{\bullet},R)$, where $M(X_{\bullet} \xrightarrow{\pi} Y_{\bullet})$ is nothing else but the image $L\pi_{\#}(T_{X_{\bullet}})$ of the trivial Tate motive.

We present now the main technique taken from \cite{SV} we will use in our computation. This result allows to generate long exact sequences (of the same nature of Gysin sequences for sphere bundles in topology) in motivic cohomology associated to fibrations with reduced fibers which are motivically invertible.

\begin{prop}\label{Thom1}
Let $\pi:X_{\bullet} \rightarrow Y_{\bullet}$ be a smooth coherent morphism of smooth simplicial schemes over $k$ and $A$ a smooth $k$-scheme such that:\\
1) over the $0$ simplicial component $\pi$ is isomorphic to the map $Y_0 \times A \rightarrow Y_0$;\\
2) $H^1(CC(Y_{\bullet}),R^{\times})=0$;\\
3) $\widetilde{M}(A)$ is an invertible motive in ${\mathcal DM}_{eff}^-(k,R)$, where by $\widetilde{M}(A)$ we mean $Cone(M(A) \rightarrow T)[-1]$.\\
Then, $Cone(\pi) \cong \widetilde{M}(A)_{Y_{\bullet}}[1] \in {\mathcal DM}_{eff}^-(Y_{\bullet},R)$.
\end{prop}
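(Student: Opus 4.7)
The plan is to apply Corollary \ref{SVI} to the invertible motive $M := \widetilde{M}(A)[1]$ (invertibility passes from $\widetilde{M}(A)$ to its shift, since $M^{-1}[-1]$ is a tensor inverse) and to the object $N := \mathrm{Cone}\bigl(M(X_\bullet \xrightarrow{\pi} Y_\bullet) \to T_{Y_\bullet}\bigr)$. Hypothesis 2) is exactly the cohomological assumption of that corollary, so it suffices to verify that $N$ is coherent and that each graded component $N_i$ is isomorphic to $M_{Y_i}$; then the conclusion $N \cong M_{Y_\bullet} = \widetilde{M}(A)_{Y_\bullet}[1]$ is immediate.

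For coherence, recall from the discussion preceding the proposition that $\pi$ being smooth coherent implies $M(X_\bullet \xrightarrow{\pi} Y_\bullet) = L\pi_\#(T_{X_\bullet}) \in {\mathcal DM}_{coh}^-(Y_\bullet,R)$. The pulled-back unit $T_{Y_\bullet} = c^*(T)$ is trivially coherent, and the subcategory of coherent objects is closed under cones, so $N$ is coherent; in particular all its structure maps $LY_\theta^*(N_i) \to N_j$ are isomorphisms. To identify $N_0$, I would use hypothesis 1): since $\pi_0$ is the projection $Y_0 \times A \to Y_0$, we have $L\pi_{0\#}(T_{X_0}) \cong M(A)_{Y_0}$, and the induced map to $T_{Y_0}$ is the pullback of the structural morphism $M(A) \to T$. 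Applying the exact functor $c^*: {\mathcal DM}_{eff}^-(k,R) \to {\mathcal DM}_{eff}^-(Y_0,R)$ to the distinguished triangle defining $\widetilde{M}(A)$, this yields
\[
N_0 \;\cong\; \mathrm{Cone}\bigl(M(A) \to T\bigr)_{Y_0} \;=\; \widetilde{M}(A)[1]_{Y_0} \;=\; M_{Y_0}.
\]

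To propagate the identification to higher simplicial degrees, fix for each $i$ a simplicial map $\theta:[0] \to [i]$. Coherence of $N$ gives $N_i \cong LY_\theta^*(N_0)$, and coherence of $M_{Y_\bullet}$ (a pullback along $Y_\bullet \to \mathrm{Spec}(k)$, hence automatically coherent) gives $LY_\theta^*(M_{Y_0}) \cong M_{Y_i}$; composing the two isomorphisms yields $N_i \cong M_{Y_i}$ for every $i$. Corollary \ref{SVI} then delivers the desired conclusion.

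The step I expect to require the most care is the identification of $N_0$ with $\widetilde{M}(A)[1]_{Y_0}$: it relies on computing $L\pi_{0\#}(T_{X_0})$ for the trivial fibration $Y_0 \times A \to Y_0$ together with the compatibility of the cone construction with the pullback functor $c^*$. Once $N_0$ is pinned down, the rest of the argument is a formal application of coherence and of Corollary \ref{SVI}, and no genuine geometric input beyond hypotheses 1)--3) is needed.
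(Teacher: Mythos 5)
Your proposal is correct and takes essentially the same approach as the paper: both start from the triangle $M(X_{\bullet}\xrightarrow{\pi} Y_{\bullet}) \rightarrow T_{Y_{\bullet}} \rightarrow Cone(\pi)$, note that $Cone(\pi)$ is coherent because coherent objects are closed under cones, identify the simplicial components with $\widetilde{M}(A)_{Y_i}[1]$, and conclude by Corollary \ref{SVI}. The only cosmetic difference is that the paper identifies every component at once, using that smooth coherence makes each square cartesian so that $\pi_i$ is the projection $Y_i \times A \rightarrow Y_i$ for all $i$, whereas you identify only $N_0$ and then propagate along structure maps $LY_{\theta}^*(N_0) \rightarrow N_i$ using coherence of $Cone(\pi)$ and of $M_{Y_{\bullet}}$; the two verifications are equivalent.
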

\begin{proof}
In the motivic category ${\mathcal DM}_{eff}^-(Y_{\bullet},R)$ we have a distinguished triangle 
$$M(X_{\bullet} \xrightarrow{\pi} Y_{\bullet}) \rightarrow T_{Y_{\bullet}} \rightarrow Cone(\pi) \rightarrow M(X_{\bullet} \xrightarrow{\pi} Y_{\bullet})[1]$$ 
By condition $1$ and from the fact that our morphism is smooth coherent it follows that it is the projection over any simplicial component. So, we obtain that the morphism $\pi_i:Y_i \times A \cong X_i \rightarrow Y_i$ induces in ${\mathcal DM}_{eff}^-(Y_i,R)$ the map $M(A)_{Y_i} \rightarrow T_{Y_i}$ for any $i$. Thus, $Cone(\pi)_i \cong \widetilde{M}(A)_{Y_i}[1] $ is an invertible motive in ${\mathcal DM}_{eff}^-(Y_i,R)$. Since $M(X_{\bullet} \xrightarrow{\pi} Y_{\bullet})$ and $T_{Y_{\bullet}}$ belong to ${\mathcal DM}_{coh}^-(Y_{\bullet},R)$, we have that $Cone(\pi)$ is in ${\mathcal DM}_{coh}^-(Y_{\bullet},R)$ and, by Corollary \ref{SVI}, we get that $Cone(\pi) \cong \widetilde{M}(A)_{Y_{\bullet}}[1]$ in ${\mathcal DM}_{eff}^-(Y_{\bullet},R)$, as we aimed to show. 
\end{proof}

Later, we will also need the following result about functoriality of the isomorphism found in the previous proposition.

\begin{prop}\label{Thom2}
Let $\pi:X_{\bullet} \rightarrow Y_{\bullet}$ and $\pi':X'_{\bullet} \rightarrow Y'_{\bullet}$ be smooth coherent morphisms of smooth simplicial schemes over $k$ and $A$ a smooth $k$-scheme that satisfies all conditions from the previous proposition with respect to $\pi'$ and such that the following diagram is cartesian with all morphisms smooth
$$
\xymatrix{
X_{\bullet} \ar@{->}[r]^{\pi} \ar@{->}[d]_{p_X} & Y_{\bullet} \ar@{->}[d]^{p_Y}\\
X'_{\bullet} \ar@{->}[r]_{\pi'} & Y'_{\bullet}
}
$$
	Then, the induced square of motives in the category ${\mathcal DM}_{eff}^-(Y'_{\bullet},R)$ extends uniquely to a morphism of triangles where $Lp_{Y\#}Cone(\pi) \rightarrow Cone(\pi')$ is given by $M(p_Y) \otimes id_{{\widetilde M}(A)_{Y'_{\bullet}}[1]}$.
\end{prop}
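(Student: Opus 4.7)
The approach is to realise everything after applying $Lp_{Y\#}$ as the projection-formula consequence of smooth base change along the cartesian square, so that both the morphism of triangles and the identification of its third component come out simultaneously.

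First, smooth base change for the cartesian square with all morphisms smooth gives an isomorphism $Lp_Y^{*}L\pi'_{\#} \cong L\pi_{\#}Lp_X^{*}$, which evaluated on $T_{X'_{\bullet}}$ yields $M(X_{\bullet} \xrightarrow{\pi} Y_{\bullet}) \cong Lp_Y^{*}M(X'_{\bullet} \xrightarrow{\pi'} Y'_{\bullet})$ compatibly with the augmentations to $T_{Y_{\bullet}} = Lp_Y^{*}T_{Y'_{\bullet}}$. Hence the defining triangle of $Cone(\pi)$ is $Lp_Y^{*}$ applied to that of $Cone(\pi')$, and in particular $Cone(\pi) \cong Lp_Y^{*}Cone(\pi')$. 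Combined with Proposition \ref{Thom1} for $\pi'$ and with $Lp_Y^{*}\widetilde{M}(A)_{Y'_{\bullet}} = \widetilde{M}(A)_{Y_{\bullet}}$ (both are $c^{*}$-pullbacks from $Spec(k)$), this also reproves the isomorphism $Cone(\pi) \cong \widetilde{M}(A)_{Y_{\bullet}}[1]$ for $\pi$.

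Next, apply the triangulated functor $Lp_{Y\#}$ to the distinguished triangle for $\pi$. Since $p_Y$ is smooth, the projection formula gives $Lp_{Y\#}(Lp_Y^{*}N) \cong Lp_{Y\#}(T_{Y_{\bullet}}) \otimes N = M(p_Y) \otimes N$, and under this identification the counit $Lp_{Y\#}Lp_Y^{*}N \to N$ becomes the map $M(p_Y) \otimes N \to T_{Y'_{\bullet}} \otimes N = N$ induced by the canonical contraction $M(p_Y) \to T_{Y'_{\bullet}}$. Applying this to the pullback triangle for $Cone(\pi)$ and invoking Proposition \ref{Thom1} for $\pi'$, one obtains a morphism of distinguished triangles in ${\mathcal DM}_{eff}^{-}(Y'_{\bullet},R)$ whose first two columns are precisely the morphisms induced by the cartesian square on motives, and whose third column is $M(p_Y) \otimes id_{\widetilde{M}(A)_{Y'_{\bullet}}[1]}$, exactly as required.

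The main obstacle is the naturality check that the first two columns of this construction really do coincide with the ``induced square'' of the statement, i.e.\ that the counit maps transported through the base-change isomorphism give the canonical morphisms $Lp_{Y\#}M(X_{\bullet} \xrightarrow{\pi} Y_{\bullet}) \to M(X'_{\bullet} \xrightarrow{\pi'} Y'_{\bullet})$ and $M(p_Y) = Lp_{Y\#}T_{Y_{\bullet}} \to T_{Y'_{\bullet}}$. This is a routine but slightly lengthy unwinding of the adjunctions $(Lp_{Y\#},Lp_Y^{*})$ and $(L\pi_{\#},L\pi^{*})$ together with the compatibility of base change with counits. The ``uniqueness'' clause is then absorbed into the canonicity of this construction.
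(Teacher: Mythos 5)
Your argument takes a genuinely different route from the paper's, but as it stands it has concrete gaps. First, everything rests on smooth base change $Lp_Y^*L\pi'_{\#}\cong L\pi_{\#}Lp_X^*$ and on the projection formula for $Lp_{Y\#}$, together with the compatibility of the projection-formula isomorphism with the counit of $(Lp_{Y\#},Lp_Y^*)$, all taken in ${\mathcal DM}_{eff}^-(-,R)$ over a \emph{simplicial} base. None of these is available in \cite{V1} or \cite{SV}, and the paper deliberately restricts itself to the two adjunctions recalled in section 3; so in this setting each of these statements would itself require proof, and they are not obviously shorter than the proposition you are trying to establish. Moreover, the step you set aside as ``routine but slightly lengthy'' --- that the first two vertical maps of your constructed morphism of triangles coincide with $M(p_X)$ and $M(p_Y)$, i.e.\ with the square actually induced by the cartesian diagram --- is exactly where the content of the proposition lies; without it you have a morphism between the correct triangles, but not an extension of the given square. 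There is also a point you pass over silently: identifying the third map as $M(p_Y)\otimes id_{\widetilde{M}(A)_{Y'_{\bullet}}[1]}$ presupposes that your base-change identification $Cone(\pi)\cong Lp_Y^*Cone(\pi')$ is compatible with the isomorphisms furnished by Proposition \ref{Thom1}. For $R=\Z/2$ this is forced because the relevant automorphism group $H^{0,0}(Y_{\bullet},R)^{\times}$ is trivial, but the proposition is stated for any $R$ with $H^1(CC(Y_{\bullet}),R^{\times})=0$, where this compatibility needs an argument; the paper's restriction to the $0$-th simplicial component is precisely what settles it.

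Second, the uniqueness clause is not proved: exhibiting one canonical completion does not show there are no other completions, and ``canonicity'' is not a substitute for an argument. Uniqueness requires a vanishing statement, since two completions of the same square differ by a morphism controlled by groups such as $Hom(Lp_{Y\#}\widetilde{M}(A)_{Y_{\bullet}}[1],T_{Y'_{\bullet}})$. This is how the paper proceeds: it takes an \emph{arbitrary} completion, shows on the $0$-th simplicial component that it must equal $M(p_{Y_0})\otimes id_{\widetilde{M}(A)_{Y'_0}[1]}$ because $Hom(Lp_{Y_0\#}\widetilde{M}(A)_{Y_0}[1],T_{Y'_0})=0$, and then globalizes using that both candidate maps live in $Hom(Lp_{Y\#}\widetilde{M}(A)_{Y_{\bullet}}[1],\widetilde{M}(A)_{Y'_{\bullet}}[1])\cong H^{0,0}(Y_{\bullet},R)$ and that the restriction $r_0^*:H^{0,0}(Y_{\bullet},R)\rightarrow H^{0,0}(Y_0,R)$ is injective. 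Your proposal needs an analogous vanishing or injectivity argument, both to obtain uniqueness and to compare your particular construction with an arbitrary completion of the induced square.
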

\begin{proof}
First of all, we notice that in ${\mathcal DM}_{eff}^-(Y'_{\bullet},R)$ there is the following morphism of distinguished triangles
$$
\xymatrix{
Lp_{Y\#}M(X_{\bullet} \xrightarrow{\pi} Y_{\bullet}) \ar@{->}[r] \ar@{->}[d]_{M(p_X)} & Lp_{Y\#}T_{Y_{\bullet}} \ar@{->}[r] \ar@{->}[d]^{M(p_Y)} & Lp_{Y\#}Cone(\pi) \cong Lp_{Y\#}{\widetilde M}(A)_{Y_{\bullet}}[1] \ar@{->}[r] \ar@{->}[d]^{M(p)} & Lp_{Y\#}M(X_{\bullet} \xrightarrow{\pi} Y_{\bullet})[1] \ar@{->}[d]^{M(p_X)[1]}\\
M(X'_{\bullet} \xrightarrow{\pi'} Y'_{\bullet}) \ar@{->}[r] & T_{Y'_{\bullet}} \ar@{->}[r] & Cone(\pi') \cong {\widetilde M}(A)_{Y'_{\bullet}}[1] \ar@{->}[r] & M(X'_{\bullet} \xrightarrow{\pi'} Y'_{\bullet})[1]
}
$$
where the isomorphisms in the diagram are due to Proposition \ref{Thom1}. Once restricted to the $0$ simplicial component the previous diagram becomes in ${\mathcal DM}_{eff}^-(Y'_0,R)$
$$
\xymatrix{
	Lp_{Y_0\#}M(A)_{Y_0} \ar@{->}[r] \ar@{->}[d]_{M(p_{Y_0}) \otimes id_{M(A)_{Y'_0}}} & Lp_{Y_0\#}T_{Y_0} \ar@{->}[r] \ar@{->}[d]^{M(p_{Y_0})} & Lp_{Y_0\#}{\widetilde M}(A)_{Y_0}[1] \ar@{->}[r] \ar@{->}[d]^{M(p_0)} & Lp_{Y_0\#}M(A)_{Y_0}[1] \ar@{->}[d]^{M(p_{Y_0}) \otimes id_{M(A)_{Y'_0}}[1]}\\
	M(A)_{Y'_0}  \ar@{->}[r] & T_{Y'_0} \ar@{->}[r] & {\widetilde M}(A)_{Y'_0}[1] \ar@{->}[r] & M(A)_{Y'_0}[1]
}
$$
Note that 
$$Hom_{{\mathcal DM}_{eff}^-(Y'_0,R)}(Lp_{Y_0\#}{\widetilde M}(A)_{Y_0}[1],T_{Y'_0}) \cong Hom_{{\mathcal DM}_{eff}^-(Y_0,R)}({\widetilde M}(A)_{Y_0}[1],p_{Y_0}^*T_{Y'_0}) \cong $$
$$Hom_{{\mathcal DM}_{eff}^-(Y_0,R)}({\widetilde M}(A)_{Y_0}[1],T_{Y_0}) \cong Hom_{{\mathcal DM}_{eff}^-(k,R)}({\widetilde M}(Y_0 \times A)[1],T) \cong  0$$
since $Y_0 \times A$ is a smooth scheme over $k$ and, so, has no cohomology in bidegree $(0)[-1]$. From this we deduce that $M(p_0)$ must be $M(p_{Y_0}) \otimes id_{{\widetilde M}(A)_{Y'_0}[1]}$. 

At this point we notice that both $M(p)$ and $M(p_Y) \otimes id_{{\widetilde M}(A)_{Y'_{\bullet}}[1]}$ belong to 
$$Hom_{{\mathcal DM}_{eff}^-(Y'_{\bullet},R)}(Lp_{Y\#}{\widetilde M}(A)_{Y_{\bullet}}[1],{\widetilde M}(A)_{Y'_{\bullet}}[1]) \cong Hom_{{\mathcal DM}_{eff}^-(Y_{\bullet},R)}({\widetilde M}(A)_{Y_{\bullet}},p_Y^*{\widetilde M}(A)_{Y'_{\bullet}}) \cong$$
$$Hom_{{\mathcal DM}_{eff}^-(Y_{\bullet},R)}({\widetilde M}(A)_{Y_{\bullet}},{\widetilde M}(A)_{Y_{\bullet}}) \cong H^{0,0}(Y_{\bullet},R)$$
since ${\widetilde M}(A)_{Y_{\bullet}}$ is an invertible motive. Similarly $M(p_0)=M(p_{Y_0})\otimes id_{{\widetilde M}(A)_{Y'_0}[1]}$ belongs to 
$$Hom_{{\mathcal DM}_{eff}^-(Y'_0,R)}(Lp_{Y_0\#}{\widetilde M}(A)_{Y_0}[1],{\widetilde M}(A)_{Y'_0}[1]) \cong Hom_{{\mathcal DM}_{eff}^-(Y_0,R)}({\widetilde M}(A)_{Y_0},p_{Y_0}^*{\widetilde M}(A)_{Y'_0}) \cong$$
$$Hom_{{\mathcal DM}_{eff}^-(Y_0,R)}({\widetilde M}(A)_{Y_0},{\widetilde M}(A)_{Y_0}) \cong H^{0,0}(Y_0,R)$$
Now, since the complex $R(0)$ is quasi isomorphic to the constant sheaf $R$ concentrated in degree $0$, we have that $H^{0,0}(Y_{\bullet},R)$ is just the sheaf cohomology group $H^0(Y_{\bullet},R)$. From \cite[sections 5.1 and 5.2]{D}, one knows how to compute the sheaf cohomology of a simplicial scheme in terms of the sheaf cohomology of its simplicial components. In particular, the group of global sections $\Gamma(Y_{\bullet,}R)=H^0(Y_{\bullet},R)$ is given by the kernel of the morphism $\Gamma(Y_0,R) \rightarrow \Gamma(Y_1,R)$ induced by the simplicial data. This means that
$$H^{0,0}(Y_{\bullet},R)= Ker(H^{0,0}(Y_0,R) \rightarrow H^{0,0}(Y_1,R))$$ 
In other words, $H^{0,0}(Y_{\bullet},R)$ is the free $R$-module with rank equal to the number of connected components of $Y_\bullet$, where the set of connected components of $Y_{\bullet}$ is obtained from the set of connected components of $Y_0$ by identifying all the couples of components of $Y_0$ linked by a connected component of $Y_1$ via the face maps. On the other hand, $H^{0,0}(Y_0,R)$ is the free $R$-module with rank equal to the number of connected components of $Y_0$. It follows that the restriction
$$r_0^*:H^{0,0}(Y_{\bullet},R) \rightarrow H^{0,0}(Y_0,R)$$
is injective, hence $M(p)=M(p_Y) \otimes id_{{\widetilde M}(A)_{Y'_{\bullet}}[1]}$, which is what we aimed to prove. 
\end{proof}

\section{The Nisnevich classifying space}

Let us recall at this point some facts about Nisnevich and \'etale classifying spaces of a linear algebraic group $G$ over $Spec(k)$. 

Denote by $EG$ the simplicial scheme defined by $(EG)_n=G^{n+1}$ with face and degeneracy maps given by partial projections and partial diagonals respectively. There is an obvious action of $G$ on $EG$ induced by the operation in $G$, then the Nisnevich classifying space $BG$ is the simplicial scheme defined by $BG=EG/G$. In other words, $BG$ is the simplicial Nisnevich sheaf with simplicial component $(BG)_n$ given by the Nisnevich sheaf $U \mapsto Hom_{Sm/k}(U,G^n)$ for any $n \geq 0$ and standard face and degeneracy maps of the bar construction. 

Now, consider the morphism of sites $\pi:(Sm/k)_{et} \rightarrow (Sm/k)_{Nis}$. This induces a pair of adjoint functors 
\begin{align*}
{\mathcal H}_s((S&m/k)_{et})\\
\pi^* \uparrow & \downarrow R\pi_*\\
{\mathcal H}_s((S&m/k)_{Nis})
\end{align*}
where $\pi_*$ is the restriction to Nisnevich topology and $\pi^*$ is \'etale sheafification. Then, the \'etale classifying space is defined by $B_{et}G=R\pi_*\pi^*BG$. Furthermore, we recall that in $\cite{MV}$ it is constructed, starting from a faithful representation $\rho:G \hookrightarrow GL(V)$, a geometric model $BG_{gm}$ for the $A^1$-homotopy type of $B_{et}G$, obtained from an infinite-dimensional affine space $\oplus_{i=1}^{\infty} V$ by removing a closed subscheme in order to let the diagonal action of $G$ be free and then taking the quotient.

In this paper we will be mainly interested in Nisnevich classifying spaces. We finish this section by showing some of their features.    

Let $G$ be a linear algebraic group over $k$ and $H$ an algebraic subgroup of $G$. Denote by $\widetilde{B}H$ the bisimplicial scheme $(EH \times EG)/H$, where $H$ acts on $EH \times EG$ diagonally, i.e. $(h_1,\dots,h_m,g_1,\dots,g_n)h=(h_1h,\dots,h_mh,g_1h,\dots,g_nh)$ for any $h_1,\dots,h_m,h$ in $H$ and $g_1,\dots,g_n$ in $G$, and by $\widehat{B}H$ the simplicial scheme $EG/H$. We observe that the natural fibration $\pi:\widehat{B}H \rightarrow BG$ is trivial over simplicial components and has fiber $G/H$. There are two natural maps $\phi:\widetilde{B}H \rightarrow BH$ and $\psi:\widetilde{B}H \rightarrow \widehat{B}H$. We notice that $\phi$ is an isomorphism in ${\mathcal H}_s(k)$ since over each simplicial component it is a trivial fibration with contractible fiber $EG$. On the other hand, $\psi$ is not in general an isomorphism in ${\mathcal H}_s(k)$. However, we have the following statement. 

\begin{prop}\label{BG1}
If the map $Hom_{{\mathcal H}_s(k)}(Spec(R),B_{et}H) \rightarrow Hom_{{\mathcal H}_s(k)}(Spec(R),B_{et}G)$ is injective for any Henselian local ring $R$ over $k$, then $\psi$ is an isomorphism in ${\mathcal H}_s(k)$. In particular, $BH \cong \widehat{B}H$ in ${\mathcal H}_s(k)$.
\end{prop}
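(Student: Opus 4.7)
The plan is to show that $\psi$ is a Nisnevich-local weak equivalence; together with the equivalence $\phi$ already exhibited just before the statement, this yields $BH\cong\widetilde{B}H\cong\widehat{B}H$ in ${\mathcal H}_s(k)$. Since ${\mathcal H}_s(k)$ is the simplicial homotopy category of Nisnevich sheaves on $Sm/k$, it suffices to verify that $\psi$ induces a weak equivalence of simplicial sets on every Nisnevich stalk, i.e.\ on $Spec(R)$ for an arbitrary essentially smooth Henselian local ring $R$ over $k$.

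First I would compute both stalks explicitly. The diagonal right $H$-action on $H^{n+1}\times G^{n+1}$ is free with a global trivialization coming from setting the first $H$-coordinate equal to the identity, so $(H^{n+1}\times G^{n+1})/H\cong H^n\times G^{n+1}$ as schemes; evaluating at $R$ identifies $\widetilde{B}H(Spec(R))$ with the homotopy quotient $EH(R)\times_{H(R)}EG(R)$ of the simplicial bar constructions of the abstract groups $H(R)$ and $G(R)$. For $\widehat{B}H$, the change of variables $(g_0,\dots,g_n)\mapsto(g_0,\,g_1g_0^{-1},\dots,g_ng_0^{-1})$ exhibits $G^{n+1}/H\cong (G/H)\times G^n$ as a scheme, hence $\widehat{B}H_n(Spec(R))=(G/H)(R)\times G(R)^n$.

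The hypothesis enters via the identification $(G/H)(R)=G(R)/H(R)$. Using $Hom_{{\mathcal H}_s(k)}(Spec(R),B_{et}G)=H^1_{et}(R,G)$ as pointed sets, the hypothesis reads that the induced map $H^1_{et}(R,H)\to H^1_{et}(R,G)$ has trivial kernel. The long exact sequence of non-abelian \'etale cohomology associated to $1\to H\to G\to G/H\to 1$
$$G(R)\longrightarrow (G/H)(R)\longrightarrow H^1_{et}(R,H)\longrightarrow H^1_{et}(R,G)$$
then forces the connecting map $(G/H)(R)\to H^1_{et}(R,H)$ to be trivial, whence $G(R)\to (G/H)(R)$ is surjective and $(G/H)(R)=G(R)/H(R)$. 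Consequently $\widehat{B}H_n(Spec(R))=G(R)^{n+1}/H(R)=(EG(R)/H(R))_n$.

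With both stalks identified, $\psi(Spec(R))$ becomes the canonical map from the homotopy quotient $EH(R)\times_{H(R)}EG(R)$ to the strict quotient $EG(R)/H(R)$. This is a weak equivalence of simplicial sets because $H(R)$ acts freely on the contractible simplicial set $EG(R)$ (inheriting the free $G(R)$-translation action) and $EH(R)$ is contractible. The main obstacle is precisely the bridge from the \'etale-cohomological hypothesis on $B_{et}H\to B_{et}G$ to the sheaf-theoretic equality $(G/H)(R)=G(R)/H(R)$; this is where the non-abelian exact sequence above is essential, exploiting the fact that Henselian local rings do not distinguish Nisnevich and \'etale sections of schemes so that only the \'etale obstruction remains, and it is killed by the hypothesis.
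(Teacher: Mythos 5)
Your proposal is correct, and it uses the hypothesis in exactly the way the paper does: over a henselian local ring, an $H$-torsor that becomes trivial after extension to $G$ is already trivial. The difference is in the homotopical packaging. The paper slices $\psi$ over the simplicial components of $BG$: for fixed $n$ the map $(EH\times G^{n+1})/H\to G^{n+1}/H$ is recognized as the \v{C}ech simplicial scheme of the $H$-torsor $G^{n+1}\to G^{n+1}/H$ mapping to its base, so by the Morel--Voevodsky criterion (\cite[Lemma 1.11]{MV}) everything reduces to showing this torsor is Nisnevich-locally split, which is precisely what the hypothesis gives on fibers over henselian local points. You instead argue stalkwise on the diagonal: you identify the stalk of $\widetilde{B}H$ with the Borel construction $EH(R)\times_{H(R)}EG(R)$, and the stalk of $\widehat{B}H$ with the strict quotient $EG(R)/H(R)$ via the trivialization $G^{n+1}/H\cong (G/H)\times G^{n}$ and the exact sequence of nonabelian \'etale cohomology for $1\to H\to G\to G/H\to 1$, and then invoke the standard comparison of homotopy and strict quotients for a free action. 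Two small points you leave implicit and should record: for the bijection $(G^{n+1}/H)(R)\cong G(R)^{n+1}/H(R)$ one also needs injectivity, i.e.\ that two lifts of the same $R$-point differ by an element of $H(R)$, which follows from the torsor property of $G^{n+1}\to G^{n+1}/H$; and the use of \'etale (rather than fppf) cohomology requires smoothness of $H$, harmless here. The paper's route is leaner, needing neither the cohomology exact sequence nor explicit trivializations, only splitness of the fiber torsor; yours is more explicit at the level of simplicial sets and makes the role of the identification $(G/H)(R)=G(R)/H(R)$ transparent. Both ultimately rest on detecting simplicial weak equivalences on henselian local points.
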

\begin{proof} 
	We start by noticing that the restriction of $\psi$ over any simplicial component is given by the morphism $(EH \times G^{n+1})/H \rightarrow G^{n+1}/H$. The simplicial scheme $(EH \times G^{n+1})/H$ is nothing else but the \v{C}ech simplicial scheme $\check{C}({G^{n+1} \rightarrow G^{n+1}/H})$ associated to the $H$-torsor $G^{n+1} \rightarrow G^{n+1}/H$ which becomes split once extended to $G$. In order to check that
	$$\check{C}({G^{n+1} \rightarrow G^{n+1}/H}) \rightarrow G^{n+1}/H$$ 
	is a simplicial weak equivalence it is enough, by \cite[Lemma 1.11]{MV}, to evaluate on henselian local rings. Therefore, we only need to prove that the $H$-torsor $G^{n+1} \rightarrow G^{n+1}/H$ is Nisnevich locally split. Now, the fiber of $G^{n+1} \rightarrow G^{n+1}/H$ over any $Spec(R)$ of $G^{n+1}/H$, where $R$ is henselian local, is given by a $H$-torsor $P \rightarrow Spec(R)$ whose extension to $G$ is split, so split itself by hypothesis. Hence, $G^{n+1} \rightarrow G^{n+1}/H$ is Nisnevich locally split. This implies that $\psi$ is an isomorphism in ${\mathcal H}_s(k)$. 
\end{proof}

In practice, in the unitary group case (as in many other cases), it will be enough to check the hypothesis of the previous proposition only for field extensions of $k$. The reason resides in the fact that rationally trivial hermitian forms are locally trivial (see \cite[Theorem 9.2]{OP}).

There are obvious morphisms $j:BH \rightarrow \widehat{B}H$ and $g:BH \rightarrow BG$ induced by the embedding $H \hookrightarrow G$.

\begin{prop}\label{BG2}
Under the hypothesis of Proposition \ref{BG1}, $j$ is an isomorphism in ${\mathcal H}_s(k)$.
\end{prop}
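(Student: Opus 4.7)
The plan is to exploit the two comparison maps $\phi:\widetilde{B}H \to BH$ and $\psi:\widetilde{B}H \to \widehat{B}H$ already in hand, and to produce a common section that identifies $j$ with the composition $\psi \circ \phi^{-1}$ in ${\mathcal H}_s(k)$. Under the hypotheses of Proposition \ref{BG1}, $\phi$ is already known to be an isomorphism in ${\mathcal H}_s(k)$ (by the discussion preceding Proposition \ref{BG1}: on each simplicial level it is a trivial projection with contractible fiber $EG$), and Proposition \ref{BG1} upgrades $\psi$ to an isomorphism in ${\mathcal H}_s(k)$ as well. So once $j$ has been expressed as a composition of $\phi^{-1}$ and $\psi$, the result follows formally.

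The concrete construction is to define a simplicial map $s:BH \to \widetilde{B}H$ whose $n$-th component
$$s_n:H^{n+1}/H \longrightarrow (H^{n+1}\times G^{n+1})/H$$
sends the class of $(h_1,\ldots,h_{n+1})$ to the class of $(h_1,\ldots,h_{n+1},\iota(h_1),\ldots,\iota(h_{n+1}))$, where $\iota:H\hookrightarrow G$ denotes the given embedding. This assignment is $H$-equivariant with respect to the diagonal actions on both sides, hence descends to a well-defined map on quotients. By construction $\phi\circ s = \mathrm{id}_{BH}$ on each simplicial component, and $\psi\circ s = j$ on each simplicial component, directly from the definitions of the three maps on $H^{n+1}/H$. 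Since $\phi$ is an isomorphism in ${\mathcal H}_s(k)$, the equation $\phi\circ s=\mathrm{id}$ forces $s = \phi^{-1}$ in ${\mathcal H}_s(k)$. Combining with $\psi\circ s = j$ gives $j = \psi\circ\phi^{-1}$, which, under the hypothesis of Proposition \ref{BG1}, is a composition of isomorphisms in ${\mathcal H}_s(k)$ and hence an isomorphism.

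The only non-formal step is to verify that $s$ is indeed a morphism of simplicial sheaves, i.e.\ compatible with the face and degeneracy maps of the bar construction on both sides. This reduces to the observation that each face map of $EH$ (respectively $EG$) is a partial projection and each degeneracy a partial diagonal, and that the embedding $\iota$ commutes with all such operations because it is a group homomorphism; so the naive formula for $s_n$ is manifestly natural in $[n]\in\Delta^{op}$. I do not anticipate a serious obstacle here: the substance of the proposition has already been packaged into Proposition \ref{BG1} (via the contractibility of $EG$ together with the Nisnevich-local splitting of the $H$-torsors $G^{n+1}\to G^{n+1}/H$), and the present statement is a formal consequence via the section $s$.
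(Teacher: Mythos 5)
Your proposal is correct, and it reaches the conclusion by a genuinely different (though neighboring) mechanism than the paper. The paper likewise passes to the diagonal $\Delta(\widetilde{B}H)$ and uses that $\phi$ and $\psi$ are levelwise weak equivalences (the latter by Proposition \ref{BG1}), but instead of producing a section it writes down an explicit simplicial homotopy $F_i^{(n)}(h_0,\dots,h_n,g_0,\dots,g_n)=(h_0,\dots,h_i,g_i,\dots,g_n)$ between $j\phi$ and $\psi$, and then concludes $j=\psi\circ\phi^{-1}$ in ${\mathcal H}_s(k)$. You instead exhibit the strict section $s\colon BH\to\Delta(\widetilde{B}H)$, $[h_0,\dots,h_n]\mapsto[h_0,\dots,h_n,\iota(h_0),\dots,\iota(h_n)]$, which is $H$-equivariant before passing to quotients (the diagonal $H$-action on the $G$-coordinates is exactly twisted by $\iota$) and is compatible with the bar-construction faces and degeneracies because $\iota$ is a group homomorphism; then $\phi\circ s=\mathrm{id}$ and $\psi\circ s=j$ hold on the nose, giving the same identity $j=\psi\circ\phi^{-1}$. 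The inputs are identical (contractibility of $EG$ for $\phi$, Proposition \ref{BG1} for $\psi$), so nothing is lost; what your route buys is that every verification is a strict equality of simplicial morphisms, so no simplicial homotopy identities need checking, whereas the paper's homotopy displays the relation $j\phi\simeq\psi$ directly on $\widetilde{B}H$ without choosing a splitting. The one point requiring care — that the section must land in the diagonal, i.e.\ the $g$-coordinates are repeated or deleted simultaneously with the $h$-coordinates — is exactly what your formula ensures, so the argument goes through.
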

\begin{proof}
Under the hypothesis of Proposition \ref{BG1} both $\phi$ and $\psi$ are morphisms of bisimplicial schemes which are weak equivalences over simplicial components, hence the induced morphisms on the associated diagonal simplicial schemes $\phi:\Delta(\widetilde{B}H) \rightarrow BH$ and $\psi:\Delta(\widetilde{B}H) \rightarrow \widehat{B}H$ are weak equivalences. In order to complete the proof we only need to notice that the morphisms $j\phi$ and $\psi$ are simplicial homotopic. A simplicial homotopy between them $F_i^{(n)}:(H^{n+1} \times G^{n+1})/H \rightarrow G^{n+2}/H$ is defined for any $n$ and any $0 \leq i \leq n$ by 
$$F_i^{(n)}(h_0,{\dots},h_n,g_0,{\dots},g_n)=(h_0,{\dots},h_i,g_i,{\dots},g_n)$$ 
\end{proof}

It immediately follows from the previous proposition and by noticing that $g=\pi j$ that the morphism $j^*:H(\widehat{B}H) \rightarrow H(BH)$ is an isomorphism of $H(BG)$-modules.

Propositions \ref{BG1} and \ref{BG2} apply in particular to the case when $G$ and $H$ are respectively $O_n$ and $O_{n-1}$. We recall that 
$$A_{q_n} \cong O_n/O_{n-1}$$
where $A_{q_n}$ is the affine quadric defined by the equation $q_n=1$. Moreover, we know that 
$$M(A_{q_n})=\Z \oplus \Z([n/2])[n-1] \in {\mathcal DM}^-_{eff}(k)$$ 
by \cite[Proposition 3.1.3]{SV}. Therefore, Proposition \ref{Thom1} applies to the fibration $\widehat{B}O_{n-1} \rightarrow BO_n$.
 
By previous considerations and by an induction argument, in $\cite{SV}$ it is proven the following theorem.

\begin{thm}\label{SubtleSW}
\cite[Theorem 3.1.1]{SV} There is a unique set $u_1,{\dots},u_n$ of classes in the motivic $\Z/2$-cohomology of $BO_n$ such that $deg(u_i)=([i/2])[i]$, $u_i$ vanishes when restricted to $H(BO_{i-1})$ for any $2 \leq i \leq n$ and
$$H(BO_n)=H[u_1,{\dots},u_n]$$
\end{thm}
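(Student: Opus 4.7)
I would proceed by induction on $n$. For $n = 0$, $BO_0 = Spec(k)$, so $H(BO_0) = H$ and the statement holds trivially. For the inductive step, assume $H(BO_{n-1}) = H[u_1, \dots, u_{n-1}]$. Consider the smooth coherent fibration $\pi : \widehat{B}O_{n-1} \to BO_n$ with fiber $A_{q_n} \cong O_n / O_{n-1}$. Since $\widetilde M(A_{q_n}) = \Z([n/2])[n-1]$ is invertible (from the cited description of $M(A_{q_n})$) and $R = \Z/2$ makes hypothesis $2)$ of Proposition \ref{Thom1} automatic, that proposition yields $Cone(\pi) \cong \Z([n/2])[n]_{BO_n}$ in $\mathcal{DM}_{eff}^-(BO_n, \Z/2)$.

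\textbf{Long exact sequence.} Applying motivic cohomology to the distinguished triangle $M(\widehat{B}O_{n-1} \to BO_n) \to T_{BO_n} \to Cone(\pi) \to M(\widehat{B}O_{n-1} \to BO_n)[1]$ produces
$$\cdots \to H^{a-n, b-[n/2]}(BO_n) \xrightarrow{\cdot u_n} H^{a,b}(BO_n) \xrightarrow{\pi^*} H^{a,b}(\widehat{B}O_{n-1}) \xrightarrow{\delta} H^{a+1-n, b-[n/2]}(BO_n) \to \cdots$$
where $u_n \in H^{n, [n/2]}(BO_n)$ is the class represented by the morphism $T_{BO_n} \to \Z([n/2])[n]_{BO_n}$ appearing in the triangle, and the leftmost arrow is cup product with $u_n$ (since the entire sequence is a sequence of $H(BO_n)$-modules, and the connecting map is multiplication by the image of $1$). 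By Proposition \ref{BG2} and the inductive hypothesis, $H(\widehat{B}O_{n-1}) \cong H(BO_{n-1}) = H[u_1, \dots, u_{n-1}]$.

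\textbf{Splitting and conclusion.} The crux is to show $\pi^*$ is surjective, equivalently $\delta = 0$. I would do this by inductively constructing compatible lifts $u_i \in H^{i, [i/2]}(BO_n)$ of the polynomial generators of $H(BO_{n-1})$ for each $i < n$. Such lifts arise by running the cone construction along the whole filtration $Spec(k) \leftarrow BO_1 \leftarrow \cdots \leftarrow BO_n$ and invoking Proposition \ref{Thom2}: the cartesian diagrams relating the fibrations $\widehat{B}O_{i-1} \to BO_i$ and $\widehat{B}O_{n-1} \to BO_n$ identify the corresponding connecting classes coherently, which lets one extend the class $u_i$ defined at stage $i$ to an element of $H(BO_n)$ restricting to $u_i$ in $H(BO_{n-1})$. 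Once $\pi^*$ is surjective the LES collapses to
$$0 \to H^{a-n, b-[n/2]}(BO_n) \xrightarrow{\cdot u_n} H^{a,b}(BO_n) \xrightarrow{\pi^*} H^{a,b}(BO_{n-1}) \to 0.$$
Injectivity of multiplication by $u_n$ combined with the induction then yields $H(BO_n) = H[u_1, \dots, u_n]$ as a polynomial $H$-algebra; uniqueness of the $u_i$ follows from a bidegree count once the polynomial structure is known, since the constraint that $u_i$ vanish in $H(BO_{i-1})$ together with its prescribed bidegree pins it down up to the polynomial relations. The main obstacle is precisely the coherent construction of these lifts across the tower, which is why Proposition \ref{Thom2} on functoriality of the Gysin-type isomorphism is essential.
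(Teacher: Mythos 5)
Your skeleton is the route the paper has in mind: the paper does not actually reprove this statement (it is quoted from \cite{SV}), but the text just before it assembles precisely your ingredients --- $A_{q_n}\cong O_n/O_{n-1}$, $\widetilde{M}(A_{q_n})=\Z([n/2])[n-1]$ invertible, Propositions \ref{Thom1}, \ref{BG1}, \ref{BG2}, and an induction on $n$ --- and the detailed model for how that induction is run is the unitary analogue, Theorem \ref{main}. Up to and including the Gysin-type long exact sequence and the identification $H(\widehat{B}O_{n-1})\cong H(BO_{n-1})$ as $H(BO_n)$-modules, your argument agrees with that route.

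The gap is in how you handle the crux, surjectivity of the restriction $g^*$. Proposition \ref{Thom2} cannot manufacture the lifts you describe: it applies to a cartesian square of smooth coherent fibrations with a \emph{common} fiber $A$ (the fibers $A_{q_i}$ and $A_{q_n}$ of $\widehat{B}O_{i-1}\to BO_i$ and $\widehat{B}O_{n-1}\to BO_n$ are different affine quadrics), and even where it does apply it only identifies the induced morphism of cones; it never ``extends'' a class from $H(BO_i)$ to $H(BO_n)$. Cohomology restricts down the tower $BO_i\to BO_n$, so producing an element of $H(BO_n)$ mapping to $u_i$ is exactly the statement at stake and needs an independent argument. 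The argument that works, and is the one carried out in Theorem \ref{main}, is obstruction-theoretic: the image of $u_i\in H^{i,[i/2]}(BO_{n-1})$ under the connecting map lies in $H^{i+1-n,[i/2]-[n/2]}(BO_n)$, which vanishes whenever $[i/2]<[n/2]$ because the motive of a smooth simplicial scheme has no cohomology in negative weights; since $g^*$ is a ring homomorphism, hitting the generators gives surjectivity, hence $\delta=0$ and $f^*$ is injective. Note, moreover, that this weight argument does not cover the borderline generator $u_{n-1}$ when $n$ is odd, where the obstruction group is $H^{0,0}(BO_n)\cong\Z/2\neq 0$; by exactness $\delta(u_{n-1})$ lies in the kernel of multiplication by $u_n=f^*(1)$ on $H^{0,0}(BO_n)$, so one needs the additional input that $u_n\neq 0$ (or some equivalent device, as in \cite{SV}) to conclude. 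This is exactly the point where the orthogonal case is more delicate than the unitary one, in which all obstruction groups sit in negative weight, and neither your Thom2 mechanism nor your concluding ``bidegree count'' detects it. The uniqueness statement, by contrast, is fine once the polynomial structure and the kernels of the restrictions are in place.
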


These new cohomology classes $u_i$ are called \textit{subtle Stiefel-Whitney classes}.

\section{\v{C}ech simplicial scheme and Rost motive of a quadratic extension}

Let $E=k(\sqrt{\alpha})$ be a quadratic extension of $k$. Then, the motive of $Spec(E)$ in ${\mathcal DM}_{eff}^-(k,\Z/2)$ is the Rost motive $M_{\alpha}$ of the Pfister form $\lva \alpha \rva$. It is proven in \cite{R} that this motive comes endowed with two morphisms $M_{\alpha} \rightarrow T$ and $T \rightarrow M_{\alpha}$ such that the composition $T \rightarrow M_{\alpha} \rightarrow T$ is the $0$ morphism and becomes a split distinguished triangle in ${\mathcal DM}_{eff}^-(E,\Z/2)$.

Moreover, in \cite[Theorem $4.4$]{V2} it is shown that $M_{\alpha}$ can be presented as an extension of two motives of \v{C}ech simplicial schemes. More precisely, in ${\mathcal DM}_{eff}^-(k,\Z/2)$ there is the following distinguished triangle 
\begin{equation}\label{eqn:star}
M_{\alpha} \rightarrow {\mathfrak X}_{\alpha} \rightarrow {\mathfrak X}_{\alpha}[1] \rightarrow M_{\alpha}[1] \tag{$*$}
\end{equation}
where ${\mathfrak X}_{\alpha}$ is the motive of the \v{C}ech simplicial scheme of the Pfister quadric associated to the Pfister form $\lva \alpha \rva$.

Let $\N$ be $Cone(T \rightarrow M_{\alpha})$ and $\n$ be $Cone(M_{\alpha} \rightarrow T)[-1]$. Since $Hom(T,T[j])=0$ for $j \neq 0$ and we are working with $\Z/2$-coefficients, the morphism $T \rightarrow M_{\alpha}$ is uniquely liftable to $\n$ while the morphism $M_{\alpha} \rightarrow T$ is uniquely extendable to $\N$. It immediately follows from the octahedron axiom that $Cone(\N \rightarrow T)[-1] \cong Cone(T \rightarrow \n)$. We will denote this motive by $\M$.

In this section, we will study the above mentioned motives and their motivic cohomology. We start by establishing relations among them.

\begin{prop}\label{12345}
The following isomorphisms hold in ${\mathcal DM}_{eff}^-(k,\Z/2)$:\\
1) $M_{\alpha} \otimes {\mathfrak X}_{\alpha} \cong M_{\alpha}$ via $M_{\alpha} \otimes ({\mathfrak X}_{\alpha} \rightarrow T)$;\\
2) $\N \otimes {\mathfrak X}_{\alpha} \cong {\mathfrak X}_{\alpha}$ via $(\N \rightarrow T) \otimes {\mathfrak X}_{\alpha}$;\\
3) $M_{\alpha} \otimes \N \cong M_{\alpha}$ via $M_{\alpha} \otimes (\N \rightarrow T)$;\\
4) $\n \otimes \N \cong T$;\\
5) $\M \otimes \N \cong \M[1]$ via $\M \otimes (\N \rightarrow T[1])$.
\end{prop}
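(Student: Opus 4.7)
The plan is to tensor the defining triangles of $\N$, $\n$, $\M$, and the triangle $(*)$ with appropriate factors, using the projection formula $M_\alpha\otimes X\cong p_\#p^*X$ (with $p:\mathrm{Spec}(E)\to\mathrm{Spec}(k)$ the finite étale projection, so that $M_\alpha\cong p_\#T$) as the main tool. Before any calculation I would record the restriction data to $E$: since $\lva\alpha\rva$ is hyperbolic over $E$, we have ${\mathfrak X}_\alpha|_E\cong T$ and $p^*M_\alpha\cong T\oplus T$, and under this splitting the map $T\to M_\alpha$ restricts to the diagonal while $M_\alpha\to T$ restricts to the fold. Short computations then give $\N|_E=Cone(T\xrightarrow{\mathrm{diag}}T\oplus T)\cong T$, $\n|_E=Cone(T\oplus T\xrightarrow{\mathrm{fold}}T)[-1]\cong T$, and, since the lift $T\to\n$ restricts to $\mathrm{id}_T$, $\M|_E=Cone(\mathrm{id}_T)=0$.

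Parts (1) and (3) then follow at once: both ${\mathfrak X}_\alpha\to T$ and $\N\to T$ restrict to identities over $E$, so by the projection formula $M_\alpha\otimes({\mathfrak X}_\alpha\to T)$ and $M_\alpha\otimes(\N\to T)$ are $p_\#$ applied to isomorphisms and hence are themselves isomorphisms. By the same argument, $M_\alpha\otimes\M=p_\#\M|_E=0$; this is the pivotal intermediate fact that will drive the remaining parts.

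For (4), tensor the defining triangle $T\to M_\alpha\to\N\to T[1]$ with $\n$ and use $\n\otimes M_\alpha\cong M_\alpha$ (again from the projection formula) to rewrite it as $\n\to M_\alpha\to\n\otimes\N\to\n[1]$. Its middle map is a nonzero element of $\mathrm{Hom}(\n,M_\alpha)=\mathbb{Z}/2$ (nonzero because over $E$ it is the diagonal inclusion), hence coincides with the middle map of the defining triangle of $\n$, forcing $\n\otimes\N\cong T$. For (5), tensor the defining triangle of $\N$ with $\M$; the vanishing $M_\alpha\otimes\M=0$ collapses it to $\M\to 0\to\M\otimes\N\to\M[1]$, yielding $\M\otimes\N\cong\M[1]$ via $\M\otimes(\N\to T[1])$ as claimed.

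Finally, (2) reduces to $\M\otimes{\mathfrak X}_\alpha=0$, since the cone of $(\N\to T)\otimes{\mathfrak X}_\alpha$ is $\M[1]\otimes{\mathfrak X}_\alpha$. Tensoring $(*)$ with $\M$ and inserting $M_\alpha\otimes\M=0$ produces an isomorphism $\M\otimes{\mathfrak X}_\alpha\cong\M\otimes{\mathfrak X}_\alpha[1]$, and this is the main obstacle: passing from shift-invariance to actual vanishing cannot be achieved by a purely triangulated manipulation. I would instead appeal to the cohomological boundedness of $\mathcal{DM}^-_{eff}(k,\mathbb{Z}/2)$, namely that every object has cohomology sheaves eventually vanishing in large degree, so any $X\cong X[1]$ has $\mathcal{H}^n(X)\cong\mathcal{H}^{n+1}(X)$ for all $n$, forcing all $\mathcal{H}^n(X)$ to vanish and hence $X=0$. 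Applied to $\M\otimes{\mathfrak X}_\alpha$, this finishes (2).
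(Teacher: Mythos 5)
Your proof is correct, but it takes a genuinely different route from the paper's. The paper argues entirely formally inside ${\mathcal DM}_{eff}^-(k,\Z/2)$, using only that ${\mathfrak X}_{\alpha}$ is a projector (${\mathfrak X}_{\alpha}\otimes{\mathfrak X}_{\alpha}\cong{\mathfrak X}_{\alpha}$) together with the triangle \eqref{eqn:star}: tensoring \eqref{eqn:star} and the defining triangle of $\N$ with ${\mathfrak X}_{\alpha}$, and using that ${\mathfrak X}_{\alpha}\rightarrow M_{\alpha}$ factors through $T$, gives 1) and 2); 3) is formal from these; 4) and 5) follow by tensoring the defining triangles of $\n$ and $\M$ with $\N$ and identifying the resulting first maps with the canonical ones. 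You instead go through the quadratic extension: writing $M_{\alpha}\otimes X\cong p_{\#}p^{*}X$ for $p:Spec(E)\rightarrow Spec(k)$, you reduce every statement about $M_{\alpha}\otimes(-)$ to the splitting over $E$ (${\mathfrak X}_{\alpha}|_E\cong\N|_E\cong\n|_E\cong T$, $\M|_E=0$), and your pivotal vanishing $M_{\alpha}\otimes\M=0$, which has no counterpart in the paper, cleanly yields 3), 4) and 5) (your reduction of 2) to $\M\otimes{\mathfrak X}_{\alpha}=0$ also correctly establishes that the isomorphism is induced by the stated map). What your approach buys is a more geometric picture and the useful auxiliary vanishings $M_{\alpha}\otimes\M=0$ and $\M\otimes{\mathfrak X}_{\alpha}=0$; what it costs is visible in 2), where pure triangle manipulation only gives $\M\otimes{\mathfrak X}_{\alpha}\cong(\M\otimes{\mathfrak X}_{\alpha})[1]$ and you must invoke boundedness of cohomology sheaves in ${\mathcal DM}_{eff}^-$ to conclude vanishing — a correct extra ingredient that the paper's derivation of 2) directly from \eqref{eqn:star} avoids — and it relies on the projection formula for $p_{\#}$, which the paper never needs. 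Two assertions you use without proof are true but deserve a line each: the identification of the restrictions of the Rost morphisms over $E$ (diagonal and fold, whence $(\N\rightarrow T)|_E$ is an isomorphism and $\M|_E=0$), and $Hom(\n,M_{\alpha})\cong\Z/2$ in 4), which follows from $End(M_{\alpha})\cong\Z/2\oplus\Z/2$, $Hom(T,M_{\alpha}[1])=0$ and the fact that the composite $M_{\alpha}\rightarrow T\rightarrow M_{\alpha}$ equals $1+\sigma\neq 0$.
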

\begin{proof}
1) Since ${\mathfrak X}_{\alpha}$ is a projector in ${\mathcal DM}_{eff}^-(k,\Z/2)$ we have that ${\mathfrak X}_{\alpha} \otimes {\mathfrak X}_{\alpha} \cong {\mathfrak X}_{\alpha}$. Hence, by tensoring with ${\mathfrak X}_{\alpha}$ the distinguished triangle 
$$M_{\alpha} \rightarrow {\mathfrak X}_{\alpha} \rightarrow {\mathfrak X}_{\alpha}[1] \rightarrow M_{\alpha}[1]$$
we obtain that $M_{\alpha} \otimes {\mathfrak X}_{\alpha} \cong M_{\alpha}$.

2) Therefore, by tensoring with ${\mathfrak X}_{\alpha}$ the distinguished triangle
$$T \rightarrow M_{\alpha} \rightarrow \N \rightarrow T[1]$$
and by recalling that ${\mathfrak X}_{\alpha} \rightarrow M_{\alpha}$ from \eqref{eqn:star} factors through $T$ we get that $\N \otimes {\mathfrak X}_{\alpha} \cong {\mathfrak X}_{\alpha}$. 

3) It follows formally from 1) and 2).

4) On the other hand, by tensoring with $\N$ the distinguished triangle
$$\n \rightarrow M_{\alpha} \rightarrow T \rightarrow \n[1]$$
and by noticing that $(M_{\alpha} \rightarrow T) \otimes \N$ coincides with $M_{\alpha} \rightarrow \N$ we obtain that $\n \otimes \N \cong T$.

5) Finally, by tensoring with $\N$ the distinguished triangle
$$T \rightarrow \n \rightarrow \M \rightarrow T[1]$$
and by noticing that $(T \rightarrow \n) \otimes \N$ coincides with $\N \rightarrow T$ we have that $\M \otimes \N \cong \M[1]$. 
\end{proof}

From the previous proposition we immediately deduce the following lemma.

\begin{lem}\label{Na}
In ${\mathcal DM}_{eff}^-(k,\Z/2)$ for any $n \in \mathbb{N}$ there are the following distinguished triangles:\\
1) $\Nnm \rightarrow M_{\alpha} \rightarrow \Nn \rightarrow \Nnm[1]$;\\
2) $\M[n-1] \rightarrow \Nn \rightarrow \Nnm \rightarrow \M[n]$.\\
Here, $M_{\alpha} \rightarrow \Nn$ and $\Nn \rightarrow \Nnm$ are the unique non-zero morphisms between the respective objects.
\end{lem}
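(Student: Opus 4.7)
The plan is to derive both distinguished triangles by tensoring known distinguished triangles with $\Nnm$ and invoking the tensor product identities from Proposition \ref{12345}.

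First I would establish, by a straightforward induction on $n$, the two auxiliary isomorphisms $M_{\alpha} \otimes \Nnm \cong M_{\alpha}$ and $\M \otimes \Nnm \cong \M[n-1]$ in ${\mathcal DM}_{eff}^-(k,\Z/2)$. These follow from iteratively applying parts $3)$ and $5)$ of Proposition \ref{12345} respectively, the base case $n=1$ being trivial and the inductive step producing the required shift of $\M$ from the fact that each additional factor of $\N$ shifts $\M$ by one via $\M \otimes (\N \rightarrow T[1])$.

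Triangle $1)$ is then obtained by tensoring the defining distinguished triangle $T \rightarrow M_{\alpha} \rightarrow \N \rightarrow T[1]$ of $\N$ with $\Nnm$: combining the unit axiom $T \otimes \Nnm = \Nnm$ with the isomorphism $M_{\alpha} \otimes \Nnm \cong M_{\alpha}$ just established, one lands precisely on $\Nnm \rightarrow M_{\alpha} \rightarrow \Nn \rightarrow \Nnm[1]$. For triangle $2)$, the key input is the distinguished triangle $\M \rightarrow \N \rightarrow T \rightarrow \M[1]$ coming from the identification $\M \cong Cone(\N \rightarrow T)[-1]$ recalled at the start of Section $5$, where $\N \rightarrow T$ is the unique extension across $\N$ of $M_{\alpha} \rightarrow T$. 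Tensoring this triangle with $\Nnm$ and applying $\M \otimes \Nnm \cong \M[n-1]$ yields exactly $\M[n-1] \rightarrow \Nn \rightarrow \Nnm \rightarrow \M[n]$.

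Finally, the claim that $M_{\alpha} \rightarrow \Nn$ and $\Nn \rightarrow \Nnm$ are the unique non-zero morphisms between the respective objects should follow by inductively computing the relevant $Hom$ groups from the long exact sequences associated to triangle $1)$, using the vanishing $Hom(T,T[j]) = 0$ for $j \neq 0$ and the fact that each morphism in the triangles is constructed as the image of a generator of a known $\Z/2$-valued $Hom$-group. The main obstacle I anticipate sits precisely here: one must control the connecting homomorphisms in those long exact sequences to verify that $Hom(M_{\alpha},\Nn)$ and $Hom(\Nn,\Nnm)$ are genuinely of order two, rather than being enlarged by an extension contribution coming from $End(M_{\alpha})$ or from the cohomology of $\M$.
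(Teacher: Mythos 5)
Your proposal is correct and takes essentially the same route as the paper: both triangles are obtained by tensoring $T \rightarrow M_{\alpha} \rightarrow \N \rightarrow T[1]$ and $\M \rightarrow \N \rightarrow T \rightarrow \M[1]$ with $\Nnm$ and applying parts 3) and 5) of Proposition \ref{12345} iteratively. The uniqueness statement you flag as a potential obstacle is not argued in the paper's proof either, and it poses no real difficulty since $\N$ is invertible and $Hom(M_{\alpha},T)\cong \Z/2$, so the anticipated extension problem does not arise.
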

\begin{proof}
1) It follows immediately from $3)$ of Proposition \ref{12345} by tensoring the distinguished triangle
$$T \rightarrow M_{\alpha} \rightarrow \N \rightarrow T[1]$$
with the appropriate power of $\N$.

2) It follows immediately from $5)$ of Proposition \ref{12345} by tensoring the distinguished triangle
$$\M \rightarrow \N \rightarrow T \rightarrow \M[1]$$
with the appropriate power of $\N$.  
\end{proof}

At this point, we present the motivic cohomology of $\M$, which will be used in the main result of this section, namely the computation of the motivic cohomology of tensor powers of $\N$.

\begin{lem}\label{HNat}
There exists a cohomology class $\mu$ of bidegree $(0)[1]$ such that the motivic cohomology of $\M$ is given by
$$H(\M)={\frac {K^M(k)/2} {Ann(\{\alpha\})}} \cdot \mu$$
So, the motivic cohomology of $\M$ is concentrated on a single diagonal. 
\end{lem}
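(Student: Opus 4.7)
The plan is to compute $H(\M)$ using the distinguished triangle $T \to \n \to \M \to T[1]$ together with the cohomology of $M_\alpha = M(\mathrm{Spec}\,E)$, which by Voevodsky's theorem equals $K^M(E)/2[\tau_E]$.

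First I would compute $H(\n)$ via the triangle $\n \to M_\alpha \to T \to \n[1]$. The associated long exact sequence
$$\cdots \to H^{p,q}(T) \xrightarrow{\mathrm{res}} H^{p,q}(M_\alpha) \to H^{p,q}(\n) \to H^{p+1,q}(T) \xrightarrow{\mathrm{res}} \cdots$$
presents $H(\n)$ as an extension built out of $\mathrm{coker}(\mathrm{res})$ and $\ker(\mathrm{res})$ for the restriction map $K^M(k)/2 \to K^M(E)/2$ on motivic cohomology. By the Milnor conjecture (equivalently, the Arason exact sequence for quadratic extensions), $\ker(\mathrm{res}\colon K^M_p(k)/2 \to K^M_p(E)/2) = \{\alpha\} K^M_{p-1}(k)/2 \cong K^M_{p-1}(k)/2 / \mathrm{Ann}(\{\alpha\})$.

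Next I would feed this into the long exact sequence from $T \to \n \to \M \to T[1]$,
$$\cdots \to H^{p-1,q}(T) \to H^{p,q}(\M) \to H^{p,q}(\n) \xrightarrow{\varphi} H^{p,q}(T) \to H^{p+1,q}(\M) \to \cdots .$$
The main work is to understand $\varphi$, induced by the unique lift $T \to \n$ of the unit $T \to M_\alpha$. Since $T \to \n \to M_\alpha$ equals the unit, $\varphi$ factors the transfer: the composition $H(M_\alpha) \to H(\n) \xrightarrow{\varphi} H(T)$ is $\mathrm{tr}_{E/k}$. Using $\mathrm{Im}(\mathrm{tr}_{E/k}) = \mathrm{Ann}(\{\alpha\})$ (a standard fact for quadratic extensions) for the $\mathrm{coker}(\mathrm{res})$ component, and the octahedron axiom applied to $T \to \n \to M_\alpha$ (which interlocks with the other defining triangle $\M \to \N \to T \to \M[1]$ of $\M$) for the $\ker(\mathrm{res})$ component, a careful diagram chase shows that $H^{p,q}(\M) = 0$ unless $p = q+1$, in which case $H^{q+1,q}(\M) \cong K^M_q(k)/2 / \mathrm{Ann}(\{\alpha\})$. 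The class $\mu \in H^{1,0}(\M)$ is then defined as the image of $1 \in H^{0,0}(T) \cong \Z/2$ under the connecting map $H^{0,0}(T) \to H^{1,0}(\M)$, and the $H$-module structure makes multiplication by $\mu$ the claimed isomorphism; concentration on a single diagonal forces $\tau \mu = 0$ automatically.

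The principal obstacle is the description of $\varphi$ on the $\ker(\mathrm{res})$ component of $H(\n)$, which does not come from $H(M_\alpha)$ via the cone map $\n \to M_\alpha$ and so is not directly captured by the transfer factorization. This must be handled via the octahedron interlock with the triangle $\M \to \N \to T$, using the cohomology of $\N$ (dually computed via the transfer) and the Hilbert 90 identifications $\ker(\mathrm{tr}_{E/k}) = \mathrm{Im}(\mathrm{res})$ and $\mathrm{Im}(\mathrm{tr}_{E/k}) = \mathrm{Ann}(\{\alpha\})$ in all degrees.
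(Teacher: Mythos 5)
Your overall strategy (work directly with $M_{\alpha}=M(Spec(E))$, the triangles defining $\n$, $\N$, $\M$, and the OVV restriction/transfer exact sequence) is genuinely different from the paper's, and parts of it do work: the transfer factorization $H(M_{\alpha})\to H(\n)\xrightarrow{\varphi} H$ being $\mathrm{tr}_{E/k}$ is correct, and on the diagonal $*=*'$ it already gives $H^{q+1,q}(\M)\cong K^M_q(k)/2\,/\,Ann(\{\alpha\})$ together with your class $\mu$. The genuine gap is exactly at the step you call the principal obstacle, and your proposed resolution does not close it. The vanishing of $H^{*,*'}(\M)$ off the diagonal requires the induced map $\ker(\mathrm{res})_{p+1}\to K^M_p(k)/2\,/\,Ann(\{\alpha\})$ (the effect of $\varphi$ on the subquotient of $H(\n)$ \emph{not} in the image of $H(M_{\alpha})$) to be bijective. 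The octahedron interlock you invoke only produces the compatibilities $\varphi\circ c^*=\mathrm{tr}$, $a^*d^*=c^*b^*$ and $d^*\partial_{\N}=\partial_{\M}$ (with $c\colon\n\to M_{\alpha}$, $b\colon M_{\alpha}\to\N$, $a\colon\n\to\M$, $d\colon\M\to\N$); these constrain $\varphi$ only on $\mathrm{im}(c^*)$ and relate the unknown part of $\varphi$ to the equally unknown $d^*$ and $H(\M)$. Likewise, computing $H(\N)$ from $T\to M_{\alpha}\to\N$ determines it only additively, and the map $H\to H(\N)$ is pinned down by $b^*$ only modulo $\ker(b^*)$, which is again exactly the ambiguous component. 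So the ``careful diagram chase'' is not a formal consequence of your inputs: what is missing is a multiplicative statement, essentially the relation $\tau\mu=\{\alpha\}$ (equivalently, the non-split $H$-module structure on $H(\N)$ or $H({\mathfrak X}_{\alpha})$), which Hilbert 90 for $K^M/2$ alone does not give.

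For comparison, the paper avoids this issue entirely: it applies the octahedron axiom to the triangle $M_{\alpha}\to{\mathfrak X}_{\alpha}\to{\mathfrak X}_{\alpha}[1]$ to produce a triangle $\M[-1]\to\widetilde{{\mathfrak X}_{\alpha}}[1]\to\widetilde{{\mathfrak X}_{\alpha}}\to\M$, and then quotes the known computation $H(\widetilde{{\mathfrak X}_{\alpha}})=\Z/2[\mu]\cdot\mu\otimes K^M(k)/2\,/\,Ann(\{\alpha\})$ (OVV, Yagita), in which the map $\widetilde{{\mathfrak X}_{\alpha}}\to\widetilde{{\mathfrak X}_{\alpha}}[1]$ acts by $\mu^j\mapsto\mu^{j-1}$; the off-diagonal cancellation is then immediate. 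In other words, the nonformal input your chase needs is precisely the $\mu$-structure of $H(\widetilde{{\mathfrak X}_{\alpha}})$ (or, later in the paper, the relation $\tau\mu_i=\{\alpha\}\mu_{i-1}$ in $H(\Nn)$, proved there by comparison with $H({\mathfrak X}_{\alpha})$ and a four lemma argument). To repair your proof you should either import that computation, or prove the relation $\tau\mu=\{\alpha\}$ in $H(\N)$ independently (e.g.\ via the comparison map to $H({\mathfrak X}_{\alpha})$, or using $\n\otimes\N\cong T$ and the splitting of $M_{\alpha}\otimes M_{\alpha}$), before the long exact sequence for $T\to\n\to\M$ can yield the stated concentration on a single diagonal.
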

\begin{proof}
After applying the octahedron axiom twice to the distinguished triangle
$$M_{\alpha} \rightarrow {\mathfrak X}_{\alpha} \rightarrow {\mathfrak X}_{\alpha}[1] \rightarrow M_{\alpha}[1]$$
we get the distinguished triangle
$$\M[-1] \rightarrow \widetilde{{\mathfrak X}_{\alpha}}[1] \rightarrow \widetilde{{\mathfrak X}_{\alpha}} \rightarrow \M$$
where $\widetilde{{\mathfrak X}_{\alpha}}$ is $Cone({\mathfrak X}_{\alpha} \rightarrow T)[-1]$.

The motivic cohomology of $\widetilde{{\mathfrak X}_{\alpha}}$ has been computed in the original version of \cite{OVV} and \cite{Y}. It is described by
$$H(\widetilde{{\mathfrak X}_{\alpha}})=\Z/2[\mu] \cdot \mu \otimes {\frac {K^M(k)/2} {Ann(\{\alpha\})}}$$
Therefore, by the long exact sequence in motivic cohomology induced by the previous distinguished triangle and by recalling that $H^{*,*'}(\widetilde{{\mathfrak X}_{\alpha}}) \rightarrow H^{*-1,*'}(\widetilde{{\mathfrak X}_{\alpha}})$ sends $\mu^j$ to $\mu^{j-1}$ we get the description of $H(\M)$. 
\end{proof}

We are now ready to compute the motivic cohomology of any tensor power of $\N$. This result will be essential in the next section for the proof of the main result.

\begin{prop}\label{HNa}
For any $n \in \mathbb{N}$ there exist cohomology classes $\mu_i$ of bidegree $(0)[i]$ for $1 \leq i \leq n$ such that the motivic cohomology of the $\emph{n}$th tensor power of $\N$ as an $H$-module is given by
$$H(\Nn)=H \oplus \bigoplus\limits_{i=1}^n {\frac {K^M(k)/2} {Ann(\{\alpha\})}} \cdot \mu_i$$
where the $H$-module structure is described by the relations $\tau \mu_i=\{\alpha\}\mu_{i-1}$ ($\mu_0=1$ by convention).
\end{prop}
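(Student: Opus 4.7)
The plan is to induct on $n$. The base case $n=1$ follows from the long exact sequence for the distinguished triangle $\N \to T \to \M[1] \to \N[1]$ (i.e.\ $\M = Cone(\N \to T)[-1]$). Combining Lemma \ref{HNat}, which places $H(\M)$ on the single diagonal $(w)[w+1]$, with the support of $H$ on $(w)[d]$ with $d \leq w$, all potentially non-zero connecting maps land in vanishing groups, and the LES collapses to give $H(\N) = H \oplus \frac{K^M(k)/2}{Ann(\{\alpha\})} \cdot \mu_1$, where $\mu_1 \in H^{1,0}(\N)$ is the unique lift of $\mu$. The relation $\tau\mu_1 = \{\alpha\}$ drops out of the LES at bidegree $(1)[1]$, where the isomorphism $H^{1,1}(T) \cong H^{1,1}(\N)$ pairs $\{\alpha\}$ with $\tau\mu_1$.

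For the inductive step I would apply the LES to the triangle $\M[n-1] \to \Nn \to \Nnm \to \M[n]$ of Lemma \ref{Na}(2). By induction, $H(\Nnm)$ is supported in bidegrees $(w)[d]$ with $d-w \leq n-1$, so $H^{q+n+1,q}(\Nnm) = 0$; since $H(\M[n-1])$ is concentrated on the diagonal $(w)[w+n]$, both potentially non-zero connecting homomorphisms in the LES land in $H^{q+n+1,q}(\Nnm)$ and hence vanish. This produces a short exact sequence
$$0 \to H(\Nnm) \to H(\Nn) \to H(\M[n-1]) \to 0.$$
I would then define $\mu_n \in H^{n,0}(\Nn)$ as the lift of the generator of $H^{n,0}(\M[n-1])$; uniqueness of the lift follows from $H^{n,0}(\Nnm) = 0$ by the same bidegree bound. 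The additive decomposition of $H(\Nn)$ then falls out, and the annihilator of $\mu_n$ is forced to be $Ann(\{\alpha\})$ by tracking the images of the $\xi \mu_n$ under the surjection onto $H(\M[n-1])$ (noting that any kernel term would have to live in $H(\Nnm)$ in a bidegree where that module is trivial).

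For the module relation $\tau\mu_n = \{\alpha\}\mu_{n-1}$, I would use that $\tau$ acts as zero on $H(\M[n-1])$ (because $H(\M)$ is concentrated on a single diagonal while $\tau$ has bidegree $(1)[0]$, which moves classes off that diagonal). Hence $\tau\mu_n$ is the pullback of a unique class from $H(\Nnm)$, which for bidegree $(1)[n]$ reasons must equal $c \cdot \mu_{n-1}$ for a unique $c \in K^M_1(k)/2$ modulo $Ann(\{\alpha\})$. To identify $c = \{\alpha\}$, I would show that $\mu_n$ coincides with the external product $\mu_1 \cdot \mu_{n-1}$ under $\Nn \cong \N \otimes \Nnm$: since tensoring $\M \to \N \to T \to \M[1]$ with $\Nnm$ yields exactly the triangle of Lemma \ref{Na}(2) (using Proposition \ref{12345}(5) that $\M \otimes \N \cong \M[1]$), the pullback of $\mu_1 \cdot \mu_{n-1}$ to $H(\M[n-1])$ is forced to be the generator, so by uniqueness of the lift $\mu_n = \mu_1 \cdot \mu_{n-1}$. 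Then $H$-linearity of the external product gives $\tau\mu_n = (\tau\mu_1)\cdot\mu_{n-1} = \{\alpha\}\cdot\mu_{n-1}$. The trickiest step is the last identification, which rests on the precise compatibility between the tensor product structure and the connecting map of Lemma \ref{Na}(2), together with the base-case normalisation $\tau\mu_1 = \{\alpha\}$, which in turn traces back to the defining property of $\mu$ in Lemma \ref{HNat}.
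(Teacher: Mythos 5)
Your additive computation (the base case from the triangle $\M \rightarrow \N \rightarrow T \rightarrow \M[1]$ and the inductive step from Lemma \ref{Na}(2)) agrees with the paper, but the normalisation $\tau\mu_1=\{\alpha\}$ does \emph{not} ``drop out of the LES'': at bidegree $(1)[1]$ the exact sequence only shows that $H^{1,1}\rightarrow H^{1,1}(\N)$ is an isomorphism, hence that $\tau\mu_1$ is the pullback of a unique class $c\in K^M_1(k)/2$; it gives no means of deciding whether $c=\{\alpha\}$, $c=0$, or any other symbol. Nor can Lemma \ref{HNat} supply this, since $\tau$ acts by zero on $H(\M)$ (its cohomology sits on a single diagonal) --- the value of $c$ is exactly the module-extension datum that the long exact sequence leaves undetermined, and it is the arithmetic heart of the proposition. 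The paper settles it by tensoring $T\rightarrow M_{\alpha}\rightarrow \N\rightarrow T[1]$ with ${\mathfrak X}_{\alpha}$, deducing via a four-lemma argument that $H(\N)\rightarrow H({\mathfrak X}_{\alpha})$ is injective, and importing the known relation $\tau\mu=\{\alpha\}$ in $H({\mathfrak X}_{\alpha})$. (One could instead argue directly: restriction to $E$ kills $\mu_1$, so $c\in \ker(K^M_1(k)/2\rightarrow K^M_1(E)/2)=\{0,\{\alpha\}\}$, and $c\neq 0$ because $\mu_1$ is the class of the connecting morphism $\N\rightarrow T[1]$ and $\tau$ cannot factor through the transfer $T\rightarrow M_{\alpha}$, the composite $\mathrm{cor}\circ\mathrm{res}$ being multiplication by $2=0$. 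But some such input must be given; as written this step is a genuine gap.)

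In the inductive step, the assertion that the pullback of $\mu_1\cdot\mu_{n-1}$ to $H^{n,0}(\M[n-1])$ is ``forced to be the generator'' is also unjustified: a priori the external product of $\mu\in H^{1,0}(\M)$ with $\mu_{n-1}$ could vanish in $H^{n,0}(\M\otimes \Nnm)$, in which case $\mu_1\mu_{n-1}$ would lift from $H^{n,0}(\Nnm)=0$ and be zero, and nothing you wrote excludes this. The claim can be repaired by strengthening the induction to $\mu_j=\mu_1^{\times j}$ with $\mu_1$ taken to be the connecting-map class $\N\rightarrow T[1]$, so that under the identification of Lemma \ref{Na}(2) with the tensored triangle the restriction is literally the image of the generator under the isomorphism $\M\otimes\Nnm\cong\M[n-1]$ of Proposition \ref{12345}(5). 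The paper avoids products altogether and argues more economically: the connecting homomorphism $H^{*-1,*'}(\Nnm)\rightarrow H^{*,*'}(\Nn)$ of the LES attached to Lemma \ref{Na}(1) is $H$-linear and hits $\mu_i$ on $\mu_{i-1}$ (surjectivity in the relevant bidegrees because $H^{i,0}(M_{\alpha})=0$ for $i>0$), whence $\tau\mu_n=\partial(\tau\mu_{n-1})=\{\alpha\}\partial(\mu_{n-2})=\{\alpha\}\mu_{n-1}$ follows from the induction hypothesis; you may want to adopt that route, which also bypasses the multiplicativity issues above.
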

\begin{proof}
We will proceed by induction on $n$. For $n=1$ the distinguished triangle 
$$\M \rightarrow \N \rightarrow T \rightarrow \M[1]$$
induces the following long exact sequence in motivic cohomology
$${\dots} \rightarrow H^{*-1,*'}(\M) \rightarrow H^{*,*'} \rightarrow H^{*,*'}(\N) \rightarrow H^{*,*'}(\M) \rightarrow {\dots}$$
From Lemma \ref{HNat} it follows that 
$$H^{*,*'}(\N)=
\begin{cases}
H^{*,*'}(\M), & *>*' \\
H^{*,*'}, & * \leq *'
\end{cases}
$$
which implies that
$$H(\N)=H \oplus {\frac {K^M(k)/2} {Ann(\{\alpha\})}} \cdot \mu$$
On the other hand, after tensoring with ${\mathfrak X}_{\alpha}$ the distinguished triangle 
$$T \rightarrow M_{\alpha} \rightarrow \N \rightarrow T[1]$$
we get a morphism of long exact sequences in motivic cohomology
$$
\xymatrix{
{\dots} \ar@{->}[r] & H^{*-1,*'}(M_{\alpha}) \ar@{->}[r] \ar@{->}[d] & H^{*-1,*'} \ar@{->}[r] \ar@{->}[d] & H^{*,*'}(\N) \ar@{->}[r] \ar@{->}[d] & H^{*,*'}(M_{\alpha}) \ar@{->}[r] \ar@{->}[d] & {\dots}\\
{\dots} \ar@{->}[r] & H^{*-1,*'}(M_{\alpha}) \ar@{->}[r] & H^{*-1,*'}({\mathfrak X}_{\alpha}) \ar@{->}[r] & H^{*,*'}({\mathfrak X}_{\alpha}) \ar@{->}[r] & H^{*,*'}(M_{\alpha}) \ar@{->}[r] & {\dots}
}
$$
By a four lemma argument we deduce that $H(\N) \rightarrow H({\mathfrak X}_{\alpha})$ is injective. Therefore, $\tau \mu=\{\alpha\}$ in $H(\N)$, since the same relation holds in $H({\mathfrak X}_{\alpha})$. That completes the induction basis. 

Now, suppose the statement holds for $n-1$. Then, by $2)$ of Lemma \ref{Na} we have the following long exact sequence in motivic cohomology
$${\dots} \rightarrow H^{*-n,*'}(\M) \rightarrow H^{*,*'}(\Nnm) \rightarrow H^{*,*'}(\Nn) \rightarrow H^{*-n+1,*'}(\M) \rightarrow {\dots}$$
From Lemma \ref{HNat} and by induction hypothesis we have
$$H^{*,*'}(\Nn)=
\begin{cases}
H^{*-n+1,*'}(\M), & *>*'+n-1 \\
H^{*,*'}(\Nnm), & * \leq *'+n-1
\end{cases}
$$
which implies that there exists $\mu_n$ in bidegree $(0)[n]$ such that
$$H(\Nn)=H(\Nnm) \oplus {\frac {K^M(k)/2} {Ann(\{\alpha\})}}\cdot \mu_n=H \oplus \bigoplus\limits_{i=1}^n {\frac {K^M(k)/2} {Ann(\{\alpha\})}}\cdot \mu_i$$
From $1)$ of Lemma \ref{Na} we have the following long exact sequence in motivic cohomology
$${\dots} \rightarrow H^{*-1,*'}(\Nnm) \rightarrow H^{*,*'}(\Nn) \rightarrow H^{*,*'}(M_{\alpha}) \rightarrow H^{*,*'}(\Nnm) \rightarrow {\dots}$$
that maps $\mu_{i-1} \in H^{i-1,0}(\Nnm)$ to $\mu_i \in H^{i,0}(\Nn)$ since $H^{i,0}(M_{\alpha})=0$ for $i > 0$. Hence, by induction hypothesis, $\tau \mu_i=\{\alpha\}\mu_{i-1}$ in $H(\Nn)$ and the proof is complete.  
\end{proof}

By $2)$ of Lemma \ref{12345} there is a chain of morphisms 
$${\mathfrak X}_{\alpha} \rightarrow {\dots} \rightarrow \Nn \rightarrow \Nnm \rightarrow {\dots} \rightarrow \N \rightarrow T$$ 
that induces in cohomology the chain of homomorphisms 
$$H \rightarrow H(\N) \rightarrow {\dots} \rightarrow H(\Nnm) \rightarrow H(\Nn) \rightarrow {\dots} \rightarrow H({\mathfrak X}_{\alpha})$$
which sends $\mu \in H(\N)$ to $\mu \in H({\mathfrak X}_{\alpha})$.

We now highlight an interesting relation between the invertible motive $\N$ and the projector ${\mathfrak X}_{\alpha}$.

\begin{prop}\label{RS}
The homomorphisms $H(\Nn) \rightarrow H({\mathfrak X}_{\alpha})$ are injective for all $n \in \mathbb{N}$. Moreover, $H({\mathfrak X}_{\alpha})=\varinjlim H(\Nn)$.
\end{prop}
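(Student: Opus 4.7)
The plan is to establish both claims together by comparing each $\Nn$ with ${\mathfrak X}_{\alpha}$ via a morphism of distinguished triangles, in the spirit of the four-lemma argument used for the case $n=1$ inside the proof of Proposition \ref{HNa}, and then upgrading injectivity to the colimit identification by a bidegree-wise analysis.

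For injectivity of $H(\Nn) \to H({\mathfrak X}_{\alpha})$ I would induct on $n$, the base case $n=1$ being proved in Proposition \ref{HNa}. For the inductive step, I tensor the defining triangle $T \to M_{\alpha} \to \N \to T[1]$ with $\Nnm$ and with ${\mathfrak X}_{\alpha}$ respectively; by parts $1)$, $2)$ and $3)$ of Proposition \ref{12345} this produces the triangle $\Nnm \to M_{\alpha} \to \Nn \to \Nnm[1]$ of Lemma \ref{Na} on one hand, and the triangle ${\mathfrak X}_{\alpha} \to M_{\alpha} \to {\mathfrak X}_{\alpha} \to {\mathfrak X}_{\alpha}[1]$ on the other. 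The canonical morphisms ${\mathfrak X}_{\alpha} \to \N^{\otimes k}$, arising from the isomorphism $\N^{\otimes k} \otimes {\mathfrak X}_{\alpha} \cong {\mathfrak X}_{\alpha}$ composed with $\N^{\otimes k} \otimes ({\mathfrak X}_{\alpha} \to T)$, together with the identity on $M_{\alpha}$, assemble into a morphism between these triangles. Passing to motivic cohomology yields a morphism of long exact sequences in which the two columns corresponding to $M_{\alpha}$ are identities and the columns corresponding to $H(\Nnm)$ are injective by the inductive hypothesis; applying the four-lemma for injectivity to four consecutive terms then yields that $H(\Nn) \to H({\mathfrak X}_{\alpha})$ is injective.

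For the direct limit statement, compatibility of the maps $H(\Nnm) \to H(\Nn) \to H({\mathfrak X}_{\alpha})$ together with the injectivity just proved gives an injection $\varinjlim_n H(\Nn) \hookrightarrow H({\mathfrak X}_{\alpha})$, so it remains to check surjectivity bidegree by bidegree. By Proposition \ref{HNa}, at bidegree $(*,*')$ with $d = *-*'$, the group $H^{*,*'}(\Nn)$ coincides with $H^{*,*'}$ if $d \leq 0$, with the cyclic module $((K^M(k)/2)/Ann(\{\alpha\})) \cdot \mu_d$ if $1 \leq d \leq n$, and vanishes otherwise. The analogous description of $H^{*,*'}({\mathfrak X}_{\alpha})$, obtainable from Lemma \ref{HNat} together with the distinguished triangle $\widetilde{{\mathfrak X}_{\alpha}} \to {\mathfrak X}_{\alpha} \to T \to \widetilde{{\mathfrak X}_{\alpha}}[1]$, takes the same form with $\mu^d$ in place of $\mu_d$. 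Since our map is $H$-linear and injective, at bidegree $(d,0)$ with $d \geq 1$ it must send $\mu_d$ to $\mu^d$, and by $H$-linearity it sends $x \cdot \mu_d$ to $x \cdot \mu^d$ for any $x \in K^M(k)/2$; hence for $n \geq d$ the map is an isomorphism at that bidegree, and the colimit identification follows. The main subtlety of this plan is verifying that the tensor-product construction of ${\mathfrak X}_{\alpha} \to \Nn$ agrees with the one implicit in the chain preceding the proposition, which can be deduced from uniqueness of the relevant non-zero morphisms in $\textnormal{Hom}$-groups (in the spirit of Lemma \ref{Na}).
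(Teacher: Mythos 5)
Your proposal is correct in substance, but it follows a genuinely different route from the paper's, which is a one-step multiplicative argument: the external product $H(\Nnm)\otimes H(\N)\rightarrow H(\Nn)$ sits in a commutative square over the cup product on $H({\mathfrak X}_{\alpha})$, whence the map of the statement sends $\mu_i$ to $\mu^i$ for all $i\leq n$, and both injectivity and the colimit identification are then read off from Proposition \ref{HNa} together with the known additive structure of $H({\mathfrak X}_{\alpha})$. You run the logic in the opposite direction: injectivity first, by propagating the four-lemma comparison of triangles from the $n=1$ case inside Proposition \ref{HNa} up the tower, and then $\mu_d\mapsto\mu^d$ as a consequence of injectivity in bidegree $H^{d,0}$ (both groups being $\Z/2\cdot\mu_d$, resp.\ $\Z/2\cdot\mu^d$) plus $H$-linearity, which gives surjectivity diagonal by diagonal and hence the colimit (for the diagonals $*\leq *'$ the same one-line argument applies: injectivity in bidegree $(0,0)$ forces $1\mapsto 1$). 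The paper's route buys brevity, since the multiplicative square pins down the images of the generators with no diagram chase; your route is purely additive and triangulated, but its cost is exactly the subtlety you flag: the ladder must genuinely commute. The cleanest fix is to tensor the fixed triangle $T\rightarrow M_{\alpha}\rightarrow\N\rightarrow T[1]$ with the single morphism ${\mathfrak X}_{\alpha}\rightarrow\Nnm$, which produces a morphism of triangles for free, and then to identify the vertical arrows with the expected ones via $Hom({\mathfrak X}_{\alpha},M_{\alpha})\cong Hom({\mathfrak X}_{\alpha}\otimes M_{\alpha},T)\cong\Z/2$, $Hom(M_{\alpha},\Nn)\cong\Z/2$ and $Hom({\mathfrak X}_{\alpha},\Nn)\cong Hom({\mathfrak X}_{\alpha},T)\cong\Z/2$ (using the self-duality of $M_{\alpha}$, $\n\otimes M_{\alpha}\cong M_{\alpha}$ and $\n\otimes{\mathfrak X}_{\alpha}\cong{\mathfrak X}_{\alpha}$), checking nonvanishing of the relevant composites in $H^{0,0}$ or via the inductive injectivity; one point to watch is the column at $M_{\alpha}$, where the vertical arrow must be an isomorphism and ``nonzero'' alone would not suffice, since $End(M_{\alpha})\cong(\Z/2)^{2}$ contains nonzero non-invertible elements --- taking it to be the identity and arguing through the $\Z/2$ Hom-groups of the two outer squares, as you implicitly do, is the correct way around this. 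With these verifications supplied, your argument is complete.
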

\begin{proof}
We have already noticed that $H(\N) \rightarrow H({\mathfrak X}_{\alpha})$ is injective and  maps $\mu = \mu_1$ to $\mu$. Now, suppose by induction that the homomorphism $H(\Nnm) \rightarrow H({\mathfrak X}_{\alpha})$ is injective. Notice that there is a commutative diagram
$$
\xymatrix{
H(\Nnm) \otimes H(\N) \ar@{->}[r] \ar@{->}[d] & H(\Nn) \ar@{->}[d]\\
H({\mathfrak X}_{\alpha}) \otimes H({\mathfrak X}_{\alpha}) \ar@{->}[r]^(0.6){\smile} & H({\mathfrak X}_{\alpha})
}
$$
where the bottom horizontal map is the usual cup product in $H({\mathfrak X}_{\alpha})$. It follows that the right vertical map sends $\mu_i$ to $\mu^i$ for any $i \leq n$. This completes the proof. 
\end{proof}

Later on we will need also the following description of the motivic cohomology of $\n$.

\begin{lem}\label{HMat}
The motivic cohomology of $\n$ is given by
$$H(\n)=Ann(\{\alpha\}) \oplus H\cdot \tau$$
\end{lem}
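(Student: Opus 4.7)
The plan is to exploit the triangle $T \to \n \to \M \to T[1]$ defining $\M$ together with the structure of $H(\M)$ from Lemma \ref{HNat}, which places the cohomology of $\M$ on a single diagonal (cohomological degree equal to weight plus one). The associated long exact sequence
$$\cdots \to H^{p-1,q} \xrightarrow{\partial} H^{p,q}(\M) \to H^{p,q}(\n) \to H^{p,q} \xrightarrow{\partial} H^{p+1,q}(\M) \to \cdots$$
then has $\partial$ vanish in every bidegree outside $p=q$ for purely degree reasons, so the entire computation reduces to analyzing one family of connecting maps.

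The crucial step is to identify the surviving $\partial \colon K^M_q(k)/2 \to K^M_q(k)/Ann(\{\alpha\})$ on the diagonal as the natural quotient projection. Using $H$-linearity of the connecting map this reduces to the case $q=0$, where it amounts to checking $\partial(1) = \mu$; equivalently, to verifying $H^{0,0}(\n) = 0$. This is immediate from the other triangle $\n \to M_{\alpha} \to T \to \n[1]$, since the restriction $H^{0,0} \to H^{0,0}(M_{\alpha})$ is an isomorphism of copies of $\Z/2$. Plugging back into the exact sequence then yields $H^{p,q}(\n) = 0$ for $p>q$, $H^{p,p}(\n) \cong Ann(\{\alpha\}) \cap K^M_p(k)/2$ on the diagonal, and a canonical isomorphism $H^{p,q}(\n) \xrightarrow{\sim} H^{p,q}$ for $p<q$ coming from the pullback $H(\n) \to H$ induced by $T \to \n$.

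To obtain the decomposition in the stated form I would define $\tau \in H^{0,1}(\n)$ to be the unique class mapping to the standard $\tau \in H^{0,1}$ under this isomorphism, and invoke $H$-linearity of the pullback to conclude that $\cdot\,\tau \colon H^{p,q-1} \to H^{p,q}(\n)$ is an isomorphism for every $p<q$. Hence the submodule $H \cdot \tau$ fills all bidegrees strictly above the diagonal while $Ann(\{\alpha\})$ accounts for the diagonal; since the two supports are disjoint the sum is automatically direct and exhausts $H(\n)$. The main obstacle is the normalization step in the middle paragraph, namely pinning down the diagonal boundary as the quotient projection; once this is settled the remainder of the argument is essentially bookkeeping.
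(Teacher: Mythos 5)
Your argument is correct, but it follows a different route from the paper. The paper applies the octahedron axiom to the triangle $M_{\alpha} \rightarrow {\mathfrak X}_{\alpha} \rightarrow {\mathfrak X}_{\alpha}[1]$ to produce ${\mathfrak X}_{\alpha} \rightarrow \n \rightarrow \widetilde{{\mathfrak X}_{\alpha}} \rightarrow {\mathfrak X}_{\alpha}[1]$ and then reads off $H(\n)$ from the known cohomologies of ${\mathfrak X}_{\alpha}$ and $\widetilde{{\mathfrak X}_{\alpha}}$ together with the fact that the map $H^{*-1,*'}({\mathfrak X}_{\alpha}) \rightarrow H^{*,*'}(\widetilde{{\mathfrak X}_{\alpha}})$ sends $\mu^{i-1}$ to $\mu^{i}$. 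You instead work with the defining triangle $T \rightarrow \n \rightarrow \M \rightarrow T[1]$, feed in Lemma \ref{HNat} as a black box, and reduce everything to identifying the diagonal connecting map $K^M_q(k)/2 \rightarrow K^M_q(k)/2\,/\,Ann(\{\alpha\})$ as the projection; your normalization via $H^{0,0}(\n)=0$ is sound, since the map $M_{\alpha} \rightarrow T$ is the unique non-zero one, hence the structure morphism of $Spec(E)$, and so induces an isomorphism $H^{0,0} \rightarrow H^{0,0}(M_{\alpha})=\Z/2$; the $H$-linearity step implicitly uses that $H(\M)$ is cyclic over $K^M(k)/2$ on the generator $\mu$, which is indeed part of the content of Lemma \ref{HNat}, and the absence of extension problems is automatic because the kernel and cokernel contributions occupy disjoint bidegrees. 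What your route buys is that, beyond Lemma \ref{HNat}, it stays entirely internal to the paper, avoiding a second direct appeal to the OVV/Yagita description of $H(\widetilde{{\mathfrak X}_{\alpha}})$ and the octahedron axiom, and it mirrors the paper's own computation of $H(\N)$ in Proposition \ref{HNa} (where the analogous normalization, $\tau\mu=\{\alpha\}$, is instead settled by a four-lemma comparison with ${\mathfrak X}_{\alpha}$); the paper's route is shorter once the cohomology of ${\mathfrak X}_{\alpha}$ and $\widetilde{{\mathfrak X}_{\alpha}}$ and the shift map between them are taken as known, and of course both arguments ultimately rest on the same external input, since Lemma \ref{HNat} is itself deduced from $H(\widetilde{{\mathfrak X}_{\alpha}})$.
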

\begin{proof}
After applying the octahedron axiom to the distinguished triangle
$$M_{\alpha} \rightarrow {\mathfrak X}_{\alpha} \rightarrow {\mathfrak X}_{\alpha}[1] \rightarrow M_{\alpha}[1]$$
we obtain 
$${\mathfrak X}_{\alpha} \rightarrow \n \rightarrow \widetilde{{\mathfrak X}_{\alpha}} \rightarrow {\mathfrak X}_{\alpha}[1]$$
which induces in motivic cohomology the following long exact sequence
$${\dots} \rightarrow H^{*-1,*'}({\mathfrak X}_{\alpha}) \rightarrow H^{*,*'}(\widetilde{{\mathfrak X}_{\alpha}}) \rightarrow H^{*,*'}(\n) \rightarrow H^{*,*'}({\mathfrak X}_{\alpha}) \rightarrow {\dots}$$
Hence, the result follows by noticing that $H^{*,*'}(\widetilde{{\mathfrak X}_{\alpha}})$ is the $* > *'$ part of $H^{*,*'}({\mathfrak X}_{\alpha})$, while $H^{*,*'}$ is the $* \leq *'$ part of it, and that $H^{*-1,*'}({\mathfrak X}_{\alpha}) \rightarrow H^{*,*'}(\widetilde{{\mathfrak X}_{\alpha}})$ sends $\mu^{i-1}$ to $\mu^i$. 
\end{proof}

\section{The motivic cohomology ring of $BU_n(E/k)$} 

Our goal in this section is to compute by using the techniques presented in section $3$ and $4$ the motivic cohomology of the Nisnevich classifying space of $U_n(E/k)$, the unitary group associated to the standard split hermitian form $h_n$ of the extension $E/k$.

At first, let us show some preliminary results which will be useful in the proof of the main theorem. 

\begin{prop}
The homogeneous variety $U_n(E/k)/U_{n-1}(E/k)$ is isomorphic to the affine quadric $A_{h_n}$ defined by the equation $\widetilde{h}_n=1$.
\end{prop}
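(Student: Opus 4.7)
\textit{Proof proposal.}

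The plan is to exhibit the isomorphism via the natural orbit map. Fix the first standard basis vector $e_1 \in E^n$. Viewing $E^n$ as a $2n$-dimensional $k$-vector space, $e_1$ is a $k$-point of the affine quadric $A_{h_n} = \{\widetilde{h}_n = 1\}$ since $h_n(e_1,e_1) = 1 = \widetilde{h}_n(e_1)$. I would first define the orbit morphism $\phi : U_n(E/k) \to A_{h_n}$ by $g \mapsto g \cdot e_1$. This lands in $A_{h_n}$ because elements of the unitary group preserve $h_n$, hence preserve the diagonal quadratic form $\widetilde{h}_n$.

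Next I would compute the stabilizer of $e_1$. An element $g \in U_n(E/k)$ fixing $e_1$ must preserve the $h_n$-orthogonal complement $(E \cdot e_1)^{\perp}$, which in the standard basis is spanned by $e_2,\dots,e_n$ and carries the hermitian form $\perp_{i=2}^n \la (-1)^{i-1} \ra = h_{n-1}$ (up to sign on the first entry, which does not affect the isomorphism class since we can rescale; more carefully, the restriction is a split rank $n-1$ hermitian form, hence isomorphic to $h_{n-1}$). Thus $\mathrm{Stab}_{U_n(E/k)}(e_1) = U_{n-1}(E/k)$, and $\phi$ factors through a monomorphism $\bar\phi : U_n(E/k)/U_{n-1}(E/k) \to A_{h_n}$.

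For surjectivity on points I would invoke the Witt extension theorem for hermitian forms (see \cite[Chapter 1]{Ka}): any isometry between anisotropic vectors in a regular hermitian space extends to an isometry of the whole space. Applied to the subspaces $E \cdot e_1$ and $E \cdot v$ for any $k'$-point $v$ of $A_{h_n}$ (over any field extension $k'/k$), this shows $U_n(E/k)$ acts transitively on $A_{h_n}$ at the level of points valued in any field extension, so $\bar\phi$ is surjective on geometric points.

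Finally, I would check that $\bar\phi$ is an isomorphism of schemes. A dimension count gives $\dim U_n(E/k) - \dim U_{n-1}(E/k) = n^2 - (n-1)^2 = 2n-1 = \dim A_{h_n}$. Since $\bar\phi$ is a $U_n(E/k)$-equivariant morphism between smooth $k$-varieties on which $U_n(E/k)$ acts transitively (on the target) and both have the same dimension, its differential at the identity coset is an injection between tangent spaces of the same dimension, hence an isomorphism; by homogeneity $\bar\phi$ is étale everywhere, and a bijective étale morphism is an isomorphism. The main point to verify is the stabilizer calculation and the invocation of Witt extension in the hermitian setting; both are standard, so I expect the argument to go through without complications.
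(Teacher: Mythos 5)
Your proposal is correct and follows essentially the same route as the paper: identify $A_{h_n}$ with the orbit of a vector of hermitian length $1$, prove transitivity of the $U_n(E/k)$-action (the paper cites the reflection argument from Scharlau's proof of Witt's theorem, which is the same fact as the Witt extension you invoke), and identify the stabilizer with $U_{n-1}(E/k)$. The extra scheme-theoretic details you supply (dimension count, \'etale plus bijective implies isomorphism) are fine and are simply left implicit in the paper's proof.
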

\begin{proof}
Let $V$ be an $n$-dimensional $E$-vector space and let $A_{h_n}$ be the subset of $V$ defined by the equation $h_n=1$. Then, $V$ can be considered as a $2n$-dimensional $k$-vector space in which $A_{h_n}$ is the affine quadric defined by the equation $\widetilde{h}_n=1$. The action of $U_n(E/k)$ on $A_{h_n}$ is transitive since for any two vectors of $A_{h_n}$ there exists the product of at most two reflections which sends one to the other (see the proof of \cite[Theorem 9.5]{S}). Moreover, the isotropy group of the vector $(1,0,\dots,0)$ is isomorphic to $U_{n-1}(E/k)$. This implies the desired result. 
\end{proof}

At this point, in order to apply Proposition \ref{Thom1} to the unitary case, we need to study the motive of the affine quadric $A_{h_n}$. 

\begin{prop}\label{MAQ}
The motive in ${\mathcal DM}_{eff}^-(k,\Z/2)$ of the affine quadric $A_{h_n}$ is given by
$$M(A_{h_n})=
\begin{cases}
T \oplus \N(n)[2n-1], & n \: odd \\
T \oplus T(n)[2n-1], & n \: even
\end{cases}
$$
\end{prop}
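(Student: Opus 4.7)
The plan is to split by parity of $n$. When $n$ is even, $q_n\cong(n/2)\HF$ is hyperbolic, hence so is $\widetilde{h}_n=\lva\alpha\rva\otimes q_n$, which becomes isomorphic to $q_{2n}$; the formula $M(A_{h_n})\cong T\oplus T(n)[2n-1]$ then follows directly from \cite[Proposition 3.1.3]{SV}. When $n$ is odd we have $q_n\cong\la 1\ra\perp\tfrac{n-1}{2}\HF$, so $\widetilde{h}_n\cong\lva\alpha\rva\perp(n-1)\HF$, and I would proceed by induction on the number of hyperbolic planes added to $\lva\alpha\rva$.

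The engine of the induction is a reduction lemma: for any non-degenerate quadratic form $q$ over $k$ with $A_q(k)\neq\emptyset$, there is an isomorphism $\widetilde{M}(A_{q\perp\HF})\cong\widetilde{M}(A_q)(1)[2]$. To prove it, I write $A_{q\perp\HF}=\{q(v)+xy=1\}$ and project to the $x$-coordinate. The open $U=\{x\neq 0\}$ is isomorphic, via the graph of $y$, to $\mathbb{A}^{\dim q}\times\mathbb{G}_m$, and the closed complement $Z=\{x=0\}$ is $A_q\times\mathbb{A}^1$ of codimension $1$ with trivial normal bundle; purity then yields
$$T\oplus T(1)[1]\to M(A_{q\perp\HF})\to M(A_q)(1)[2]\to T[1]\oplus T(1)[2].$$
Rational points on $A_q$ and $A_{q\perp\HF}$ (the latter sitting inside $U$) split off $T$ from the middle terms compatibly. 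The remaining components of the connecting morphism live in $\mathrm{Hom}$-groups of the form $H^{-1,-1}(k,\Z/2)$, $H^{-1,-1}(\widetilde{M}(A_q),\Z/2)$ or $H^{0,0}(\widetilde{M}(A_q),\Z/2)$, all of which are zero; the lone surviving component $T(1)[2]\to T(1)[2]$ is the identity by functoriality of the Gysin triangle with respect to the projection $x\colon A_{q\perp\HF}\to\mathbb{A}^1$ to the fundamental Gysin triangle for $\mathbb{G}_m\hookrightarrow\mathbb{A}^1\hookleftarrow\{0\}$. Splitting off this identity pair from the triangle produces the required isomorphism.

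The base case is $n=1$: the affine conic $A_{\lva\alpha\rva}=\{y^2-\alpha x^2=1\}$ has projective closure in $\mathbb{P}^2$ the smooth conic $C\colon Y^2-\alpha X^2=Z^2$, which contains the rational point $(0:1:1)$ and is therefore isomorphic to $\mathbb{P}^1$; its intersection with $\{Z=0\}$ is $\mathrm{Spec}(E)$. Hence $A_{\lva\alpha\rva}\cong\mathbb{P}^1\setminus\mathrm{Spec}(E)$, and the Gysin triangle reads
$$M(A_{\lva\alpha\rva})\to T\oplus T(1)[2]\to M_\alpha(1)[2]\to M(A_{\lva\alpha\rva})[1].$$
The rational point forces the $T$-component of the middle map to vanish, so it factors through the projection onto $T(1)[2]$ followed by a class in $\mathrm{Hom}(T,M_\alpha)=\Z/2$. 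If this class were trivial, the triangle would split as $M(A_{\lva\alpha\rva})\cong M(\mathbb{P}^1)\oplus M_\alpha(1)[1]$, which is incompatible with the identification $A_{\lva\alpha\rva}\times_k E\cong\mathbb{G}_m$ after base change; hence the class is the unit $T\to M_\alpha$. Applying the octahedron axiom to the factorisation $M(\mathbb{P}^1)\twoheadrightarrow T(1)[2]\to M_\alpha(1)[2]$, and observing that the resulting potential extension is trivial because $\mathrm{Hom}(\N(1)[2],T[2])=0$ (motivic cohomology in negative weight vanishes, as one sees from the defining triangle of $\N$), we identify $M(A_{\lva\alpha\rva})\cong T\oplus\N(1)[1]$.

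Iterating the reduction lemma $n-1$ times starting from $A_{\lva\alpha\rva}$ yields $\widetilde{M}(A_{h_n})\cong\widetilde{M}(A_{\lva\alpha\rva})(n-1)[2n-2]=\N(n)[2n-1]$ for odd $n$, finishing the argument. The main technical point is the identification of the $T(1)[2]\to T(1)[2]$ component of the Gysin boundary with the identity; while conceptually standard, this requires a careful naturality comparison with the local Gysin triangle for $\{0\}\hookrightarrow\mathbb{A}^1$. As a consistency check, base-changing the answer to $E$ collapses $\N$ to $T$ and recovers $T\oplus T(n)[2n-1]$ in both parities, in agreement with the fact that $(\widetilde{h}_n)|_E\cong q_{2n}$ is hyperbolic.
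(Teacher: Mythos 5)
Your proof is correct, and it reaches the statement by a route that is only partly the paper's. The skeleton agrees: identify $\widetilde{h}_n$ with $n\HF$ for $n$ even and with $\lva \alpha \rva \perp (n-1)\HF$ for $n$ odd, settle the base case $A_{\lva \alpha \rva}$ via the Gysin triangle of the conic $\mathbb{P}^1\setminus \mathrm{Spec}(E)$ (your analysis of the boundary map and the splitting from $\mathrm{Hom}(\N(1)[2],T[2])=0$ just spells out what the paper leaves implicit), and then shift by hyperbolic planes. The genuine difference is how the shift is justified: the paper simply quotes \cite[Lemma 34]{B} to get $\widetilde{M}(A_{h_n})\cong \widetilde{M}(A_{h_1})(n-1)[2n-2]$, whereas you re-prove the required special case (for $A_q$ with a rational point) by projecting to one coordinate of the added hyperbolic plane and splitting the resulting localization triangle; similarly, for $n$ even you cite the split computation \cite[Proposition 3.1.3]{SV} rather than redoing the projective-quadric Gysin triangle as the paper does. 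Your version buys a self-contained, elementary argument; the paper's citation buys brevity and a statement valid without rational-point or connectedness hypotheses. Two points in your reduction lemma deserve a word: the vanishing of the component $\widetilde{M}(A_q)(1)[2]\to T(1)[2]$ uses $\widetilde{H}^{0,0}(A_q)=0$, i.e.\ that $A_q$ is connected --- true in every case you use it, since $q$ contains $\lva \alpha \rva$ and has dimension at least $2$, but it should be stated; and the naturality of the Gysin triangle that pins down the $T(1)[2]\to T(1)[2]$ component as the identity is legitimate because the coordinate projection $x\colon A_{q\perp\HF}\to \mathbb{A}^1$ is a smooth morphism (flat with smooth fibres), hence transversal to $\{0\}$, so the comparison square of Gysin triangles commutes.
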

\begin{proof}
We start by noticing that the quadratic form 
$$\widetilde{h}_n=
\begin{cases}
\lva \alpha \rva \perp (n-1)\HF, & n \: odd \\
n\HF, & n \: even
\end{cases}
$$
For a quadratic form $q$ let us denote by $Q$ the projective quadric defined by $q=0$, by $Q'$ the projective quadric defined by $q=z^2$ and by $A$ the affine quadric defined by $q=1$. Then, we have in ${\mathcal DM}_{eff}^-(k,\Z/2)$ the following Gysin triangle
$$M(A) \rightarrow M(Q') \rightarrow M(Q)(1)[2] \rightarrow M(A)[1]$$

In the case $q=n\HF$ the previous triangle becomes
$$M(A) \rightarrow \bigoplus\limits_{i=0}^{2n-1} T(i)[2i] \rightarrow \bigoplus\limits_{i=1}^{2n-1} T(i)[2i] \oplus T(n)[2n] \rightarrow M(A)[1]$$
which implies that, for $n$ even, $M(A_{h_n})=M(A)=T \oplus T(n)[2n-1]$.

In the case $q=\lva \alpha \rva$ we have
$$M(A) \rightarrow T \oplus T(1)[2] \rightarrow M_{\alpha}(1)[2] \rightarrow M(A)[1]$$
from which it follows that $M(A_{h_1})=M(A)=T \oplus \N(1)[1]$.

The general case $n$ odd follows from \cite[Lemma 34]{B} . Namely, we have
$$\widetilde{M}(A_{h_n})=\widetilde{M}(A_{h_1})(n-1)[2n-2]=\N(n)[2n-1]$$
that implies $M(A_{h_n})=T \oplus \N(n)[2n-1]$. 
\end{proof}

Before going ahead with the main theorem of this section, we notice that $U_n(E/k)$-torsors over $Spec(k)$ are in one-to-one correspondence with hermitian forms associated to the quadratic extension $E/k$ or, which is the same, with quadratic forms over $k$ divisible by $\lva \alpha \rva$. The map from the set of $U_{n-1}(E/k)$-torsors to the set of $U_n(E/k)$-torsors sends a hermitian form $h$ to $h \perp \la (-1)^{n-1} \ra$ or, analogously, a quadratic form $q$ divisible by $\lva \alpha \rva$ to $q \perp (-1)^{n-1} \lva \alpha \rva$. Since Witt cancellation holds for quadratic forms, the previous remark assures that $U_{n-1}(E/k)$-torsors inject in $U_n(E/k)$-torsors over any field extension of $k$, which allows us to use Propositions \ref{BG1} and \ref{BG2} in the unitary case.

\begin{thm}\label{main}
For any $m,n \in \Z_{\geq 0}$ there exist cohomology classes $c_i$ of bidegree $(i)[2i]$ for $1 \leq i \leq n$ such that the motivic cohomology of ${\mathfrak X}_{\alpha} \otimes BU_n(E/k)$ and $\Nm \otimes BU_n(E/k)$ is described respectively by
$$H({\mathfrak X}_{\alpha} \otimes BU_n(E/k))=H({\mathfrak X}_{\alpha})[c_1,{\dots},c_n]$$
and
$$H(\Nm \otimes BU_n(E/k))=\bigoplus\limits_{i_1,{\dots},i_n \in \Z_{\geq 0}} H(\Nodd)\cdot c_1^{i_1}{\cdots}c_n^{i_n}$$
where the obvious homomorphisms of $H(BU_n(E/k))$-modules 
$$H(\Nm \otimes BU_n(E/k)) \rightarrow H({\mathfrak X}_{\alpha} \otimes BU_n(E/k))$$ 
are injective. Moreover, $H({\mathfrak X}_{\alpha} \otimes BU_n(E/k))=\varinjlim H(\Nm \otimes BU_n(E/k))$.
\end{thm}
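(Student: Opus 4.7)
The plan is to induct on $n$. The base case $n = 0$ is trivial since $BU_0(E/k) = \mathrm{Spec}(k)$, so both formulas reduce directly to Propositions \ref{HNa} and \ref{RS}. For the inductive step, I invoke the fibration $\pi: \widehat{B}U_{n-1}(E/k) \to BU_n(E/k)$. Propositions \ref{BG1} and \ref{BG2} identify $\widehat{B}U_{n-1} \cong BU_{n-1}$ in ${\mathcal H}_s(k)$, using that a rationally trivial hermitian form over any field extension is split. The fiber of $\pi$ is the affine quadric $A_{h_n} \cong U_n/U_{n-1}$, whose reduced motive, by Proposition \ref{MAQ}, is the invertible motive $T(n)[2n-1]$ when $n$ is even and $\N(n)[2n-1]$ when $n$ is odd. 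Proposition \ref{Thom1} then furnishes, after tensoring with $\Nm$ and pushing to motives over $\mathrm{Spec}(k)$, the distinguished triangle
$$M(BU_{n-1}) \otimes \Nm \to M(BU_n) \otimes \Nm \to \widetilde{M}(A_{h_n}) \otimes M(BU_n) \otimes \Nm[1] \to \cdots$$

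Writing $\epsilon_n = 0$ when $n$ is even and $\epsilon_n = 1$ when $n$ is odd, passing to motivic cohomology yields the long exact sequence
$$\cdots \to H^{p-2n, q-n}(N_>^{\otimes m+\epsilon_n} \otimes BU_n) \xrightarrow{\cdot c_n} H^{p,q}(\Nm \otimes BU_n) \xrightarrow{r} H^{p,q}(\Nm \otimes BU_{n-1}) \xrightarrow{\partial} H^{p+1-2n, q-n}(N_>^{\otimes m+\epsilon_n} \otimes BU_n) \to \cdots$$
in which I define the class $c_n \in H^{2n,n}(BU_n)$ as the image of $1$ under the first map taken at $m=0$; it represents an Euler-type class for the fibration. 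Proposition \ref{Thom2}, applied to the cartesian square relating the $(n-1)$- and $n$-stages, ensures the functoriality needed to lift each $c_i$ for $i < n$ from $H(BU_{n-1})$ to $H(BU_n)$ compatibly with restriction, while $c_n$ itself restricts to zero in $H(BU_{n-1})$.

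The key and most delicate step is to show $r$ is surjective in every bidegree, whence $\partial = 0$ throughout and the sequence splits into short exact sequences. By the inductive hypothesis, $H(\Nm \otimes BU_{n-1})$ is generated over the appropriate shifted coefficient modules by the monomials $c_1^{i_1} \cdots c_{n-1}^{i_{n-1}}$, and the functoriality above provides lifts of these generators to $H(\Nm \otimes BU_n)$, yielding the desired surjectivity. The resulting split short exact sequence, iterated in $i_n$, gives
$$H(\Nm \otimes BU_n) = \bigoplus_{i_n \geq 0} H(N_>^{\otimes m + \epsilon_n i_n} \otimes BU_{n-1}) \cdot c_n^{i_n},$$
which combined with the inductive formula for $BU_{n-1}$ recovers the claimed decomposition; the factor $\epsilon_n i_n$ precisely converts $\sum_{l \text{ odd}, l \leq n-1} i_l$ into $\sum_{l \text{ odd}, l \leq n} i_l$.

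The ${\mathfrak X}_{\alpha}$-version then follows by taking the direct limit in $m$, using the chain of morphisms $\cdots \to \Nmp \to \Nm \to \cdots \to T$ from Proposition \ref{12345}(2) together with Proposition \ref{RS}, which also supplies the required term-by-term injectivity of $H(\Nm \otimes BU_n) \hookrightarrow H({\mathfrak X}_{\alpha} \otimes BU_n)$. The main obstacle is verifying the functorial compatibility of the lifted $c_i$'s across inductive stages, since the parity dichotomy in $\widetilde{M}(A_{h_n})$ complicates the bookkeeping of $\N$-tensor powers; this is precisely the role of Proposition \ref{Thom2}.
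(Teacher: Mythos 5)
Your overall strategy coincides with the paper's (induction on $n$, the fibration $\widehat{B}U_{n-1}(E/k)\rightarrow BU_n(E/k)$, Propositions \ref{BG1}, \ref{BG2}, \ref{MAQ}, \ref{Thom1}, the parity-dependent Gysin sequence, $c_n=f^*(1)$), but the two most delicate points are not actually carried out. First, the surjectivity of the restriction $g^*$ on the $\Nm$-twisted level does not follow from lifting the monomials $c_1^{i_1}\cdots c_{n-1}^{i_{n-1}}$: the target $H(\Nm\otimes BU_{n-1}(E/k))$ is not a ring, and $g^*$ is not linear over the ``shifted coefficient modules'' $H(\Nodd)$, so hitting the monomials says nothing about hitting elements such as $\mu_i\cdot c_1^{i_1}\cdots c_{n-1}^{i_{n-1}}$. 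What is needed (and what the paper does) is: for $m=0$, $H(BU_{n-1}(E/k))$ is generated as an $H$-algebra by the $c_i$ \emph{and} the classes $\mu c_l$ for odd $l<n$ --- your proposal never mentions these extra generators --- and all of them lift uniquely by weight/degree vanishing (no cohomology in negative weights), so the ring map $g^*$ is onto; then, for general $m$, one uses that the sequence is one of $H(BU_n(E/k))$-modules (the remark after Proposition \ref{BG2}) and that $H(\Nm\otimes BU_{n-1}(E/k))$ is generated as an $H(BU_n(E/k))$-module by the classes $\mu_i$, $i\le m$, which again lift by degree reasons. Note also that Proposition \ref{Thom2} does not produce lifts of cohomology classes; it only gives functoriality of the Gysin triangle and is not used in this proof (it enters later, in Proposition \ref{comp}).

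Second, your treatment of the ${\mathfrak X}_{\alpha}$-version is circular/unjustified: the identification $H({\mathfrak X}_{\alpha}\otimes BU_n(E/k))=\varinjlim_m H(\Nm\otimes BU_n(E/k))$ and the injectivity of $H(\Nm\otimes BU_n(E/k))\rightarrow H({\mathfrak X}_{\alpha}\otimes BU_n(E/k))$ are part of the statement being proven and do not follow from Proposition \ref{RS}, which concerns only the point. The paper computes $H({\mathfrak X}_{\alpha}\otimes BU_n(E/k))$ first and directly --- here the parity dichotomy disappears because $\N\otimes{\mathfrak X}_{\alpha}\cong{\mathfrak X}_{\alpha}$ by Proposition \ref{12345}(2), so the same Gysin argument gives $H({\mathfrak X}_{\alpha})[c_1,\dots,c_n]$ --- and then establishes the injectivity by a four-lemma argument comparing the $\Nm$-short exact sequence with the ${\mathfrak X}_{\alpha}$-one, inducting on the square degree; only after both sides are known explicitly is the colimit statement read off. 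As written, your argument leaves both the injectivity and the colimit identification without proof, and without the ${\mathfrak X}_{\alpha}$-row you also lose the ambient ring in which the $\Nm$-decomposition is identified as a submodule.
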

\begin{proof}
We will proceed by induction on $n$. The induction basis follows immediately from noticing that $BU_0(E/k) \cong Spec(k)$ and by Proposition \ref{RS}.

Now, suppose the result holds for $n-1$. Then, since $\N \otimes {\mathfrak X}_{\alpha} \cong {\mathfrak X}_{\alpha}$ by $2)$ of Proposition \ref{12345} and applying Propositions \ref{Thom1}, \ref{BG1}, \ref{BG2} and \ref{MAQ} to the coherent morphism ${\mathfrak X}_{\alpha} \otimes \widehat{B}U_{n-1}(E/k) \rightarrow {\mathfrak X}_{\alpha} \otimes BU_n(E/k)$, we obtain the following long exact sequence in motivic cohomology
$${\dots} \rightarrow H^{*-1,*'}({\mathfrak X}_{\alpha} \otimes BU_{n-1}(E/k)) \xrightarrow{h^*} H^{*-2n,*'-n}({\mathfrak X}_{\alpha} \otimes BU_n(E/k)) \xrightarrow{f^*}$$
$$H^{*,*'}({\mathfrak X}_{\alpha} \otimes BU_n(E/k)) \xrightarrow{g^*} H^{*,*'}({\mathfrak X}_{\alpha} \otimes BU_{n-1}(E/k)) \rightarrow {\dots}$$
Note that, even after having replaced $H({\mathfrak X}_{\alpha} \otimes \widehat{B}U_{n-1}(E/k))$ with $H({\mathfrak X}_{\alpha} \otimes BU_{n-1}(E/k))$, this stays a sequence of $H({\mathfrak X}_{\alpha} \otimes BU_n(E/k))$-modules by the remark just after Proposition \ref{BG2}. By induction hypothesis, $H({\mathfrak X}_{\alpha} \otimes BU_{n-1}(E/k)$) is freely generated as an $H({\mathfrak X}_{\alpha})$-algebra by $c_1,\dots,c_{n-1}$ which are all uniquely liftable to $H({\mathfrak X}_{\alpha} \otimes BU_n(E/k))$, since ${\mathfrak X}_{\alpha} \otimes BU_n(E/k)$ is the motive of a smooth simplicial scheme and, so, has no cohomology in negative round degrees. Hence, $g^*$ is an epimorphism as it is a ring homomorphism, $h^*$ is trivial and $f^*$ is a monomorphism. Denoting by $c_n$ the element $f^*(1)$ we obtain the result
$$H({\mathfrak X}_{\alpha} \otimes BU_n(E/k))=H({\mathfrak X}_{\alpha})[c_1,{\dots},c_n]$$

For the rest of the induction step we will consider separately two cases.

1) \textit{n even}: for any $m \in \mathbb{N}$ we have the following long exact sequence in motivic cohomology of $H(BU_n(E/k))$-modules
$${\dots} \rightarrow H^{*-1,*'}(\Nm \otimes BU_{n-1}(E/k)) \xrightarrow{h^*} H^{*-2n,*'-n}(\Nm \otimes BU_n(E/k)) \xrightarrow{f^*}$$
$$H^{*,*'}(\Nm \otimes BU_n(E/k)) \xrightarrow{g^*} H^{*,*'}(\Nm \otimes BU_{n-1}(E/k)) \rightarrow {\dots}$$
For $m=0$, by induction hypothesis, $H(BU_{n-1}(E/k))$ is generated as an $H$-algebra by $c_1,{\dots},c_{n-1}$ and $\mu c_l$ for any odd $l<n$. By degree reasons these cohomology classes are all uniquely liftable to $H(BU_n(E/k))$. Therefore, $g^*$ is an epimorphism since it is a ring homomorphism. This assures that, for any $m$, $H(\Nm \otimes BU_{n-1}(E/k))$ is generated as an $H(BU_{n}(E/k))$-module by $\mu_i$ for all $i \leq m$. By degree reasons the $\mu_i$ are all uniquely liftable to $H(\Nm \otimes BU_n(E/k))$. Now $g^*$ happens to be surjective since it is a homomorphism of $H(BU_{n}(E/k))$-modules. Hence, $h^*$ is the $0$ homomorphism and $f^*$ is a monomorphism. Then, denoting by $c_n$ the cohomology class $f^*(1)$ we have, for any $m$, the following morphism of short exact sequences of $H(BU_n(E/k))$-modules
$$
\xymatrixcolsep{1.3pc}\xymatrix{
0 \ar@{->}[r] & H^{*-2n,*'-n}(\Nm \otimes BU_n(E/k)) \ar@{->}[r]^(0.54){\cdot c_n} \ar@{->}[d] & H^{*,*'}(\Nm \otimes BU_n(E/k)) \ar@{->}[r] \ar@{->}[d] & H^{*,*'}(\Nm \otimes BU_{n-1}(E/k)) \ar@{->}[r] \ar@{^{(}->}[d] &0\\
0 \ar@{->}[r] & H^{*-2n,*'-n}({\mathfrak X}_{\alpha} \otimes BU_n(E/k)) \ar@{->}[r]^(0.54){\cdot c_n} & H^{*,*'}({\mathfrak X}_{\alpha} \otimes BU_n(E/k)) \ar@{->}[r] & H^{*,*'}({\mathfrak X}_{\alpha} \otimes BU_{n-1}(E/k)) \ar@{->}[r] & 0
}
$$
By induction on square degree and by a standard four lemma argument, the central vertical morphism is injective. Moreover, by an induction argument on square degree and looking at the previous upper short exact sequence we get that
$$H(\Nm \otimes BU_{n}(E/k))=\bigoplus\limits_{i \in \Z_{\geq 0}}H(\Nm \otimes BU_{n-1}(E/k))\cdot c_n^i=\bigoplus\limits_{i_1,{\dots},i_n \in \Z_{\geq 0}} H(\Nodd)\cdot c_1^{i_1}{\cdots}c_n^{i_n}$$
as an $H(BU_n(E/k))$-submodule of $H({\mathfrak X}_{\alpha} \otimes BU_n(E/k))$.

2) \textit{n odd}: as before for any $m$ we have the following long exact sequence in motivic cohomology of $H(BU_n(E/k))$-modules
$${\dots} \rightarrow H^{*-1,*'}(\Nm \otimes BU_{n-1}(E/k)) \xrightarrow{h^*} H^{*-2n,*'-n}(\Nmp \otimes BU_n(E/k)) \xrightarrow{f^*}$$
$$H^{*,*'}(\Nm \otimes BU_n(E/k)) \xrightarrow{g^*} H^{*,*'}(\Nm \otimes BU_{n-1}(E/k)) \rightarrow {\dots}$$
As in the previous case, for $m=0$ the induction hypothesis implies that $H(BU_{n-1}(E/k))$ is generated as an $H$-algebra by $c_1,{\dots},c_{n-1}$ and $\mu c_l$ for any odd $l<n$. By the same degree reasons they are all uniquely liftable to $H(BU_n(E/k))$. Thus, $g^*$ is an epimorphism since it is a ring homomorphism. This is enough to show that, for any $m$, $H(\Nm \otimes BU_{n-1}(E/k))$ is generated as an $H(BU_{n}(E/k))$-module by $\mu_i$ for all $i \leq m$. Again the $\mu_i$ are uniquely liftable to $H(\Nm \otimes BU_n(E/k))$. It follows that $g^*$ is surjective, $h^*$ is trivial and $f^*$ is injective. Then, denoting by $c_n$ the cohomology class $f^*(1)$ we have, for any $m$, the following morphism of short exact sequences of $H(BU_n(E/k))$-modules
$$
\xymatrixcolsep{1.1pc}\xymatrix{
0 \ar@{->}[r] & H^{*-2n,*'-n}(\Nmp \otimes BU_n(E/k)) \ar@{->}[r]^(0.54){\cdot c_n} \ar@{->}[d] & H^{*,*'}(\Nm \otimes BU_n(E/k)) \ar@{->}[r] \ar@{->}[d] & H^{*,*'}(\Nm \otimes BU_{n-1}(E/k)) \ar@{->}[r] \ar@{^{(}->}[d] &0\\
0 \ar@{->}[r] & H^{*-2n,*'-n}({\mathfrak X}_{\alpha} \otimes BU_n(E/k)) \ar@{->}[r]^(0.54){\cdot c_n} & H^{*,*'}({\mathfrak X}_{\alpha} \otimes BU_n(E/k)) \ar@{->}[r] & H^{*,*'}({\mathfrak X}_{\alpha} \otimes BU_{n-1}(E/k)) \ar@{->}[r] & 0
}
$$
By the very same arguments of the previous case, the central vertical morphism is injective and
$$H(\Nm \otimes BU_{n}(E/k))=\bigoplus\limits_{i \in \Z_{\geq 0}}H(\Nmi \otimes BU_{n-1}(E/k))\cdot c_n^i=\bigoplus\limits_{i_1,{\dots},i_n \in \Z_{\geq 0}} H(\Nodd)\cdot c_1^{i_1}{\cdots}c_n^{i_n}$$
as an $H(BU_n(E/k))$-submodule of $H({\mathfrak X}_{\alpha} \otimes BU_n(E/k))$, which completes the proof. 
\end{proof}

As a corollary of the previous theorem we obtain the description of the motivic cohomology ring of $BU_n(E/k)$ as an $H$-algebra.

\begin{thm}\label{BUn}
For any $n \in \Z_{\geq 0}$ there exist cohomology classes $c_i$ of bidegree $(i)[2i]$ for $1 \leq i \leq n$ and $d_j$ of bidegree $(j)[2j+1]$ for $1 \leq j \: odd \leq n$ such that the motivic cohomology ring of $BU_n(E/k)$ is given by 
$$H(BU_n(E/k))={\frac {H[c_i,d_j]_{1 \leq i \leq n,1 \leq j \: odd \leq n}} {R}}$$
where $R$ is the ideal generated by $\tau d_j+\{\alpha\} c_j$, $Ann(\{\alpha\})\cdot d_j$ and $c_{j'}d_j+c_jd_{j'}$ for any $1 \leq j,j' \: odd \leq n$.
\end{thm}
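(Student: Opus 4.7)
The plan is to bootstrap from Theorem \ref{main} specialised at $m=0$: since $\N^{\otimes 0}=T$, that result gives
$$H(BU_n(E/k)) = \bigoplus_{i_1,\ldots,i_n \ge 0} H(\N^{\otimes s})\cdot c_1^{i_1}\cdots c_n^{i_n}, \qquad s := \sum_{l\ \mathrm{odd}} i_l.$$
Combining this with Proposition \ref{HNa}, $H(BU_n(E/k))$ has an $H$-module basis formed by the elements $\mu_a\cdot c_1^{i_1}\cdots c_n^{i_n}$ with $0\le a\le s$, subject to the inherited relations $\tau\mu_a=\{\alpha\}\mu_{a-1}$ and $Ann(\{\alpha\})\mu_a=0$. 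The aim is to repackage this linear data as the generators-and-relations presentation of the theorem.

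For each odd $j$ with $1 \le j \le n$, I would define $d_j$ to be the element of $H(BU_n(E/k))$ corresponding to $\mu_1\cdot c_j$ in the summand $H(\N)\cdot c_j$ indexed by $(i_l)=(\delta_{l,j})$; by construction it has bidegree $(j)[2j+1]$. The three families of relations in the theorem are then straightforward. The identity $\tau d_j=\{\alpha\}c_j$ translates $\tau\mu_1=\{\alpha\}\mu_0=\{\alpha\}$ from Proposition \ref{HNa}. The identity $Ann(\{\alpha\})\cdot d_j=0$ is immediate because $\mu_1$ already lies in ${\frac {K^M(k)/2}{Ann(\{\alpha\})}}$. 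For $j,j'$ both odd, the products $c_{j'}\cdot d_j$ and $c_j\cdot d_{j'}$ both land in the summand $H(\N^{\otimes 2})\cdot c_jc_{j'}$ and both represent $\mu_1c_jc_{j'}$ (in contrast, $d_jd_{j'}$ would give $\mu_2 c_jc_{j'}$ via $\mu_1\cdot\mu_1=\mu_2$, which one checks using the injection of Proposition \ref{RS}); hence $c_{j'}d_j+c_jd_{j'}=0$ in characteristic $2$. These verifications yield a well-defined $H$-algebra homomorphism
$$\varphi:\ H[c_i,d_j]_{1\le i\le n,\,1\le j\ \mathrm{odd}\ \le n}\,/\,R \longrightarrow H(BU_n(E/k)).$$

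To conclude, I would show $\varphi$ is an isomorphism by matching $H$-module bases on both sides. Any monomial $\prod c_l^{a_l}\prod d_j^{b_j}$ in the source (with $b_l=0$ for even $l$) can be brought to canonical form using $c_{j'}d_j=c_jd_{j'}$: its class modulo $R$ depends only on the totals $I_l:=a_l+b_l$ and on $s':=\sum_{j\ \mathrm{odd}}b_j$. The relations $\tau d_j=\{\alpha\}c_j$ and $Ann(\{\alpha\})d_j=0$ then translate exactly into $\tau\cdot(\text{class with }s')=\{\alpha\}\cdot(\text{class with }s'-1)$ and $Ann(\{\alpha\})\cdot(\text{class with }s'\ge 1)=0$, mirroring Proposition \ref{HNa} term by term. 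Under $\varphi$, the canonical representative with invariants $((I_l),s')$ maps to $\mu_{s'}c_1^{I_1}\cdots c_n^{I_n}$, which is exactly the basis element identified in the first paragraph, so $\varphi$ is an isomorphism. The main obstacle is the bookkeeping showing that $c_{j'}d_j=c_jd_{j'}$ is the only non-trivial commutation relation forced by the ring structure from Theorem \ref{main}, so that this canonicalisation really does yield a bijection between classes in the source and basis elements in $H(BU_n(E/k))$.
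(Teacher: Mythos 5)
Your proposal is correct and follows essentially the same route as the paper: specialise Theorem \ref{main} at $m=0$, set $d_j$ equal to $\mu_1 c_j$ (the paper's $\mu c_j$ under the embedding into $H({\mathfrak X}_{\alpha})[c_1,\dots,c_n]$), read off the relations from Proposition \ref{HNa}, and prove injectivity of the resulting epimorphism summand-by-summand and diagonal-by-diagonal. The only minor remark is that the multiplicative identifications (e.g. $c_{j'}d_j=c_jd_{j'}$ and $\mu_1\cdot\mu_1=\mu_2$) are most cleanly justified via the ring monomorphism $H(BU_n(E/k))\rightarrow H({\mathfrak X}_{\alpha}\otimes BU_n(E/k))=H({\mathfrak X}_{\alpha})[c_1,\dots,c_n]$ provided by Theorem \ref{main} together with Proposition \ref{RS}, which is precisely the mechanism the paper uses.
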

\begin{proof}
By Theorem \ref{main} we have a monomorphism of rings
$$H(BU_n(E/k))=\bigoplus\limits_{i_1,{\dots},i_n \in \Z_{\geq 0}} H(\nodd)\cdot c_1^{i_1}{\cdots}c_n^{i_n} \rightarrow H({\mathfrak X}_{\alpha})[c_1,{\dots},c_n]$$
from which we deduce that $H(BU_n(E/k))$ is generated as an $H$-algebra by the $c_i$ and the $\mu c_j$ for $j$ odd. Let us denote by $d_j$ these elements. Then, the relations among $c_i$ and $d_j$ that generate $R$ follow immediately by Proposition \ref{HNa} and by noticing that $\mu c_j \cdot c_{j'}=c_j \cdot \mu c_{j'}$ for any $1 \leq j,j' \: odd \leq n$. This amounts to say that there is an epimorphism 
$$p:{\frac {H[c_i,d_j]_{1 \leq i \leq n,1 \leq j \: odd \leq n}} {R}} \rightarrow H(BU_n(E/k))$$ 
We can check its injectivity by looking separately at each restriction 
$$p:p^{-1}(H(\nodd) \cdot c_1^{i_1}{\cdots}c_n^{i_n}) \rightarrow H(\nodd) \cdot c_1^{i_1}{\cdots}c_n^{i_n}$$
Notice that $H(\nodd) \cdot c_1^{i_1}{\cdots}c_n^{i_n}$ is generated as a $K^M(k)/2$-module by $\mu^m c_1^{i_1}{\cdots}c_n^{i_n}$ for any $0<m \leq \sum_{l \: odd} i_l$ and $\tau^{m'} c_1^{i_1}{\cdots}c_n^{i_n}$ for any $m' \geq 0$. Moreover, the elements in $p^{-1}(H(\nodd) \cdot c_1^{i_1}{\cdots}c_n^{i_n})$ that map to these generators through $p$ are unique. Then, injectivity follows by looking at the restriction of $p$ on each diagonal of $p^{-1}(H(\nodd) \cdot c_1^{i_1}{\cdots}c_n^{i_n})$ which is an isomorphism to ${\frac {K^M(k)/2} {Ann(\{\alpha\})}}$ on positive diagonals and to $K^M(k)/2$ on the others. 
\end{proof}

\section{Comparison between $BU_n(E/k)$ and $BO(\widetilde{h}_n)$}

Since there exists an obvious homomorphism of groups $U_n(E/k) \rightarrow O(\widetilde{h}_n)$, it is reasonable to compare the classifying spaces $BU_n(E/k)$ and $BO(\widetilde{h}_n)$ and, in particular, the characteristic classes arising from both.

Before proceeding, we highlight that, given a quadratic form $q$, there is the following isomorphism
$$O(q \perp \la b \ra)/O(q) \cong A_{q \perp \la b \ra=b}$$
where by $A_{q \perp \la b \ra=b}$ we mean the affine quadric defined by the equation $q \perp \la b \ra=b$.

For sake of simplicity, we will express by $p_n$ the quadratic form $\lva \alpha \rva \perp (n-1)\HF$ and by $p_{n-{\frac 1 2}}$ the quadratic form $\la -\alpha \ra \perp (n-1)\HF$.

In the following theorem we compute the motivic cohomology ring of $BO(p_n)$.

\begin{thm}\label{Hpn}
For any $n \in \Z_{\geq 0}$ there exist cohomology classes $u_i$ of bidegree $([i/2])[i]$ for $1 \leq i \leq 2n$ and a class $v_{2n+1}$ of bidegree $(n)[2n+1]$ such that the motivic cohomology ring of $BO(p_n)$ is given by 
$$H(BO(p_n))=\frac{H[u_1,{\dots},u_{2n},v_{2n+1}]}{(\tau v_{2n+1}+\{\alpha\} u_{2n},Ann(\{\alpha\})\cdot v_{2n+1})}$$
\end{thm}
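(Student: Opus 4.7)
The argument will parallel that of Theorems \ref{main} and \ref{BUn}, replacing the single unitary fibration by two orthogonal ones. Using the decompositions $p_n = p_{n-1/2} \perp \la 1 \ra$ and $p_{n-1/2} = p_{n-1} \perp \la -1 \ra$ (the latter obtained via Witt cancellation from $p_n = p_{n-1} \perp \HF$), one gets smooth coherent morphisms of simplicial schemes
$$\pi_n : \widehat{B}O(p_{n-1/2}) \rightarrow BO(p_n), \qquad \rho_n : \widehat{B}O(p_{n-1}) \rightarrow BO(p_{n-1/2})$$
with fibers the affine quadrics $A_{p_n = 1}$ and $A_{p_{n-1/2} = -1}$ respectively. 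Propositions \ref{BG1} and \ref{BG2} will apply here just as in the unitary case, since rationally trivial quadratic forms are Nisnevich locally split.

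The first task will be to compute the fiber motives. Combining \cite[Lemma 34]{B} with the identifications $A_{\lva\alpha\rva = 1} = A_{h_1}$ and $A_{\la -\alpha \ra = -1} \cong Spec(E)$, and noting that both affine quadrics carry obvious rational points (so their motives split off a Tate summand), one finds
$$M(A_{p_n = 1}) = T \oplus \N(n)[2n-1], \qquad M(A_{p_{n-1/2} = -1}) = T \oplus \n(n-1)[2n-2].$$
Both reduced fibers are invertible in ${\mathcal DM}^{-}_{eff}(k,\Z/2)$ by Proposition \ref{12345}, so Proposition \ref{Thom1} combined with Proposition \ref{BG2} yields two Gysin-type long exact sequences in motivic cohomology relating $H(BO(p_n))$, $H(BO(p_{n-1/2}))$, and $H(BO(p_{n-1}))$ to mixed tensor products of these spaces with $\N$ and $\n$.

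The core of the proof will then be a simultaneous induction on $n$, modeled exactly on Theorem \ref{main}, computing $H(\N^{\otimes m} \otimes BO(p_n))$ and $H({\mathfrak X}_\alpha \otimes BO(p_n))$ in parallel (and analogously for $p_{n-1/2}$). One would establish
$$H({\mathfrak X}_\alpha \otimes BO(p_n)) = H({\mathfrak X}_\alpha)[u_1, \ldots, u_{2n}],$$
with the two new classes $u_{2n-1}$ and $u_{2n}$ arising as images of $1$ under the edge homomorphisms associated to $\rho_n$ and $\pi_n$ respectively. Descending from this description to $H(BO(p_n))$ exactly as in the proof of Theorem \ref{BUn}, using the $H$-module structure of $H(\N)$ from Proposition \ref{HNa}, would produce the single extra generator $v_{2n+1}$ corresponding to $\mu \cdot u_{2n}$ inside $H({\mathfrak X}_\alpha)[u_1,\ldots,u_{2n}]$. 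The relations $\tau v_{2n+1} + \{\alpha\} u_{2n} = 0$ and $Ann(\{\alpha\}) \cdot v_{2n+1} = 0$ would then be inherited from $\tau \mu = \{\alpha\}$ and $Ann(\{\alpha\}) \mu = 0$ in $H(\N)$.

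The main obstacle will be the interleaved induction controlling $H(\N^{\otimes m} \otimes BO(p_n))$ through the two different fibrations. One must verify, in particular, that the intermediate cohomology $H(BO(p_{n-1/2}))$ is a pure polynomial $H$-algebra on $u_1, \ldots, u_{2n-1}$ (with no extra $v$-class), so that only a single $v_{2n+1}$ survives at step $n$. This requires careful bookkeeping of the boundary maps across both Gysin sequences and a four-lemma-style injectivity argument analogous to the one used in the proof of Theorem \ref{main}.
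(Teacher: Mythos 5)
Your handling of the top step coincides with the paper's: the Gysin sequence of $\widehat{B}O(p_{n-{\frac 1 2}}) \rightarrow BO(p_n)$ with reduced fiber $\N(n)[2n-1]$, run in both its ${\mathfrak X}_{\alpha}$-twisted and $\Nm$-twisted forms exactly as in the odd case of Theorem \ref{main}, followed by the descent of Theorem \ref{BUn} which produces $v_{2n+1}=\mu u_{2n}$ together with the relations $\tau v_{2n+1}+\{\alpha\}u_{2n}$ and $Ann(\{\alpha\})\cdot v_{2n+1}$. The genuine gap is in how you propose to obtain the input for that step. The paper does not compute $H(BO(p_{n-{\frac 1 2}}))$ by a second fibration and an interleaved induction on $n$: it observes that $p_{n-{\frac 1 2}}=\la -\alpha\ra\perp(n-1)\HF$ becomes $q_{2n-1}$ after scaling by $-\alpha^{-1}$, and orthogonal groups are insensitive to scaling, so $O(p_{n-{\frac 1 2}})\cong O_{2n-1}$. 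Hence $H(BO(p_{n-{\frac 1 2}}))=H[u_1,\dots,u_{2n-1}]$ is Theorem \ref{SubtleSW}, and, crucially, the twisted descriptions $H(\Nm\otimes BO(p_{n-{\frac 1 2}}))=H(\Nm)\otimes_H H[u_1,\dots,u_{2n-1}]$ and $H({\mathfrak X}_{\alpha}\otimes BO(p_{n-{\frac 1 2}}))=H({\mathfrak X}_{\alpha})[u_1,\dots,u_{2n-1}]$ come for free from cellularity via \cite[Proposition 3.2.4]{SV}; these are exactly what the liftability and surjectivity arguments in the top Gysin sequence require, and no induction on $n$ occurs in the paper's proof at all.

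Your substitute, namely the fibration $\widehat{B}O(p_{n-1})\rightarrow BO(p_{n-{\frac 1 2}})$ with reduced fiber $\n(n-1)[2n-2]$ plus a simultaneous induction, is precisely where the proposal stops being a proof. That Gysin sequence is self-referential: its third term involves $\n\otimes\Nm\otimes BO(p_{n-{\frac 1 2}})$, so it expresses the (twisted) cohomology of $BO(p_{n-{\frac 1 2}})$ in terms of itself, and unlike the inductive step of Theorem \ref{main} you cannot simply invoke an induction hypothesis for the target. To show that $g^*$ onto $H(BO(p_{n-1}))$ is surjective you must in particular lift $v_{2n-1}$, whose obstruction lies in $H^{1,0}(\n\otimes BO(p_{n-{\frac 1 2}}))$, and you must rule out a surviving $v$-type class in $H(BO(p_{n-{\frac 1 2}}))$; in the paper this kind of analysis (Proposition \ref{pq}, which needs Lemma \ref{HMat} and even a $Sq^1$ computation) is only carried out after all the rings involved are already known. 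Calling this ``careful bookkeeping'' understates it: without either the scaling isomorphism $O(p_{n-{\frac 1 2}})\cong O_{2n-1}$ or an explicit mechanism (say, induction on cohomological degree with verified low-degree vanishing) to break that circularity, your induction does not close. A smaller inaccuracy: $M(A_{p_{n-{\frac 1 2}}=-1})=T\oplus\n(n-1)[2n-2]$ fails for $n=1$, where the quadric is $Spec(E)$ with motive $M_{\alpha}$ and no rational point; only the identification of the reduced motive $\widetilde{M}(A_{p_{n-{\frac 1 2}}=-1})=\n(n-1)[2n-2]$ is needed for Proposition \ref{Thom1}, so that is what should be asserted.
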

\begin{proof}
We start by noticing that
$$O(p_n)/O(p_{n-{\frac 1 2}}) \cong A_{p_n=1}$$
From the fact that $O(p_{n-{\frac 1 2}}) \cong O_{2n-1}$ we obtain by Theorem \ref{SubtleSW} that
$$H(BO(p_{n-{\frac 1 2}}))=H[u_1,\dots,u_{2n-1}]$$
Then, from \cite[Proposition $3.2.4$]{SV} it follows that
$$H(\Nm \otimes BO(p_{n-{\frac 1 2}}))=H(\Nm) \otimes_H H[u_1,\dots,u_{2n-1}]$$ 
and
$$H({\mathfrak X}_{\alpha} \otimes BO(p_{n-{\frac 1 2}}))=H({\mathfrak X}_{\alpha})[u_1,\dots,u_{2n-1}]$$
Now, by recalling that $M(A_{p_n=1})=T \oplus \N(n)[2n-1]$ and $\N \otimes {\mathfrak X}_{\alpha} \cong {\mathfrak X}_{\alpha}$ and using Propositions \ref{Thom1}, \ref{BG1} and \ref{BG2}, we obtain a long exact sequence in motivic cohomology
$${\dots} \rightarrow H^{*-1,*'}({\mathfrak X}_{\alpha} \otimes BO(p_{n-{\frac 1 2}})) \xrightarrow{h^*} H^{*-2n,*'-n}({\mathfrak X}_{\alpha} \otimes BO(p_n)) \xrightarrow{f^*}$$
$$H^{*,*'}({\mathfrak X}_{\alpha} \otimes BO(p_n)) \xrightarrow{g^*} H^{*,*'}({\mathfrak X}_{\alpha} \otimes BO(p_{n-{\frac 1 2}})) \rightarrow {\dots}$$
Hence, by the same arguments of Theorem \ref{main} and denoting by $u_{2n}$ the class $f^*(1)$ we get that
$$H({\mathfrak X}_{\alpha} \otimes BO(p_n))=H({\mathfrak X}_{\alpha})[u_1,\dots,u_{2n}]$$
As in the odd case of Theorem \ref{main}, for any $m$ we get a long exact sequence of $H(BO(p_n))$-modules
$${\dots} \rightarrow H^{*-1,*'}(\Nm \otimes BO(p_{n-{\frac 1 2}})) \xrightarrow{h^*} H^{*-2n,*'-n}(\Nmp \otimes BO(p_n)) \xrightarrow{f^*}$$
$$H^{*,*'}(\Nm \otimes BO(p_n)) \xrightarrow{g^*} H^{*,*'}(\Nm \otimes BO(p_{n-{\frac 1 2}})) \rightarrow {\dots}$$
Hence, by exactly the same arguments of Theorem \ref{main} and denoting by $u_{2n}$ the class $f^*(1)$ we obtain, for any $m$, a morphism of short exact sequences of $H(BO(p_n))$-modules
$$
\xymatrixcolsep{1.1pc}\xymatrix{
	0 \ar@{->}[r] & H^{*-2n,*'-n}(\Nmp \otimes BO(p_n)) \ar@{->}[r]^(0.56){\cdot u_{2n}} \ar@{->}[d] & H^{*,*'}(\Nm \otimes BO(p_n)) \ar@{->}[r] \ar@{->}[d] & H^{*,*'}(\Nm \otimes BO(p_{n-{\frac 1 2}})) \ar@{->}[r] \ar@{^{(}->}[d] &0\\
	0 \ar@{->}[r] & H^{*-2n,*'-n}({\mathfrak X}_{\alpha} \otimes BO(p_n)) \ar@{->}[r]^(0.54){\cdot u_{2n}} & H^{*,*'}({\mathfrak X}_{\alpha} \otimes BO(p_n)) \ar@{->}[r] & H^{*,*'}({\mathfrak X}_{\alpha} \otimes BO(p_{n-{\frac 1 2}})) \ar@{->}[r] & 0
}
$$
From this it follows that
$$H(\Nm \otimes BO(p_n))=\bigoplus\limits_{i \in \Z_{\geq 0}}H(\Nmi \otimes BO(p_{n-{\frac 1 2}}))\cdot u_{2n}^i=\bigoplus\limits_{i_1,{\dots},i_{2n} \in \Z_{\geq 0}} H(\Nmin)\cdot u_1^{i_1}{\cdots}u_{2n}^{i_{2n}}$$
and, setting $m=0$, we obtain
$$H(BO(p_n))=\bigoplus\limits_{i_1,{\dots},i_{2n} \in \Z_{\geq 0}} H(\Ni) \cdot u_1^{i_1}{\cdots}u_{2n}^{i_{2n}}$$
Moreover, we have a monomorphism of $H$-algebras
$$H(BO(p_n)) \rightarrow H({\mathfrak X}_{\alpha} \otimes BO(p_n))$$
from which we deduce, as in Theorem \ref{BUn}, that
$$H(BO(p_n))=\frac{H[u_1,{\dots},u_{2n},v_{2n+1}]}{(\tau v_{2n+1}+\{\alpha\} u_{2n},Ann(\{\alpha\})\cdot v_{2n+1})}$$
where $v_{2n+1}$ is nothing else but the element that maps to $\mu u_{2n}$ under the monomorphism $H(BO(p_n)) \rightarrow H({\mathfrak X}_{\alpha} \otimes BO(p_n))$. 
\end{proof}

At this point, we recall that ${\mathfrak X}_{\alpha} \otimes BO(p_n) \cong BO_{2n} \otimes {\mathfrak X}_{\alpha}$ by \cite[Proposition $2.6.1$]{SV}. Moreover, note that this isomorphism is functorial just by the way it is constructed in the proof of \cite[Proposition $2.6.1$]{SV}. In few words, for any torsor triple $(G,X,H)$ the isomorphism is obtained by considering the bisimplicial scheme $EG \times X \times EH$, then taking the quotient with respect to the left action of $G$ and the righ action of $H$ in different orders. The claimed isomorphism in $H_s(k)$ is 
$$\check{C}(X) \times BH \cong (G\backslash (EG \times X \times EH))/H=G\backslash ((EG \times X \times EH)/H) \cong BG \times \check{C}(X)$$
where $\check{C}(X)$ is the \v{C}ech simplicial scheme of $X$. So, a morphism of torsor triples $(G,X,H) \rightarrow (G',X',H')$ induces a commutative diagram in $H_s(k)$
$$
\xymatrix{
	\check{C}(X) \times BH \ar@{<->}[r] \ar@{->}[d] & BG \times \check{C}(X) \ar@{->}[d]\\
	\check{C}(X') \times BH' \ar@{<->}[r] & BG' \times \check{C}(X')
}
$$
where the horizontal maps are isomorphisms.

\begin{prop}\label{pq}
The isomorphism in motivic cohomology
$$H({\mathfrak X}_{\alpha} \otimes BO(p_n)) \longleftrightarrow H(BO_{2n} \otimes {\mathfrak X}_{\alpha})$$
induced by the isomorphism ${\mathfrak X}_{\alpha} \otimes BO(p_n) \cong BO_{2n} \otimes {\mathfrak X}_{\alpha}$ maps $u_{2i}$ to $u_{2i}$ and $u_{2i-1}$ to $u_{2i-1}+\mu u_{2i-2}$ for any $1 \leq i \leq n$.
\end{prop}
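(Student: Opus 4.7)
The plan is induction on $n$, combining the functoriality of the torsor-triple isomorphism recalled just before the statement with the functoriality of the Thom construction (Proposition \ref{Thom2}). The base case $n=0$ is vacuous; $n=1$ seeds the $\mu$-twist and must be handled directly.

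First, I would set up a comparison of Gysin fibrations. The isomorphism ${\mathfrak X}_{\alpha} \otimes BO(p_n) \cong BO_{2n} \otimes {\mathfrak X}_{\alpha}$ comes from the torsor triple $(O_{2n}, X, O(p_n))$ with $X = \mathrm{Isom}(p_n, q_{2n})$, satisfying $\check{C}(X) \cong {\mathfrak X}_{\alpha}$. Restricting to the stabilizers of the final basis vector on each side yields a morphism of torsor triples $(O_{2n-1}, X', O(p_{n-1/2})) \to (O_{2n}, X, O(p_n))$, and the naturality described at the end of the preceding section gives a commutative square in ${\mathcal H}_s(k)$
\[
\xymatrix{
{\mathfrak X}_{\alpha} \otimes \widehat{B}O(p_{n-1/2}) \ar[r]^{\phi_{2n-1}} \ar[d] & \widehat{B}O_{2n-1} \otimes {\mathfrak X}_{\alpha} \ar[d] \\
{\mathfrak X}_{\alpha} \otimes BO(p_n) \ar[r]^{\phi_{2n}} & BO_{2n} \otimes {\mathfrak X}_{\alpha}
}
\]
whose horizontal arrows are isomorphisms and whose vertical arrows are precisely the Gysin fibrations used in the computations of $H(BO(p_n))$ and $H(BO_{2n})$.

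Next, applying Proposition \ref{Thom2} to this cartesian square produces a morphism of Gysin triangles in ${\mathcal DM}_{eff}^-(k,\Z/2)$. Since $\N \otimes {\mathfrak X}_{\alpha} \cong {\mathfrak X}_{\alpha}$ by Proposition \ref{12345}, both fiber cones become $T(n)[2n]$ after tensoring with ${\mathfrak X}_{\alpha}$, and the induced map identifies the Thom classes, giving $\phi_{2n}^*(u_{2n}) = u_{2n}$. For $i < 2n$, I would invoke uniqueness of lifts: by the Gysin sequence, the kernel of $H({\mathfrak X}_{\alpha} \otimes BO(p_n)) \to H({\mathfrak X}_{\alpha} \otimes \widehat{B}O(p_{n-1/2}))$ is the principal ideal $(u_{2n})$, and the bidegree $(n)[2n]$ of $u_{2n}$ rules out any contribution in bidegree $([i/2])[i]$ with $i < 2n$. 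Hence $\phi_{2n}^*(u_i)$ is determined by $\phi_{2n-1}^*(u_i|_{\widehat{B}O_{2n-1}})$, which the inductive hypothesis is supposed to compute.

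The main obstacle is the analysis of $\phi_{2n-1}$. Although $O(p_{n-1/2}) = O_{2n-1}$ as subgroups of $GL_{2n-1}$ (since $p_{n-1/2} = -\alpha \cdot q_{2n-1}$ as quadratic forms), the restricted bitorsor $X'$ is non-trivial, and it is its interaction with ${\mathfrak X}_{\alpha}$ that encodes the $\mu$-shift. The base case $n = 1$ therefore requires a direct geometric verification: identifying $u_1$ on $BO_2$ with the discriminant invariant of the tautological rank-$2$ form, I would compute its $\phi_2$-pullback in terms of the analogous invariant $u_1^p$ on $BO(p_1) = BO(\lva \alpha \rva)$; the discrepancy, produced by the $\lva \alpha \rva$-divisibility which becomes trivial only over ${\mathfrak X}_{\alpha}$, equals the generator $\mu$ of bidegree $(0)[1]$. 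This base-case twist then propagates through the induction to give the correction $u_{2i-1} \mapsto u_{2i-1} + \mu u_{2i-2}$ in odd indices and the identity on even-index classes.
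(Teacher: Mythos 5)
There is a genuine gap in the induction step. Your reduction is fine for the top class and for $i<2n$ as far as it goes: the kernel of restriction to the intermediate stage is the ideal $(u_{2n})$, so $\phi_{2n}^*(u_i)$ for $i<2n$ is indeed determined by the comparison $\phi_{2n-1}$ at the odd intermediate stage $\bigl({\mathfrak X}_{\alpha}\otimes BO(p_{n-\frac12})\ \leftrightarrow\ BO_{2n-1}\otimes{\mathfrak X}_{\alpha}\bigr)$. But the inductive hypothesis only describes the comparison at stage $n-1$ (i.e.\ $BO(p_{n-1})$ versus $BO_{2n-2}$); it does not compute $\phi_{2n-1}$, and in particular it says nothing about where the new class $u_{2n-1}$ goes. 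This is exactly the hard point, and the phrase ``this base-case twist then propagates through the induction'' hides it. To pin down $\phi_{2n-1}^*(u_{2n-1})$ one needs a second descent, from the odd intermediate stage down to stage $n-1$, on the twisted side: the paper analyses the restriction $H(BO(p_{n-\frac12}))\rightarrow H(BO(p_{n-1}))$ via the Gysin sequence for the affine quadric $A_{\la\alpha\ra\perp(n-1)\HF=1}$, whose reduced motive is $\n(n-1)[2n-2]$ (note: $\n$, not $\N$, so Lemma \ref{HMat} enters). There the element covering $v_{2n-1}$ is a priori only determined up to the ambiguity $u_{2n-1}+\epsilon\, u_1u_{2n-2}$, and ruling out $\epsilon=1$ is a genuinely new argument: one applies $Sq^1$ and compares with $Sq^1(\mu u_{2n-2})=\mu^2u_{2n-2}+\mu u_1u_{2n-2}\neq 0$ in $H(BO_{2n-2}\otimes{\mathfrak X}_{\alpha})$ to get a contradiction. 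Only after $g^*(u_{2n-1})=v_{2n-1}$ is known, and after recalling that $v_{2n-1}$ maps to $\mu u_{2n-2}$, does one conclude that $\phi_{2n-1}$ sends $u_{2n-1}$ to $u_{2n-1}+\mu u_{2n-2}$. Nothing in your proposal supplies this step, so the claimed propagation of the $\mu$-shift to the new odd class at each stage is unjustified.

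Two smaller points. First, your appeal to Proposition \ref{Thom2} for $\phi_{2n}^*(u_{2n})=u_{2n}$ requires an actual cartesian square of smooth simplicial schemes, whereas the comparison isomorphism of \cite[Proposition 2.6.1]{SV} is a zigzag through a bisimplicial quotient; the paper avoids this by characterizing $u_{2n}$ on both sides as the unique nonzero class of bidegree $(n)[2n]$ killed by restriction to the intermediate stage, which only uses commutativity of the comparison square. Second, the base case is only sketched: identifying the subtle class $u_1$ (of bidegree $(0)[1]$ in the Nisnevich setting) with ``the discriminant'' and computing the discrepancy is not carried out; the paper instead deduces the $n=1$ case from the rank-one comparison of \cite[Lemma 3.2.6]{SV} applied to the torsor triple $(O(\la -1\ra),Iso(\la -1\ra\leftrightarrow\la -\alpha\ra),O(\la -\alpha\ra))$, followed by the same uniqueness-of-lift argument. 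The base-case sketch could probably be repaired, but the missing $Sq^1$ analysis at the odd stage is an essential ingredient without which the induction does not close.
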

\begin{proof}
We proceed by induction on $n$. For $n=1$, by applying the argument just before this proposition to the morphism of torsor triples $(O(\la -1 \ra),Iso(\la -1 \ra \leftrightarrow \la -\alpha \ra),O(\la -\alpha \ra)) \rightarrow (O_2,Iso(q_2 \leftrightarrow \lva \alpha \rva),O(\lva \alpha \rva))$, we have the following commutative diagram
$$
\xymatrix{
H({\mathfrak X}_{\alpha} \otimes BO(\lva \alpha \rva)) \ar@{<->}[r] \ar@{->}[d] & H(BO_{2} \otimes {\mathfrak X}_{\alpha}) \ar@{->}[d]\\
H({\mathfrak X}_{\alpha} \otimes BO(\la -\alpha \ra)) \ar@{<->}[r] & H(BO({\la -1 \ra}) \otimes {\mathfrak X}_{\alpha})
}
$$
where the bottom horizontal isomorphism maps $u_1$ to $u_1+\mu$ by \cite[Proposition $2.6.1$ and Lemma $3.2.6$]{SV}. Then, the result follows from the fact that $u_1$ and $u_2$ are uniquely determined both in $H({\mathfrak X}_{\alpha} \otimes BO(\lva \alpha \rva))$ and in $H(BO_{2} \otimes {\mathfrak X}_{\alpha})$ by the fact that $u_1$ restricts to $u_1$ and $u_2$ vanishes respectively in $H({\mathfrak X}_{\alpha} \otimes BO(\la -\alpha \ra))$ and in $H(BO({\la -1 \ra}) \otimes {\mathfrak X}_{\alpha})$.

Now, suppose the statement is true for $n-1$. Then, the chain of morphisms of torsor triples $(O_{2n-2},Iso(q_{2n-2} \leftrightarrow p_{n-1}),O(p_{n-1})) \rightarrow (O(-q_{2n-1}),Iso(-q_{2n-1} \leftrightarrow p_{n-{\frac 1 2}}),O(p_{n-{\frac 1 2}})) \rightarrow (O_{2n},Iso(q_{2n} \leftrightarrow p_n),O(p_n))$ induces the following commutative diagram 
$$
\xymatrix{
H({\mathfrak X}_{\alpha} \otimes BO(p_n)) \ar@{<->}[r] \ar@{->}[d] & H(BO_{2n} \otimes {\mathfrak X}_{\alpha}) \ar@{->}[d]\\
H({\mathfrak X}_{\alpha} \otimes BO(p_{n-{\frac 1 2}})) \ar@{<->}[r] \ar@{->}[d] & H(BO(-q_{2n-1}) \otimes {\mathfrak X}_{\alpha}) \ar@{->}[d]\\
H({\mathfrak X}_{\alpha} \otimes BO(p_{n-1})) \ar@{<->}[r] & H(BO_{2n-2} \otimes {\mathfrak X}_{\alpha})
}
$$
In this case we need to understand first the homomorphism 
$$H(BO(p_{n-{\frac 1 2}})) \rightarrow H(BO(p_{n-1}))$$
In order to do so, we notice that
$$O(p_{n-{\frac 1 2}})/O(p_{n-1}) \cong A_{p_{n-{\frac 1 2}}=-1} \cong A_{\la \alpha \ra \perp (n-1)\HF=1}$$
From $\widetilde{M}(A_{\alpha x^2=1})=\n$ we deduce that 
$$\widetilde{M}(A_{\la \alpha \ra \perp (n-1)\HF=1})=\n(n-1)[2n-2]$$
Therefore, by Proposition \ref{Thom1} we have a long exact sequence in motivic cohomology
$${\dots} \rightarrow H^{*-1,*'}(BO(p_{n-1})) \xrightarrow{h^*} H^{*-2n+1,*'-n+1}(\n \otimes BO(p_{n-{\frac 1 2}})) \xrightarrow{f^*}$$
$$H^{*,*'}(BO(p_{n-{\frac 1 2}})) \xrightarrow{g^*} H^{*,*'}(BO(p_{n-1})) \rightarrow {\dots}$$
At this point, notice that $H(\n \otimes BO(p_{n-{\frac 1 2}}))=H(\n) \otimes_H H[u_1,\dots,u_{2n-1}]$ which implies that $u_i$ and $v_{2n-1}$ are all uniquely liftable to $H(BO(p_{n-{\frac 1 2}}))$ by degree reasons, since $H^{*,*'}(\n \otimes BO(p_{n-{\frac 1 2}}))$ is $0$ for $*'<0$ and for $(*')[*]=(0)[0]$ and $(0)[1]$, and by Lemma \ref{HMat}. Hence, $g^*$ is an epimorphism since it is a ring homomorphism. Moreover, $g^*(u_i)=u_i$ for $i \leq 2n-2$ since the natural restriction $H(BO(p_{n-{\frac 1 2}})) \rightarrow H(BO(p_{n-{\frac 3 2}}))$ factors through $H(BO(p_{n-1}))$ and the classes $u_i$ are uniquely determined, both in $H(BO(p_{n-{\frac 1 2}}))$ and in $H(BO(p_{n-1}))$, by the fact that they restrict to the respective $u_i$ or vanish for $i=2n-2$ in $H(BO(p_{n-{\frac 3 2}}))$. For the same reason, since $v_{2n-1}$ vanishes in $H(BO(p_{n-{\frac 3 2}}))$, the element that covers $v_{2n-1}$ through $g^*$ has the shape $u_{2n-1} + \epsilon u_1 u_{2n-2}$, where $\epsilon$ is $0$ or $1$. Suppose $\epsilon = 1$, then by \cite[Proposition 3.1.12]{SV} we have that $Sq^1(u_{2n-1} + u_1 u_{2n-2})=Sq^1Sq^1u_{2n-2}=0$, so $Sq^1v_{2n-1}=0$ as well. But, $Sq^1v_{2n-1}$ maps to $Sq^1(\mu u_{2n-2})$ in $H({\mathfrak X}_{\alpha} \otimes BO(p_{n-1}))$, which again maps to $Sq^1(\mu u_{2n-2})=\mu^2u_{2n-2}+\mu u_1u_{2n-2} \neq 0$ in $H(BO_{2n-2} \otimes {\mathfrak X}_{\alpha})=H({\mathfrak X}_{\alpha})[u_1,\dots,u_{2n-2}]$, and we get a contradiction. Hence, $\epsilon$ must be $0$ and $g^*(u_{2n-1})=v_{2n-1}$.

Therefore, we have that the isomorphism $H({\mathfrak X}_{\alpha} \otimes BO(p_{n-{\frac 1 2}})) \leftrightarrow H(BO(-q_{2n-1}) \otimes {\mathfrak X}_{\alpha})$ maps $u_{2i}$ to $u_{2i}$ and $u_{2i-1}$ to $u_{2i-1}+\mu u_{2i-2}$ for any $1 \leq i \leq 2n-2$. Moreover, since $H({\mathfrak X}_{\alpha} \otimes BO(p_{n-{\frac 1 2}})) \rightarrow H({\mathfrak X}_{\alpha} \otimes BO(p_{n-1}))$ maps $u_{2n-1}+\mu u_{2n-2}$ to $0$, we have that $H({\mathfrak X}_{\alpha} \otimes BO(p_{n-{\frac 1 2}})) \leftrightarrow H(BO(-q_{2n-1}) \otimes {\mathfrak X}_{\alpha})$ maps $u_{2n-1}$ to $u_{2n-1}+\mu u_{2n-2}$.

Now, the result follows from the fact that the $u_i$ are uniquely determined both in $H({\mathfrak X}_{\alpha} \otimes BO(p_n))$ and in $H(BO_{2n} \otimes {\mathfrak X}_{\alpha})$ by the fact that they restrict to $u_i$ for $i \leq 2n-1$ and vanishes for $i=2n$ respectively in $H({\mathfrak X}_{\alpha} \otimes BO(p_{n-{\frac 1 2}}))$ and in $H(BO(-q_{2n-1}) \otimes {\mathfrak X}_{\alpha})$. 
\end{proof}

From Theorem \ref{Hpn} we get immediately the following result which provides the motivic cohomology ring of $BO(\widetilde{h}_n)$.

\begin{thm}
For any $n \in \Z_{\geq 0}$ there exist cohomology classes $u_i$ of bidegree $([i/2])[i]$ for $1 \leq i \leq 2n$ and a class $v_{2n+1}$ of bidegree $(n)[2n+1]$ only for $n$ odd such that the motivic cohomology ring of $BO(\widetilde{h}_n)$ is given by 
$$H(BO(\widetilde{h}_n))=
\begin{cases}
\frac{H[u_1,{\dots},u_{2n},v_{2n+1}]}{(\tau v_{2n+1}+\{\alpha\} u_{2n},Ann(\{\alpha\})\cdot v_{2n+1})}, & n \: odd \\
H[u_1,{\dots},u_{2n}], & n \: even
\end{cases}
$$
\end{thm}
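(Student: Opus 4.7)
The plan is to reduce the statement, for each parity of $n$, to a previously established computation by identifying $\widetilde{h}_n = \lva \alpha \rva \otimes q_n$ explicitly as a quadratic form.

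First I would determine the Witt class of $\widetilde{h}_n$. Since $q_n = \perp_{i=1}^n \la (-1)^{i-1} \ra$ alternates between $\la 1 \ra$ and $\la -1 \ra$, pairing consecutive summands gives $q_n \cong (n/2)\HF$ for $n$ even and $q_n \cong ((n-1)/2)\HF \perp \la 1 \ra$ for $n$ odd. Combining this with the standard identity $\lva \alpha \rva \otimes \HF \cong 2\HF$ (for instance, $\lva \alpha \rva$ is isotropic after tensoring with any hyperbolic plane), one obtains
$$\widetilde{h}_n \cong \begin{cases} n\HF, & n \text{ even,} \\ (n-1)\HF \perp \lva \alpha \rva = p_n, & n \text{ odd.} \end{cases}$$

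For $n$ odd, the orthogonal group $O(\widetilde{h}_n)$ coincides with $O(p_n)$, so the claim is exactly the content of Theorem \ref{Hpn} (including the classes $u_i$, $v_{2n+1}$ and the relations involving $\tau$ and $\{\alpha\}$). For $n$ even, $\widetilde{h}_n$ is fully hyperbolic, so $O(\widetilde{h}_n) \cong O(q_{2n}) = O_{2n}$, the standard split orthogonal group; in this case Theorem \ref{SubtleSW} supplies $H(BO(\widetilde{h}_n)) = H(BO_{2n}) = H[u_1,\dots,u_{2n}]$, and no class $v_{2n+1}$ appears.

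Thus, after the isomorphism classification of $\widetilde{h}_n$, the result is a two-line corollary of Theorem \ref{Hpn} and Theorem \ref{SubtleSW}; there is no real obstacle, since the substantive Gysin-sequence computation with invertible fibers was already carried out in Theorem \ref{Hpn}, and the even case degenerates to the purely Tate situation of \cite{SV}.
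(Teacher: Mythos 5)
Your proposal is correct and takes essentially the same route as the paper: the paper's proof is precisely the observation that $\widetilde{h}_n$ is split for $n$ even and isomorphic to $p_n$ for $n$ odd, after which the result is immediate from Theorem \ref{Hpn} and from $H(BO_{2n})=H[u_1,\dots,u_{2n}]$ (Theorem \ref{SubtleSW}). Your explicit isometry computation using $\lva \alpha \rva \otimes \HF \cong 2\HF$ merely spells out the detail the paper leaves implicit.
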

\begin{proof}
It follows from the fact that $\widetilde{h}_n$ is split for $n$ even and is isomorphic to $p_n$ for $n$ odd. 
\end{proof}

Once we know both the motivic cohomology of $BO(\widetilde{h}_n)$ and $BU_n(E/k)$, we can relate the subtle classes arising from the orthogonal group and those arising from the unitary group. In particular, we have the following result.

\begin{prop}\label{comp}
For any $n \in \Z_{\geq 0}$ the natural embedding $U_n(E/k) \hookrightarrow O(\widetilde{h}_n)$ induces an epimorphism 
$$H(BO(\widetilde{h}_n)) \rightarrow H(BU_n(E/k))$$ 
sending $u_{2i}$ to $c_i$ for any $1 \leq i \leq n$, $u_{2l+1}$ to $0$ for any $0 \leq l \: even < n$, $u_{2j+1}$ to $d_j$ for any $1 \leq j \: odd < n$ and $v_{2n+1}$ to $d_n$ only for $n$ odd.
\end{prop}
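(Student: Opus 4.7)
The plan is induction on $n$. The base case $n=0$ is immediate. For the inductive step, the commutative square
$$
\xymatrix{
H(BO(\widetilde{h}_n)) \ar@{->}[r]^{\phi_n} \ar@{->}[d] & H(BU_n(E/k)) \ar@{->}[d] \\
H(BO(\widetilde{h}_{n-1})) \ar@{->}[r]^{\phi_{n-1}} & H(BU_{n-1}(E/k))
}
$$
coming from the commuting inclusions $U_{n-1} \hookrightarrow U_n$, $O(\widetilde{h}_{n-1}) \hookrightarrow O(\widetilde{h}_n)$, $U_{n-1} \hookrightarrow O(\widetilde{h}_{n-1})$, $U_n \hookrightarrow O(\widetilde{h}_n)$ determines $\phi_n$ from $\phi_{n-1}$: the kernel of the right vertical map is the ideal $(c_n, d_n)$, supported in square degree $\geq 2n$, so for every subtle class $u_i$ of square degree $\leq 2n - 1$ the image $\phi_n(u_i)$ is uniquely determined by its restriction to $H(BU_{n-1}(E/k))$, itself computable from the induction hypothesis together with the restriction $u_i|_{H(BO(\widetilde{h}_{n-1}))}$.

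For the new generators $u_{2n}$ and (when $n$ is odd) $v_{2n+1}$, whose restrictions to $H(BO(\widetilde{h}_{n-1}))$ vanish, a bidegree count gives $\phi_n(u_{2n}) \in \{0, c_n\}$. To rule out the zero case, I would apply Proposition \ref{Thom2} to the cartesian square
$$
\xymatrix{
\widehat{B}U_{n-1}(E/k) \ar@{->}[r] \ar@{->}[d] & BU_n(E/k) \ar@{->}[d] \\
\widehat{B}H \ar@{->}[r] & BO(\widetilde{h}_n)
}
$$
where $H$ is the stabilizer in $O(\widetilde{h}_n)$ of a fixed vector of norm $1$ (so $H = O_{2n-1}$ for $n$ even and $H = O(p_{n-1/2})$ for $n$ odd); both rows are smooth coherent fibrations with the common fiber $A_{h_n} = A_{\widetilde{h}_n = 1}$, whose reduced motive is invertible by Proposition \ref{MAQ}. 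Proposition \ref{Thom2} identifies the distinguished Gysin generators on the two rows, forcing $\phi_n(u_{2n}) = c_n$. The class $v_{2n+1}$ (when $n$ is odd) is then pinned down via the relation $\tau v_{2n+1} = \{\alpha\} u_{2n}$: applying $\phi_n$ gives $\tau\phi_n(v_{2n+1}) = \{\alpha\} c_n = \tau d_n$, and since $d_n$ is not $\tau$-torsion in its bidegree, $\phi_n(v_{2n+1}) = d_n$. Since the images cover every $H$-algebra generator of $H(BU_n(E/k))$ listed in Theorem \ref{BUn}, $\phi_n$ is surjective.

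The main obstacle is the parity-sensitive tracking of how odd-indexed classes $u_{2l+1}$ restrict from $H(BO(\widetilde{h}_n))$ to $H(BO(\widetilde{h}_{n-1}))$—this is where the asymmetry in the statement (image $0$ for $l$ even versus image $d_l$ for $l$ odd) originates, and the analysis is in the spirit of the argument given in the proof of Proposition \ref{pq}. Once this is settled and the hypotheses of Proposition \ref{Thom2} (smooth coherence and the cartesian square) are verified for the diagram above, the remaining claims follow from the bidegree/uniqueness arguments described above.
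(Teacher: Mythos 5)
Your overall strategy coincides with the paper's: induction on $n$, using injectivity of $H(BU_n(E/k))\rightarrow H(BU_{n-1}(E/k))$ in square degrees $\leq 2n-1$ to transfer the induction hypothesis, and Proposition \ref{Thom2} applied to the cartesian square built from $U_n(E/k)/U_{n-1}(E/k)\cong O(\widetilde{h}_n)/H$ to pin down the new class $u_{2n}\mapsto c_n$. But what you call the ``main obstacle'' and defer with the phrase ``in the spirit of Proposition \ref{pq}'' is precisely the non-trivial content of the inductive step, and it is a genuine gap. For $n$ even, the restriction $H(BO(\widetilde{h}_n))\rightarrow H(BO(\widetilde{h}_{n-1}))$ must be computed through the stabilizer $O_{2n-1}$, and the key point is the map $H(BO_{2n-1})\rightarrow H(BO(\widetilde{h}_{n-1}))$: the paper identifies $O_{2n-1}/O(\widetilde{h}_{n-1})$ with the affine quadric $A_{\alpha^{-1}\widetilde{h}_{n-1}\perp\la 1\ra=1}$, whose reduced motive is $\n(n-1)[2n-2]$, produces the corresponding Gysin sequence (using Lemma \ref{HMat} to lift the generators), and then must rule out that $u_{2n-1}$ covers $v_{2n-1}$ only up to the correction term $u_1u_{2n-2}$; this is done by a $Sq^1$ computation compared against $H(BO_{2n-2}\otimes{\mathfrak X}_{\alpha})$. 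None of this is a routine citation of Proposition \ref{pq}, which concerns the ${\mathfrak X}_{\alpha}$-tensored isomorphism rather than the restriction you need, so your proposal is missing the argument that actually produces the asymmetric pattern ($u_{2l+1}\mapsto 0$ for $l$ even versus $u_{2j+1}\mapsto d_j$ for $j$ odd).

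There is a second, smaller flaw: your determination of $\phi_n(v_{2n+1})$ from $\tau\phi_n(v_{2n+1})=\{\alpha\}c_n=\tau d_n$ requires not merely that $d_n$ is not $\tau$-torsion, but that $H(BU_n(E/k))$ has no nonzero $\tau$-torsion in bidegree $(n)[2n+1]$; by Theorem \ref{main} and Proposition \ref{HNa} this can fail, since elements of the form $a\mu_i\cdot c_1^{i_1}\cdots c_n^{i_n}$ with $i\geq 2$, $\{\alpha\}^2a=0$ and $\{\alpha\}a\neq 0$ are $\tau$-torsion, and such classes occur in that bidegree for $n\geq 3$ over suitable fields. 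The paper avoids this by extracting $v_{2n+1}\mapsto d_n$ directly from the functoriality statement of Proposition \ref{Thom2} (both classes are characterized by mapping to $\mu u_{2n}$, respectively $\mu c_n$, in the ${\mathfrak X}_{\alpha}$-tensored cohomology), at the same time as $u_{2n}\mapsto c_n$; you should do the same rather than argue through $\tau$-multiplication.
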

\begin{proof}
We will proceed by induction. Notice that the induction basis is provided by the isomorphism $U_0 \cong O_0 \cong Spec(k)$.

For $n$ odd we have the following commutative diagrams
$$
\xymatrixrowsep{1.5pc}\xymatrix{
U_{n-1}(E/k) \ar@{->}[r] \ar@{->}[d] & U_n(E/k) \ar@{->}[dd]\\
O_{2n-2} \ar@{->}[d] \\
O(\widetilde{h}_{n-{\frac 1 2}}) \ar@{->}[r]  & O(\widetilde{h}_n) 
}
\hspace{2cm}
\xymatrixrowsep{1.5pc}\xymatrix{
H(BU_{n-1}(E/k))  & H(BU_n(E/k)) \ar@{->}[l] \\
H(BO_{2n-2}) \ar@{->}[u] \\
H(BO(\widetilde{h}_{n-{\frac 1 2}})) \ar@{->}[u]  & H(BO(\widetilde{h}_n)) \ar@{->}[l] \ar@{->}[uu]
}
$$
where by $\widetilde{h}_{n-{\frac 1 2}}$ here we mean the quadratic form $\la -\alpha \ra \perp (n-1)\HF$.

By induction hypothesis we have that $u_{2i}$ goes to $c_i$ for any $1 \leq i \leq n-1$, $u_{2l+1}$ to $0$ for any $0 \leq l \: even < n-1$ and $u_{2j+1}$ to $d_j$ for any $1 \leq j \: odd < n-1$. The class $u_{2n-1}$ goes to $0$ via the map $H(BO(\widetilde{h}_n)) \rightarrow H(BU_{n-1}(E/k))$ since this factors through $H(BO_{2n-2})$. Hence, $u_{2n-1}$ maps to $0$ in $H(BU_n(E/k))$ since the morphism $H(BU_n(E/k)) \rightarrow H(BU_{n-1}(E/k))$ is injective in bidegree $(n-1)[2n-1]$. Moreover, noticing that 
$$U_n(E/k)/U_{n-1}(E/k) \cong O(\widetilde{h}_n)/O(\widetilde{h}_{n-{\frac 1 2}})$$ 
and by Proposition \ref{Thom2}, we obtain that $u_{2n}$ goes to $c_n$ and $v_{2n+1}$ goes to $d_n$.

For $n$ even we have similar commutative diagrams
$$
\xymatrixrowsep{1.5pc}\xymatrix{
U_{n-1}(E/k) \ar@{->}[r] \ar@{->}[d] & U_n(E/k) \ar@{->}[dd]\\
O(\widetilde{h}_{n-1}) \ar@{->}[d] \\
O_{2n-1} \ar@{->}[r]  & O_{2n}
}
\hspace{2cm}
\xymatrixrowsep{1.5pc}\xymatrix{
H(BU_{n-1}(E/k))  & H(BU_n(E/k)) \ar@{->}[l] \\
H(BO(\widetilde{h}_{n-1})) \ar@{->}[u] \\
H(BO_{2n-1}) \ar@{->}[u]  & H(BO_{2n}) \ar@{->}[l] \ar@{->}[uu]
}
$$
In this case we need to study the homomorphism 
$$H(BO_{2n-1}) \rightarrow H(BO(\widetilde{h}_{n-1}))$$
In order to do so, we notice that
$$O_{2n-1}/O(\widetilde{h}_{n-1}) \cong A_{\widetilde{h}_{n-1} \perp \la \alpha \ra=\alpha}\cong A_{\alpha^{-1} \widetilde{h}_{n-1} \perp \la 1 \ra=1}$$
From $\widetilde{M}(A_{x^2=\alpha})=\n$ and since $\alpha^{-1} \widetilde{h}_{n-1} \perp \la 1 \ra$ is isomorphic to $\la \alpha^{-1} \ra \perp (n-1)\HF$, we deduce that 
$$\widetilde{M}(A_{\alpha^{-1} \widetilde{h}_{n-1} \perp \la 1 \ra=1})=\n(n-1)[2n-2]$$
Hence, by Proposition \ref{Thom1} we have a long exact sequence in motivic cohomology
$${\dots} \rightarrow H^{*-1,*'}(BO(\widetilde{h}_{n-1})) \xrightarrow{h^*} H^{*-2n+1,*'-n+1}(\n \otimes BO_{2n-1}) \xrightarrow{f^*}$$ 
$$H^{*,*'}(BO_{2n-1}) \xrightarrow{g^*} H^{*,*'}(BO(\widetilde{h}_{n-1})) \rightarrow {\dots}$$
Then, by repeating exactly the same arguments that appear in Proposition \ref{pq} we get that $g^*(u_i)=u_i$ for $i \leq 2n-2$ and $g^*(u_{2n-1})=v_{2n-1}$.
 
Therefore, by induction hypothesis we have that $u_{2i}$ goes to $c_i$ for any $1 \leq i \leq n-1$, $u_{2l+1}$ to $0$ for any $0 \leq l \: even \leq n-1$ and $u_{2j+1}$ to $d_j$ for any $1 \leq j \: odd \leq n-1$. Moreover, recalling that 
$$U_n(E/k)/U_{n-1}(E/k) \cong O_{2n}/O_{2n-1}$$ 
and by Proposition \ref{Thom2}, we obtain that $u_{2n}$ goes to $c_n$, as we aimed to show. 
\end{proof}

As a corollary of the previous proposition and of Theorem \ref{BUn} we get a description of $H(BU_n(E/k))$ as a quotient of $H(BO(\widetilde{h}_n))$.

\begin{cor}\label{cor}
For any $n \in \Z_{\geq 0}$ there is an isomorphism
$$H(BU_n(E/k)) \cong {\frac {H(BO(\widetilde{h}_n))} {R}}$$
where $R$ is the ideal generated by $u_{4j+1}$, $u_{4i+3}u_{4j+2}+u_{4j+3}u_{4i+2}$, $\tau u_{4j+3}+\{\alpha\} u_{4j+2}$ and $Ann(\{\alpha\}) \cdot u_{4j+3}$ for any $0 \leq i,j \leq [{\frac {n-1} {2}}]$, where $u_{2n+1}$ is substituted by $v_{2n+1}$ for $n$ odd.
\end{cor}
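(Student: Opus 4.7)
The plan is to combine Proposition \ref{comp} with the presentation of $H(BU_n(E/k))$ given by Theorem \ref{BUn}, by checking that the kernel of the epimorphism
$$\phi:H(BO(\widetilde{h}_n))\twoheadrightarrow H(BU_n(E/k))$$
from Proposition \ref{comp} is exactly the ideal $R$. Recall that under $\phi$ the generators $u_{2i}$ map to $c_i$, the classes $u_{2l+1}$ with $l$ even (i.e. $u_{4j+1}$) map to $0$, the classes $u_{2l+1}$ with $l$ odd (i.e. $u_{4j+3}$) map to $d_{2j+1}$, and, when $n$ is odd, $v_{2n+1}$ maps to $d_n$.

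First I would verify that every generator of $R$ lies in $\ker\phi$. The $u_{4j+1}$ vanish tautologically; the elements $u_{4i+3}u_{4j+2}+u_{4j+3}u_{4i+2}$ are sent to $d_{2i+1}c_{2j+1}+d_{2j+1}c_{2i+1}$, which is zero by the relations $c_{j'}d_j+c_jd_{j'}=0$ for odd $j,j'$ from Theorem \ref{BUn}; and the families $\tau u_{4j+3}+\{\alpha\}u_{4j+2}$ and $Ann(\{\alpha\})\cdot u_{4j+3}$ map respectively to $\tau d_{2j+1}+\{\alpha\}c_{2j+1}$ and $Ann(\{\alpha\})\cdot d_{2j+1}$, both of which are trivial in $H(BU_n(E/k))$. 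This yields an induced surjection $\bar{\phi}:H(BO(\widetilde{h}_n))/R\twoheadrightarrow H(BU_n(E/k))$.

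For the reverse map, I would construct $\psi$ by sending $c_i\mapsto u_{2i}$, $d_j\mapsto u_{2j+1}$ for odd $j<n$, and $d_n\mapsto v_{2n+1}$ when $n$ is odd. Well-definedness reduces to checking that each defining relation of $H(BU_n(E/k))$ from Theorem \ref{BUn} is a consequence of $R$ inside $H(BO(\widetilde{h}_n))$: the relations indexed by $j,j'<n$ are literally among the generators of $R$ (with the usual re-indexing $j=2i+1$), while those involving $j=n$ (only in the odd case) pair $d_n$ with $v_{2n+1}$ and are covered either by the corresponding generators of $R$, interpreted under the convention that $u_{2n+1}$ is replaced by $v_{2n+1}$, or by the relations $\tau v_{2n+1}+\{\alpha\}u_{2n}$ and $Ann(\{\alpha\})\cdot v_{2n+1}$ which are already built into $H(BO(\widetilde{h}_n))$ by Theorem \ref{Hpn}. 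Then $\bar{\phi}$ and $\psi$ are mutually inverse.

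No further motivic input is required beyond Proposition \ref{comp} and Theorem \ref{BUn}; the only real work is the bookkeeping with parities and with the notational convention $u_{2n+1}\leftrightarrow v_{2n+1}$ in the odd case, which constitutes the main, essentially combinatorial, obstacle.
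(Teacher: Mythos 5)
Your argument is correct and is exactly the derivation the paper intends: the corollary is stated as an immediate consequence of Proposition \ref{comp} and Theorem \ref{BUn}, and you simply spell out the check that the epimorphism kills $R$ and that the presentation of Theorem \ref{BUn} provides the inverse, with the right bookkeeping of indices and of the convention $u_{2n+1}\leftrightarrow v_{2n+1}$ (the top relations being absorbed by Theorem \ref{Hpn}). Nothing further is needed.
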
 

\section{Applications to Hermitian forms}

Throughout this section we exploit previous results to study subtle Stiefel-Whitney classes of quadratic forms divisible by $\lva \alpha \rva$. The general idea is that $H(BU_n(E/k))$ is closer to the cohomology of the \v{C}ech simplicial scheme of a quadratic form associated to a hermitian form than $H(BO_{2n})$.

From $\cite{SV}$, we know that for every hermitian form $h$ of the quadratic extension $E/k$ there exists a commutative diagram
$$
\xymatrix{
\check C(X_h) \ar@{->}[r] \ar@{->}[d] & BU_n(E/k) \ar@{->}[r] \ar@{->}[d] & BO(\widetilde{h}_n) \ar@{->}[d]\\
Spec(k) \ar@{->}[r]^(0.42){h} & B_{et}U_n(E/k) \ar@{->}[r] & B_{et}O(\widetilde{h}_n)
}
$$
where $\check C(X_h)$ is the \v{C}ech simplicial scheme of the torsor $X_h=Iso(h \leftrightarrow h_n)$. Hence, the computation of the motivic cohomology of $BU_n(E/k)$ provides us with subtle characteristic classes for hermitian forms and relations among them. More precisely, we have the following proposition.

\begin{prop}
For any $n$-dimensional hermitian form $h$, in $H(\check C(X_h))$ the following relations hold for any $1 \leq j,j' \: odd \leq n$:\\
1) $c_{j'}(h)d_j(h)+c_j(h)d_{j'}(h)=0$;\\
2) $\tau d_j(h)+\{\alpha\} c_j(h)=0$;\\
3) $Ann(\{\alpha\})\cdot d_j(h)=0$.
\end{prop}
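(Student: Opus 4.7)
The plan is to obtain the three relations as direct pullbacks of the corresponding universal relations in $H(BU_n(E/k))$ via the classifying morphism $\check C(X_h) \to BU_n(E/k)$ supplied by the diagram recalled at the start of this section. By functoriality of motivic cohomology, this morphism induces a ring homomorphism
$$\varphi_h^* : H(BU_n(E/k)) \longrightarrow H(\check C(X_h)),$$
and the subtle characteristic classes $c_i(h)$ and $d_j(h)$ of $h$ are defined precisely as the images $\varphi_h^*(c_i)$ and $\varphi_h^*(d_j)$ of the universal classes $c_i, d_j$ constructed in Theorem \ref{BUn}.

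The point is then that Theorem \ref{BUn} identifies
$$H(BU_n(E/k)) = \frac{H[c_i, d_j]_{1 \le i \le n,\ 1 \le j \text{ odd} \le n}}{R},$$
where $R$ is generated by exactly the three families of expressions appearing in the statement: $\tau d_j + \{\alpha\} c_j$, the annihilator relations $\mathrm{Ann}(\{\alpha\}) \cdot d_j$, and $c_{j'} d_j + c_j d_{j'}$ for odd $1 \le j, j' \le n$. In particular, each of these polynomials already vanishes in $H(BU_n(E/k))$. Applying the $H$-algebra homomorphism $\varphi_h^*$ to each such universal relation immediately yields the corresponding relation (1), (2), or (3) in $H(\check C(X_h))$.

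There is essentially no obstacle: the entire proof is a one-line invocation of functoriality, because all of the real work has already been carried out in Theorem \ref{BUn}, which is where these polynomial identities among subtle classes are first discovered in the universal setting. The content of the present proposition is therefore conceptual rather than technical: it records the fact that, unlike in the orthogonal case, the Nisnevich classifying space of $U_n(E/k)$ is close enough to $\check C(X_h)$ that non-trivial relations among the subtle characteristic classes already propagate to the \v{C}ech simplicial scheme of the torsor, which is precisely the feature advertised in the introduction to this section.
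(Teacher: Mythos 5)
Your proposal is correct and matches the paper's argument: the paper's proof is the one-line "It follows immediately from Theorem \ref{BUn}," and your write-up simply makes explicit the functoriality step (pulling back the universal relations along $\check C(X_h) \to BU_n(E/k)$) that the paper leaves implicit. Nothing further is needed.
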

\begin{proof}
It follows immediately from Theorem \ref{BUn}. 
\end{proof}

We now move to consider quadratic forms associated to hermitian ones and their subtle Stiefel-Whitney classes.

Recall that two hermitian forms are isomorphic if and only if the corresponding quadratic forms over $k$ are isomorphic. In particular, for even dimensional hermitian forms we have that they split if and only if the respective quadratic forms split. It follows that $\check C(X_h) \cong \check C(X_{\widetilde{h}})$, for even dimensional hermitian forms.

\begin{prop}
For $n$ even, in $H(\check C(X_{\widetilde{h}}))$ the following relations hold for any $0 \leq i,j \leq \frac{n}{2}-1$:\\
1) $u_{4j+1}({\widetilde{h}})=0$;\\
2) $u_{4i+3}({\widetilde{h}})u_{4j+2}({\widetilde{h}})=u_{4j+3}({\widetilde{h}})u_{4i+2}({\widetilde{h}})$;\\
3) $\tau u_{4j+3}({\widetilde{h}})=\{\alpha\}u_{4j+2}({\widetilde{h}})$;\\
4) $Ann(\{\alpha\})\cdot u_{4j+3}({\widetilde{h}})=0$.
\end{prop}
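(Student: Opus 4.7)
The plan is to pull back the relations of Corollary \ref{cor} from $H(BU_n(E/k))$ to $H(\check C(X_{\widetilde h}))$ along the classifying map, exactly in the spirit of the preceding proposition whose proof reduced to Theorem \ref{BUn}.

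First, I would observe that since $n$ is even, $\widetilde h_n = \lva \alpha \rva \otimes q_n$ equals $n\HF$ and is therefore split, so $BO(\widetilde h_n) \cong BO_{2n}$ in $\mathcal H_s(k)$. The classes $u_1,\ldots,u_{2n}$ of Theorem \ref{Hpn} are then the universal subtle Stiefel-Whitney classes living in $H(BO(\widetilde h_n))$, and by definition each $u_i(\widetilde h) \in H(\check C(X_{\widetilde h}))$ is the pullback of $u_i$ along the classifying map $\check C(X_{\widetilde h}) \to BO(\widetilde h_n)$. Note that for $n$ even no class $v_{2n+1}$ enters the relations of Corollary \ref{cor}, so only honest Stiefel-Whitney classes appear.

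Next, I would invoke the remark preceding the statement: for an even-dimensional hermitian form $h$, the isomorphism criterion gives $\check C(X_h) \cong \check C(X_{\widetilde h})$. Combined with the commutative diagram at the beginning of Section 8, this produces a factorisation
$$\check C(X_{\widetilde h}) \cong \check C(X_h) \to BU_n(E/k) \to BO(\widetilde h_n)$$
of the classifying map. Consequently the induced ring homomorphism $H(BO(\widetilde h_n)) \to H(\check C(X_{\widetilde h}))$ factors through $H(BU_n(E/k))$.

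Finally, by Corollary \ref{cor} the kernel of $H(BO(\widetilde h_n)) \to H(BU_n(E/k))$ is the ideal generated, for $0 \leq i,j \leq \lfloor (n-1)/2 \rfloor = n/2 - 1$, by $u_{4j+1}$, $u_{4i+3}u_{4j+2}+u_{4j+3}u_{4i+2}$, $\tau u_{4j+3} + \{\alpha\}u_{4j+2}$ and $Ann(\{\alpha\}) \cdot u_{4j+3}$. Pulling these four families of generators through the factorisation above forces them to vanish in $H(\check C(X_{\widetilde h}))$, yielding precisely the relations (1)--(4). The conceptual obstacle has already been absorbed into Corollary \ref{cor}; the only technical point to verify here is the clean matching of index ranges, which works out because $\lfloor (n-1)/2 \rfloor = n/2-1$ for $n$ even and $4j+3 \leq 2n-1$ stays within the range of Stiefel-Whitney degrees, so no $v$-class appears.
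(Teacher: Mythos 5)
Your argument is correct and is essentially the paper's own: the paper's proof is simply ``it follows immediately from Corollary \ref{cor}'', with the factorisation $\check C(X_{\widetilde h}) \cong \check C(X_h) \to BU_n(E/k) \to BO(\widetilde h_n)$ and the killing of the ideal $R$ being exactly the implicit content you spelled out. Your additional check of the index range and the absence of $v_{2n+1}$ for $n$ even is consistent with the paper.
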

\begin{proof}
It follows immediately from Corollary \ref{cor}. 
\end{proof}

On the other hand, if $q$ is an odd dimensional quadratic form, then $\lva \alpha \rva \otimes q$ is split over a field extension of $k$ if and only if $\lva \alpha \rva$ is split over the same field extension. It follows from this remark that, for odd dimensional hermitian forms, $\check C(X_{\widetilde{h}}) \cong \check C(X_{\alpha})$, where $\check C(X_{\alpha})$ stands for the \v{C}ech simplicial scheme associated to the Pfister form $\lva \alpha \rva$.  

\begin{prop}
For $n$ odd, in $H(\check C(X_{\widetilde{h}}))=H(\check C(X_{\alpha}))$ the following relations hold for any $0 \leq j \leq \frac{n-1}{2}$:\\
1) $u_{4j+1}({\widetilde{h}})=\mu u_{4j}({\widetilde{h}})$;\\
2) $u_{4j-1}({\widetilde{h}})=0$.
\end{prop}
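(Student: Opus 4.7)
The plan is to combine the orthogonal/hermitian comparison of Proposition \ref{pq} with the explicit description of the map $H(BO(\widetilde h_n)) \to H(BU_n(E/k))$ from Proposition \ref{comp}, reducing the claims to a relation in $H(\mathfrak X_\alpha)$ that is killed by $\tau$.

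First I would set the stage: for $n$ odd, $\widetilde h_n = p_n = \lva \alpha \rva \perp (n-1)\HF$, so the cohomology of $BO(\widetilde h_n)$ is the ring described by Theorem \ref{Hpn}. By the discussion immediately preceding the statement, $\check C(X_{\widetilde h}) \cong \check C(X_\alpha)$, hence $H(\check C(X_{\widetilde h})) = H(\mathfrak X_\alpha)$. Lemma \ref{HNat} together with Proposition \ref{HNa} provides the class $\mu$ in this ring with the relation $\tau\mu = \{\alpha\}$, which will be crucial at the end.

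Next I denote by $u^h_i$ the herm classes in $H(BO(p_n))$ (these are the $u_i$ of Theorem \ref{Hpn}) and by $u_i$ the orth classes in $H(BO_{2n})$. Proposition \ref{pq} identifies $u^h_{2i}$ with $u_{2i}$ and $u^h_{2i-1}$ with $u_{2i-1} + \mu u_{2i-2}$ inside $H(\mathfrak X_\alpha \otimes BO(p_n)) \cong H(BO_{2n} \otimes \mathfrak X_\alpha)$. By the functoriality of the torsor-triple construction noted right before Proposition \ref{pq}, this identification descends, under pullback along the natural map $\check C(X_{\widetilde h}) \to BO(\widetilde h_n) = BO(p_n)$ and the corresponding map $\check C(X_{\widetilde h}) \to BO_{2n}$ coming from the splitting of $\widetilde h$ on $X_{\widetilde h}$, to a relation
$$u^h_{2i-1}(\widetilde h) = u_{2i-1}(\widetilde h) + \mu \, u_{2i-2}(\widetilde h)$$
in $H(\check C(X_{\widetilde h}))$, with $u^h_{2i}(\widetilde h) = u_{2i}(\widetilde h)$.

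Then I invoke Proposition \ref{comp}: the map $\check C(X_{\widetilde h}) \to BO(\widetilde h_n)$ factors through $BU_n(E/k)$ via the diagram at the start of this section, so that $u^h_{4j+1}(\widetilde h) = 0$ (the index $l = 2j$ is even) and $u^h_{4j-1}(\widetilde h) = d_{2j-1}(h)$ (the index $2j-1$ is odd and strictly less than $n$). Substituting into the relation above, claim (1) is immediate, namely $u_{4j+1}(\widetilde h) = \mu \, u_{4j}(\widetilde h)$, and for claim (2) I obtain $u_{4j-1}(\widetilde h) = d_{2j-1}(h) + \mu \, c_{2j-1}(h)$.

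To close claim (2) I multiply by $\tau$: using $\tau d_{2j-1}(h) = \{\alpha\} c_{2j-1}(h)$ from Theorem \ref{BUn} and $\tau\mu = \{\alpha\}$ in $H(\mathfrak X_\alpha)$, one gets $\tau \cdot u_{4j-1}(\widetilde h) = 2\{\alpha\} c_{2j-1}(h) = 0$ with $\Z/2$-coefficients. The class $u_{4j-1}(\widetilde h)$ lies in $H^{4j-1,\,2j-1}(\mathfrak X_\alpha)$, which Lemma \ref{HNat} and Proposition \ref{HNa} identify with $\frac{K^M_{2j-1}(k)/2}{\mathrm{Ann}(\{\alpha\})} \cdot \mu^{2j}$; multiplication by $\tau$ on this piece acts as multiplication by $\{\alpha\}$, whose kernel on $K^M_{2j-1}(k)/2$ is by definition $\mathrm{Ann}(\{\alpha\})$ and which is therefore injective on the quotient. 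This forces $u_{4j-1}(\widetilde h) = 0$. I expect the main obstacle to be the functoriality step in the second paragraph, where Proposition \ref{pq} must be transported from the ambient tensor-product setting down to $H(\check C(X_{\widetilde h}))$; once this is granted, everything else is a substitution from Proposition \ref{comp} plus a bidegree count.
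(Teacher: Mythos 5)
Your reduction of both claims to the identity $u_{4j+1}(\widetilde{h})=u^h_{4j+1}(\widetilde{h})+\mu u_{4j}(\widetilde{h})$, resp.\ $u_{4j-1}(\widetilde{h})=d_{2j-1}(h)+\mu c_{2j-1}(h)$, via Proposition \ref{pq} and Proposition \ref{comp} is essentially the route the paper takes (the paper phrases the ``descent'' step as the factorization $H(BO_{2n})\rightarrow H(BO_{2n}\times\check C(X_{\alpha}))\leftrightarrow H(\check C(X_{\alpha})\times BO(\widetilde{h}_n))\rightarrow H(\check C(X_{\alpha})\times BU_n(E/k))\rightarrow H(\check C(X_{\alpha}))$; note in passing that the diagram at the start of the section involves $\check C(X_h)$, which for $n$ odd is \emph{not} $\check C(X_{\widetilde{h}})\cong\check C(X_{\alpha})$ in general --- take $h$ split and $\alpha$ a non-square --- so the factorization really has to go through the tensored spaces rather than through that diagram directly). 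The genuine problem is your last step for claim (2).

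The claimed injectivity of multiplication by $\tau$ is false. On $H^{4j-1,2j-1}(\mathfrak{X}_{\alpha})\cong\frac{K^M_{2j-1}(k)/2}{\mathrm{Ann}(\{\alpha\})}\cdot\mu^{2j}$ (with $j\geq 1$, so $2j\geq 2$) multiplication by $\tau$ lands in $H^{4j-1,2j}(\mathfrak{X}_{\alpha})\cong\frac{K^M_{2j}(k)/2}{\mathrm{Ann}(\{\alpha\})}\cdot\mu^{2j-1}$ and is induced by $x\mapsto\{\alpha\}x$ \emph{between the quotients}. What is true by definition is that $x\mapsto\{\alpha\}x$ is injective from $\frac{K^M(k)/2}{\mathrm{Ann}(\{\alpha\})}$ into $K^M(k)/2$; injectivity into the quotient again fails whenever $\{\alpha\}\{-1\}x=0$ but $\{\alpha\}x\neq 0$. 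Concretely, for $k=\mathbb{C}((s))((t))$, $\alpha=s$, $j=1$: the source is $\mathbb{Z}/2\cdot\mu^{2}\{t\}\neq 0$ while the target vanishes (every class in $K^M_2(k)/2$ annihilates $\{s\}$ since $K^M_3(k)/2=0$), so $\tau$ acts by zero there, and $\tau\cdot u_{4j-1}(\widetilde{h})=0$ gives no information. The repair is the paper's: perform the cancellation \emph{before} restricting to $\check C(X_{\alpha})$. In $H(\mathfrak{X}_{\alpha}\otimes BU_n(E/k))=H(\mathfrak{X}_{\alpha})[c_1,\dots,c_n]$ one has $d_{2j-1}=\mu c_{2j-1}$ on the nose --- this is exactly how $d_j$ is defined in Theorem \ref{BUn}, as the preimage of $\mu c_j$ under the injection of Theorem \ref{main} --- so the image of $u_{4j-1}$ there is $d_{2j-1}+\mu c_{2j-1}=0$, and claim (2) follows by restricting further to $H(\check C(X_{\alpha}))$. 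The weaker relation $\tau d_j=\{\alpha\}c_j$, which is all that survives in $H(BU_n(E/k))$, is not enough for your argument.
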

\begin{proof}
Together with the commutative diagram at the beginning of this section, we have the following one
$$
\xymatrix{
\check C(X_{\alpha}) \ar@{->}[r] \ar@{->}[d] & BO_{2n} \ar@{->}[d]\\
Spec(k) \ar@{->}[r]^{\widetilde{h}} & B_{et}O_{2n}
}
$$
By \cite[Proposition $2.6.1$]{SV}, we know that after tensoring both with $\check C(X_{\alpha})$ they coincide. Therefore, our restriction morphism $H(BO_{2n}) \rightarrow H(\check C(X_{\alpha}))$ factors as 
$$H(BO_{2n}) \rightarrow H(BO_{2n} \times \check C(X_{\alpha})) \leftrightarrow H(\check C(X_{\alpha}) \times BO(\widetilde{h}_n)) \rightarrow H(\check C(X_{\alpha}) \times BU_n(E/k)) \rightarrow H(\check C(X_{\alpha}))$$
which implies the result by Theorem \ref{main}, Proposition \ref{pq} and Proposition \ref{comp}. 
\end{proof}

We now show that the subtle classes arising in the unitary case see the triviality of the torsor of a hermitian form in the same way subtle Stiefel-Whitney classes do for quadratic forms (\cite[Corollary 3.2.32]{SV}).

\begin{prop}
$h \cong h_n$ if and only if $c_{2^r}(h)=0$ for any $r$.
\end{prop}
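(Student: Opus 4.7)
My plan is to deduce the proposition from its orthogonal analog \cite[Corollary 3.2.32]{SV} via the comparison developed in Section 7, together with Jacobson's theorem linking hermitian and quadratic forms.

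For the forward direction, assume $h \cong h_n$. Then the torsor $X_h$ has a $k$-rational point, so $\check C(X_h) \simeq Spec(k)$ in $\mathcal{H}_s(k)$ and the classifying map $\check C(X_h) \to BU_n(E/k)$ becomes (up to homotopy) the basepoint inclusion $BU_0(E/k) \hookrightarrow BU_n(E/k)$. From the inductive construction in Theorem \ref{main}, each $c_i$ vanishes under restriction to $BU_{i-1}(E/k)$, and a fortiori to $BU_0(E/k) = Spec(k)$, so $c_{2^r}(h) = 0$ for every $r$.

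For the converse, I would use the identification $\check C(X_h) \cong \check C(X_{\widetilde{h}})$, which holds because Jacobson's theorem (\cite[Corollary 9.2]{Ka}) forces the isomorphism classes of $h$ and $\widetilde{h}$ to determine each other over any field extension $L/k$, so that the two torsors share all $L$-points and their \v{C}ech simplicial schemes agree in $\mathcal{H}_s(k)$. By Proposition \ref{comp}, the ring homomorphism $H(BO(\widetilde{h}_n)) \to H(BU_n(E/k))$ sends $u_{2i}$ to $c_i$ and $u_{2l+1}$ to $0$ for $l$ even. Restricting to $H(\check C(X_h))$ via naturality of the diagram at the beginning of Section 8 yields $u_1(\widetilde{h}) = 0$ automatically and $u_{2^s}(\widetilde{h}) = c_{2^{s-1}}(h)$ for every $s \geq 1$ with $2^s \leq 2n$. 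Hence the hypothesis $c_{2^r}(h) = 0$ for all admissible $r$ forces $u_{2^s}(\widetilde{h}) = 0$ for every $s$ with $2^s \leq 2n$. Applying \cite[Corollary 3.2.32]{SV} to the $2n$-dimensional form $\widetilde{h}$ then gives $\widetilde{h} \cong \widetilde{h}_n$, and a final appeal to Jacobson's theorem delivers $h \cong h_n$.

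The main obstacle will be applying the orthogonal criterion when $n$ is odd, in which case $\widetilde{h}_n = \lva \alpha \rva \perp (n-1)\HF$ is not the split form $q_{2n}$. One should either transport the SV vanishing statement through the isomorphism $\check C(X_\alpha) \otimes BO(\widetilde{h}_n) \cong \check C(X_\alpha) \otimes BO_{2n}$ of Proposition \ref{pq}, or observe that the inductive proof of \cite[Corollary 3.2.32]{SV} adapts verbatim to arbitrary reference forms via the same template used in Section 6.
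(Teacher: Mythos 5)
For $n$ even your argument is essentially the paper's: there $\widetilde{h}_n\cong q_{2n}$ is split, so $\check C(X_h)\cong \check C(X_{\widetilde h})$, the comparison of Proposition \ref{comp} turns the hypothesis $c_{2^r}(h)=0$ into $u_{2^{s}}(\widetilde h)=0$ for all $s$ (with $u_1(\widetilde h)=0$ for free), and \cite[Corollary 3.2.32]{SV} plus Jacobson's theorem finish. The genuine gap is the odd case, which you flag as an ``obstacle'' but do not close, and neither of your two suggested repairs works as stated. For $n$ odd the reference form $\widetilde h_n=\lva\alpha\rva\perp(n-1)\HF$ is not split, and the objects you conflate come apart: the torsor to which \cite[Corollary 3.2.32]{SV} applies is $Iso(\widetilde h\leftrightarrow q_{2n})$, whose \v{C}ech simplicial scheme is $\check C(X_{\alpha})$ (as noted in Section 8), not $\check C(X_h)=\check C(Iso(\widetilde h\leftrightarrow\widetilde h_n))$; and for $n$ odd $\widetilde h$ is never isomorphic to $q_{2n}$ (splitness of $\lva\alpha\rva\otimes(\text{odd-dimensional form})$ forces splitness of $\lva\alpha\rva$), so the split-form criterion you would transport through Proposition \ref{pq} is vacuous and cannot yield $\widetilde h\cong\widetilde h_n$. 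Your second suggestion, rerunning the SV induction with the non-split reference form, is not ``verbatim'' either: $H(BO(p_n))$ is not polynomial (Theorem \ref{Hpn}), and a splitting criterion for $O(\widetilde h_n)$-torsors in terms of classes in $H(\check C(X_h))$ is exactly a statement that would have to be proved, not quoted.

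The paper sidesteps all of this with a short stabilization argument that your proposal is missing: for $n$ odd, $h\cong h_n$ if and only if the even-dimensional form $h\perp\la -1\ra$ is split (Witt cancellation together with Jacobson), one has $\check C(X_h)\cong\check C(X_{h\perp\la -1\ra})$, and since the $U_{n+1}(E/k)$-torsor of $h\perp\la -1\ra$ is induced from the $U_n(E/k)$-torsor of $h$, the classes satisfy $c_i(h\perp\la -1\ra)=c_i(h)$ for $i\le n$ and $c_{n+1}(h\perp\la -1\ra)=0$; hence the hypothesis $c_{2^r}(h)=0$ for all $r$ is equivalent to the same statement for $h\perp\la -1\ra$, and the already-settled even case applies. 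Incorporating this reduction (or genuinely proving a non-split analogue of \cite[Corollary 3.2.32]{SV}) is what your proof still needs.
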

\begin{proof}
Let us start from the case $n$ even. Then, we have already noticed that $h$ splits if and only if $\widetilde{h}$ splits. This is equivalent to say that $u_{2^{r+1}}(\widetilde{h})$ vanishes in $H(\check C(X_{\widetilde{h}}))$ for any $r$, which is the same of vanishing of $c_{2^r}(h)$ in $H(\check C(X_h))$, since in this case $\check C(X_h) \cong \check C(X_{\widetilde{h}})$ and by Proposition \ref{comp}.

For $n$ odd, we have that $h$ splits if and only if $h \perp \la -1 \ra$ (which is even dimensional) splits. This amounts to say that $c_{2^r}(h \perp \la -1 \ra)=0$ in $H(\check C(X_{h \perp \la -1 \ra}))$ for any $r$, which is equivalent to say that $c_{2^r}(h)=0$ in $H(\check C(X_h))$ for any $r$. 
\end{proof}

We conclude by presenting an expression of the motive of the torsor associated to a hermitian form. Indeed, by the very same arguments of \cite[Propositions 3.1.11 and 3.2.2]{SV} one obtains the description of the motive of the torsor $X_h$ in terms of motives of \v{C}ech simplicial schemes and subtle characteristic classes, where $h$ is any hermitian form.

Before stating the results, let us denote by $\widetilde{c_j}$ a morphism $T \rightarrow \N(j)[2j]$ in ${\mathcal DM}^{-}_{eff}(BU_n(E/k))$ which composed with the only non-zero morphism $\N(j)[2j] \rightarrow T(j)[2j]$ gives $c_j$ for any $j$ odd. It is actually the unique cohomology class in $H(\n \otimes BU_n(E/k))$ that maps to $c_j$ under the homomorphism induced by the only non-zero morphism $T \rightarrow \n$. Then, we have the following two propositions.

\begin{prop}
In ${\mathcal DM}^{-}_{eff}(BU_n(E/k))$ we have
$$M(EU_n(E/k) \rightarrow BU_n(E/k))=\bigotimes\limits_{1 \leq i \: even \leq n}Cone[-1](T \xrightarrow{c_i} T(i)[2i]) \otimes \bigotimes\limits_{1 \leq j \: odd \leq n}Cone[-1](T \xrightarrow{\widetilde{c_j}} \N(j)[2j])$$
\end{prop}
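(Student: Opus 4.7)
The strategy is to proceed by induction on $n$, iterating Proposition \ref{Thom1} along the tower of smooth coherent morphisms
$$EU_n(E/k) = \widehat{B}U_0 \to \widehat{B}U_1 \to \cdots \to \widehat{B}U_{n-1} \to \widehat{B}U_n = BU_n(E/k),$$
where $\widehat{B}U_i := EU_n(E/k)/U_i(E/k)$ (following the notation of Section $4$). The map $\widehat{B}U_{i-1} \to \widehat{B}U_i$ has fiber $U_i(E/k)/U_{i-1}(E/k) \cong A_{h_i}$, whose reduced motive by Proposition \ref{MAQ} is $T(i)[2i-1]$ for $i$ even and $\N(i)[2i-1]$ for $i$ odd, both invertible in ${\mathcal DM}^-_{eff}(k,\Z/2)$. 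The base case $n=0$ is trivial since both sides equal $T$.

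For the inductive step, applying Proposition \ref{Thom1} to $\widehat{B}U_{i-1} \to \widehat{B}U_i$ gives a distinguished triangle in ${\mathcal DM}^-_{eff}(\widehat{B}U_i,\Z/2)$ with cone $\widetilde{M}(A_{h_i})_{\widehat{B}U_i}[1]$. Pushing this forward along $\pi_i:\widehat{B}U_i \to BU_n(E/k)$, and using the projection formula together with $\widetilde{M}(A_{h_i})_{\widehat{B}U_i} = L\pi_i^*\widetilde{M}(A_{h_i})_{BU_n(E/k)}$, I obtain in ${\mathcal DM}^-_{eff}(BU_n(E/k),\Z/2)$ a triangle
$$M(\widehat{B}U_{i-1} \to BU_n(E/k)) \to M(\widehat{B}U_i \to BU_n(E/k)) \xrightarrow{\mathrm{id}\otimes\gamma_i} M(\widehat{B}U_i \to BU_n(E/k)) \otimes \widetilde{M}(A_{h_i})_{BU_n(E/k)}[1]$$
for some morphism $\gamma_i:T \to \widetilde{M}(A_{h_i})_{BU_n(E/k)}[1]$. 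Taking the shifted cone, this becomes
$$M(\widehat{B}U_{i-1} \to BU_n(E/k)) \cong M(\widehat{B}U_i \to BU_n(E/k)) \otimes Cone[-1]\bigl(T \xrightarrow{\gamma_i} \widetilde{M}(A_{h_i})_{BU_n(E/k)}[1]\bigr),$$
and iterating from $i=n$ down to $i=1$, with $\widehat{B}U_n = BU_n(E/k)$ and $\widehat{B}U_0 = EU_n(E/k)$, produces the desired tensor product splitting, partitioned by parity of $i$.

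The principal task is to identify $\gamma_i$: for $i$ even with $c_i \in H^{2i,i}(BU_n(E/k))$, and for $i$ odd with $\widetilde{c_i} \in Hom(T,\N(i)[2i])$. The even case is essentially by construction, because in the proof of Theorem \ref{main} the class $c_i$ was defined as $f^*(1)$, with $f^*$ the connecting map of the very Gysin-type sequence obtained by pushing forward the Thom1-triangle --- and for $i$ even the tensor with ${\mathfrak X}_{\alpha}$ used there is cosmetic, since $\widetilde{M}(A_{h_i})$ does not involve $\N$. The main obstacle is the odd case, and it is handled through the functoriality of the Thom construction (Proposition \ref{Thom2}): tensoring the entire cartesian diagram with the morphism $T \to {\mathfrak X}_{\alpha}$ and invoking part $2$ of Proposition \ref{12345} (${\mathfrak X}_{\alpha}\otimes \N \cong {\mathfrak X}_{\alpha}$) collapses $\gamma_i$ after tensoring with ${\mathfrak X}_{\alpha}$ to exactly the connecting morphism which yields $c_i$ in the proof of Theorem \ref{main}. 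Equivalently, the composition of $\gamma_i$ with the canonical morphism $\N(i)[2i] \to T(i)[2i]$ equals $c_i$, which is precisely the defining property of $\widetilde{c_i}$; hence $\gamma_i = \widetilde{c_i}$, completing the induction.
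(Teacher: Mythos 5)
Your proposed tower argument has the right overall shape (a tensor factor for each step, with connecting morphisms identified with $c_i$, $\widetilde{c_i}$), but its core inductive step is not justified. Proposition \ref{Thom1} cannot be applied to the intermediate morphisms $\widehat{B}U_{i-1}(E/k) \rightarrow \widehat{B}U_i(E/k)$ for $i<n$: hypothesis (1) demands that over the zero simplicial component the morphism be the trivial projection with fibre $A_{h_i}$, whereas the zero component of $\widehat{B}U_i(E/k)=EU_n(E/k)/U_i(E/k)$ is the homogeneous variety $U_n(E/k)/U_i(E/k)$, not a point, and the restriction of your morphism there is the bundle $U_n/U_{i-1} \rightarrow U_n/U_i$ associated with the (generally non-split) $U_i$-torsor $U_n \rightarrow U_n/U_i$. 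This is not a product in general (only for $i=n$, where the base component is $Spec(k)$, does the hypothesis hold), so the claimed identification $Cone(\widehat{B}U_{i-1} \rightarrow \widehat{B}U_i)\cong \widetilde{M}(A_{h_i})_{\widehat{B}U_i}[1]$ is unproved; it would amount to showing that the reduced relative motive of this non-trivial quadric bundle is constant, which is exactly the kind of statement Proposition \ref{Thom1} (via Corollary \ref{SVI}) is designed to guarantee and which does not come for free. A second, related gap: even granting that triangle, the pushed-forward morphism $M(\widehat{B}U_i \rightarrow BU_n) \rightarrow M(\widehat{B}U_i \rightarrow BU_n)\otimes \widetilde{M}(A_{h_i})_{BU_n}[1]$ has the form $\mathrm{id}\otimes\gamma_i$ only if the connecting class over $\widehat{B}U_i$ is pulled back from $BU_n$; you posit such a $\gamma_i$ without argument, and this descent is precisely where the cohomological input (identification $\widehat{B}U_i\cong BU_i$ via Propositions \ref{BG1} and \ref{BG2}, and unique liftability of the classes from $H(BU_i(E/k))$ to $H(BU_n(E/k))$ as in Theorem \ref{main}) must enter.

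The argument the paper intends (that of \cite[Propositions 3.1.11 and 3.2.2]{SV}) avoids both problems by inducting on $n$ with the ambient group changing: at each stage one uses only the fibration $\widehat{B}U_{n-1}(E/k) \rightarrow BU_n(E/k)$, whose base has $Spec(k)$ as zero component, so Proposition \ref{Thom1} genuinely applies and yields $M(\widehat{B}U_{n-1} \rightarrow BU_n)\cong Cone[-1](T \rightarrow \widetilde{M}(A_{h_n})_{BU_n}[1])$ with connecting morphism already defined over $BU_n$ and identified with $c_n$ (for $n$ even) or $\widetilde{c_n}$ (for $n$ odd) as in the proof of Theorem \ref{main}; the remaining factors come from comparing $M(EU_n \rightarrow \widehat{B}U_{n-1})$ with $M(EU_{n-1} \rightarrow BU_{n-1})$ through $\widetilde{B}U_{n-1}$, whose classes for $i<n$ lift uniquely to $BU_n$, and then applying $L\pi_{\#}$ together with the projection formula. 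If you insist on your fixed-ambient-group tower, you must first prove the missing statement that the reduced relative motive of $U_n/U_{i-1} \rightarrow U_n/U_i$ is the constant motive $\widetilde{M}(A_{h_i})$ pulled back from the point (for instance via Nisnevich-local splitness of $U_n \rightarrow U_n/U_i$, which follows from Witt cancellation and henselian lifting, plus a rigidity argument); this is a genuine additional input that your write-up does not supply.
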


\begin{prop}
In ${\mathcal DM}^{-}_{eff}(k)$ we have
$$M(X_h)=\bigotimes\limits_{1 \leq i \: even \leq n}Cone[-1]({\mathfrak X}_h \xrightarrow{c_i(h)} {\mathfrak X}_h(i)[2i]) \otimes \bigotimes\limits_{1 \leq j \: odd \leq n}Cone[-1]({\mathfrak X}_h \xrightarrow{\widetilde{c_j}(h)} \N \otimes {\mathfrak X}_h(j)[2j])$$
\end{prop}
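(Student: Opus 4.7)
The plan is to deduce the formula from the preceding proposition by transporting it along the classifying map of the torsor $X_h$. Since $X_h$ becomes split over itself, the classifying map $Spec(k) \rightarrow B_{et}U_n(E/k)$ lifts through the \v{C}ech simplicial scheme, giving a morphism $f:\check C(X_h) \rightarrow BU_n(E/k)$ in ${\mathcal H}_s(k)$.

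First I would apply the pullback $Lf^*$ to the formula of the previous proposition. Under this operation, the classes $c_i$ and $\widetilde{c_j}$ become $c_i(h)$ and $\widetilde{c_j}(h)$ respectively, while $T_{BU_n(E/k)}$ and $\N_{BU_n(E/k)}$ become $T_{\check C(X_h)}$ and $\N_{\check C(X_h)}$. By base change applied to the cartesian square in which the pullback of the universal torsor $EU_n(E/k) \rightarrow BU_n(E/k)$ along $f$ is the tautological torsor $X_h \times \check C(X_h) \rightarrow \check C(X_h)$, the left-hand side pulls back to $M(X_h \times \check C(X_h) \rightarrow \check C(X_h))$ in ${\mathcal DM}_{eff}^-(\check C(X_h),\Z/2)$.

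Next I would apply $Lp_\#$ along the structural morphism $p:\check C(X_h) \rightarrow Spec(k)$. On the left-hand side, $Lp_\#$ produces $M(X_h \times \check C(X_h))$, which by K\"unneth equals $M(X_h) \otimes {\mathfrak X}_h$ and is isomorphic to $M(X_h)$ because the projection $X_h \times \check C(X_h) \rightarrow X_h$ is an $\mathbb{A}^1$-weak equivalence (the \v{C}ech simplicial scheme becomes contractible after base change to $X_h$). On the right-hand side, $Lp_\#$ sends $T_{\check C(X_h)}$ to ${\mathfrak X}_h$ and $\N_{\check C(X_h)}$ to $\N \otimes {\mathfrak X}_h$, and combined with the projection formula this yields the tensor product of cones appearing in the proposition.

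The main technical point will be the interaction of $Lp_\#$ with tensor products: since $Lp_\#$ is not monoidal in general, the identification requires repeated use of the projection formula $Lp_\#(N \otimes c^*M) \cong Lp_\#(N) \otimes M$ for $M \in {\mathcal DM}_{eff}^-(k,\Z/2)$, together with the idempotency ${\mathfrak X}_h \otimes {\mathfrak X}_h \cong {\mathfrak X}_h$ of the \v{C}ech simplicial scheme. Every cone in the tensor product over $\check C(X_h)$ contributes a factor of ${\mathfrak X}_h$ upon push-forward, and these multiple factors collapse to a single ${\mathfrak X}_h$ by idempotency, which is exactly what produces the clean formula on the right-hand side.
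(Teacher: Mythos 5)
Your proposal is correct and is essentially the proof the paper intends: the paper simply cites the arguments of \cite[Propositions 3.1.11 and 3.2.2]{SV}, which amount to exactly this transport of the universal formula for $M(EU_n(E/k) \rightarrow BU_n(E/k))$ along the lift $\check C(X_h) \rightarrow BU_n(E/k)$ of the classifying map, followed by $Lp_{\#}$, base change for the pulled-back torsor, the projection formula and the idempotency ${\mathfrak X}_h \otimes {\mathfrak X}_h \cong {\mathfrak X}_h$. The one point worth making explicit in your last step is that each cone factor over $\check C(X_h)$ is itself a pullback $c^*$ of the corresponding cone over $k$, via the adjunction identification $Hom_{{\mathcal DM}^-_{eff}(\check C(X_h))}(T,T(i)[2i]) \cong Hom_{{\mathcal DM}^-_{eff}(k)}({\mathfrak X}_h,T(i)[2i])$ (and its $\N$-twisted analogue for $\widetilde{c_j}(h)$), which is precisely what licenses the repeated use of the projection formula you describe.
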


\footnotesize{
}

\noindent {\scshape School of Mathematical Sciences, University of Nottingham}\\
fabio.tanania@gmail.com

\end{document}